\newtheorem{thm}{Theorem}[section]
\newtheorem{lem}[thm]{Lemma}
\newtheorem{prop}[thm]{Proposition}
\theoremstyle{definition}
\newtheorem{defn}[thm]{Definition}
\theoremstyle{remark}
\numberwithin{equation}{section}
\newcommand{\texteq}[1]{\begin{equation}\text{\parbox{0.85\textwidth}{#1}}\end{equation}}
\newcommand{\set}[1]{\left\{#1\right\}}
\DeclareMathOperator{\supp}{supp}
\begin{document}
\date{\today}%

\title{An endpoint estimate of the bilinear paraboloid restriction operator}

\author[J. Lee] {Jungjin Lee}

\address{Department of Mathematical Sciences, School of Natural Science, Ulsan National Institute of Science and Technology, UNIST-gil 50, Ulsan 44919, Republic of Korea}
\email{jungjinlee@unist.ac.kr}

\thanks{This research was supported by Basic Science Research Program through the National Research Foundation of Korea(NRF) funded by the Ministry of Education (NRF-2020R1I1A1A01062209)
}

\subjclass[2010]{42B15, 42B20}%
\keywords{endpoint Fourier restriction estiamtes}%


\begin{abstract}
In Fourier restriction problems, a cone and a paraboloid are model surfaces. The sharp bilinear cone restriction estimate was first shown by Wolff, and later the endpoint was obtained by Tao. For a paraboloid, the sharp $L^2$ bilinear restriction estimate was obtained by Tao, but the endpoint was remained open. In this paper we prove the endpoint $L^2$ bilinear restriction estimate for a paraboloid.
\end{abstract}

\maketitle

\section{Introduction}
\noindent Fix $n \ge 2$, let $\Sigma$ be a hypersurface 
defined by
\(
  \Sigma = \{(\xi, s(\xi)): \xi \in \mathbb R^n\}
\). Then the (adjoint) Fourier restriction operator 
$\mathcal R_\Sigma f$ for the hypersurface $\Sigma$ can be 
defined by
\[
  \mathcal R_\Sigma f(x,t) := \int e^{2\pi i (x \cdot 
  \xi + t s(\xi))} f(\xi) a(\xi)d\xi,
\]
where $a(\xi)$ is a smooth cut-off function.

The (adjoint) restriction estimate 
$\mathds R^*_\Sigma (p,q)$ for $1 \le p, q \le \infty$ 
is of the form
\begin{equation} \label{rest}
  \| \mathcal R_\Sigma f \|_q \le C_{p,q,\Sigma} \| f\|_p,
\end{equation}
and the restriction problem is to determine $(p,q)$ for
which the estimate $\mathds R^*_\Sigma (p,q)$ holds. 
There are two representative model hypersurfaces. One is 
a cone $\Sigma_{cone}=\{ (\xi, |\xi|): \xi \in \mathbb 
R^n\}$, and the other is a paraboloid 
$\Sigma_{parab}=\{ (\xi, -\frac{|\xi|^2}{2}): \xi \in
\mathbb R^n\}$. 
%
For these two surfaces the restriction operators $\mathcal
R_{\Sigma_{cone}}f$ and $\mathcal R_{\Sigma_{parab}}f$ are 
related to other problems such 
as the Bochner-Riesz conjecture, Kakeya conjecture and 
Sogge's local smoothing conjecture, 
see \cites{tvv,l0,tv2,mss,w2,b}. 
Moreover, they are also connected to the wave and 
Schr\"odinger equations because $\mathcal 
R_{\Sigma_{cone}} \hat f$ and $\mathcal
R_{\Sigma_{parab}}\hat f$ are the solutions to the free wave
equation \( u_{tt} - \Delta u =0 \) and the free Schr\"odinger 
equation \( 4\pi i \partial_t u - \Delta u = 0 \), 
respectively, see \cites{tv2,t1,l3}.

The bilinear restriction estimate $\mathds 
R^*_{\Sigma_1,\Sigma_2}(p \times p, q)$, $1 \le p, q \le 
\infty$ is of the form
\begin{equation*}
  \| \mathcal R_{\Sigma_1} f \mathcal R_{\Sigma_2} g\|_q
  \le C_{p,q,\Sigma_1,\Sigma_2}
  \| f\|_p \| g\|_p,
\end{equation*}
where $\Sigma_1, \Sigma_2$ are two compact subsets of 
$\Sigma$ such that the set
of unit normal vectors of $\Sigma_1$ are separated by
a non-zero distance from the set of unit normal vectors of 
$\Sigma_2$. 
This bilinear restriction estimate 
$\mathds R^*_{\Sigma_1,\Sigma_2}(p \times p, q/2)$ 
was used to improve the linear restriction estimate 
$\mathds R^*_\Sigma(p,q)$.
(The restriction estimate have been improved further by Bourgain--Guth \cite{bg}, Guth \cites{g1,g2}, Wang \cite{wa}, Hickman--Rogers \cite{hr}.)
In addition, as the relation between the Stein-Tomas restriction theorem and the Strichartz estimate, the bilinear restriction estimates $\mathds R^*_{\Sigma_1,\Sigma_2}(2 \times 2, q)$ lead to the corresponding bilinear estimates applied to null form estimates for the relevant dispersive equations, see \cites{lv,tv2,t1,fk,km1,km2}.

The $L^2$ bilinear restriction estimate is based on the 
argument of Wolff \cite{w} for a cone. His 
arguments are roughly composed of two steps. 
One is to use induction to avoid some critical case of 
the Kakeya set. The other is to deal with the remaining relaxed 
Kakeya set by utilizing some geometrical observation as follows:
\texteq{ \label{star}
  The union $\Lambda(x_0)$ of all lines passing through a 
  point $x_0$ and of direction normal to $
  \Sigma_{cone}$ becomes a    cone. 
}
We can see that if a line $\ell$ 
passes through $\Lambda(x_0)$, then $\ell \cap 
\Lambda(x_0)$ has at most $O(1)$ points, which is the key
to obtain the sharp bilinear cone restriction. 

However, in a paraboloid the analogous property does not hold. Specifically, the union 
$\Lambda(x_0)$ of all lines passing through 
$x_0$ and of direction normal to $\Sigma_{parab}$ does 
not contained in a hypersurface. The reason is that
while the cone has one vanishing principle curvature, 
the paraboloid has non-vanishing Gaussian curvature. 
Thus, $\ell \cap \Lambda(x_0)$ may have infinitely 
many points. Because of this difference, Wolff's argument  
cannot be directly applied to the paraboloid case.

This difficulty was resolved by Tao \cite{t2} who used a kind of orthogonality due to the non-vanishing curvature. 
Such an orthogonality was first observed in the proof of the 2-dimensional restriction theorem by Fefferman and C\'ordoba. 
By combining Wolff's arguments with the orthogonality Tao obtained the 
sharp bilinear restriction estimate $\mathds 
R^*_{\Sigma_1,\Sigma_2}(2 \times 2, p)$, 
$p>\frac{n+3}{n+1}$ for a paraboloid.

It is natural to ask whether the endpoint bilinear estimate $\mathds 
R^*_{\Sigma_1,\Sigma_2}(2 \times 2, \frac{n+3}{n+1})$ is valid or not. Since the Kakeya example does not work in the endpoint $L^2$ bilinear restriction estimate, we can expect it.  Tao \cite{t1} obtained the endpoint bilinear cone restriction estimate by exploring energy concentrations and the geometric observation \eqref{star}. If one makes an attempt to prove the endpoint bilinear restriction estimate for a paraboloid, it is reasonable, first of all, to apply Tao's arguments, but the geometric observation \eqref{star} does not hold for the paraboloid restriction operator. However, it still seems to have the $L^2$ bilinear paraboloid restriction estimate because the Kakeya example does not work.

In this paper we will prove the endpoint estimate $\mathds R^*_{\Sigma_1,\Sigma_2}(2 \times 2,\frac{n+3}{n+1})$ for a paraboloid. 
To state more explicitly, 
let $\mathit \Sigma = \mathit \Sigma_{parab}$ and
\begin{align*}
  \mathit \Sigma_j &= \{ (\xi,\tau) \in \mathit \Sigma: 1 
  < |\xi| < 2,\
  \angle(\xi,(-1)^{j-1}e_1) < \pi/8\},\qquad j=1,2, 
\end{align*}
where $e_1 \in \mathbb R^n$ is a standard unit
vector. We define the operator $\mathcal U_j$ 
by $\mathcal U_j f = \mathcal R_{\Sigma_j} \hat f$ for $j=1,2$;
\begin{equation} \label{eqn:Uf}
  \mathcal U_jf(x,t) := \int e^{2 \pi i (x \cdot \xi -
  \frac{1}{2}t|\xi|^2)}
  \widehat f(\xi) a_j (\xi) d\xi,
\end{equation}
where  $a_j$ is a smooth function which is equal to $1$ on
\[
  \Xi_j=\{\xi \in \mathbb R^n : (\xi, \tau) \in \mathit 
  \Sigma_j \}
\]
and supported in 
\[
  \tilde \Xi_j = \{\xi: 1/2 \le |\xi| \le 4,\ \angle(\xi, 
  (-1)^{j-1} e_1) \le \pi/4 \}.
\]

\begin{thm} \label{thm:main}
For $\frac{n+3}{n+1} \le p \le 2$, the estimate
  \begin{equation} \label{est:main}
    \| \mathcal U_1f_1 \mathcal U_2 f_2 \|_{L^p(\mathbb R^n \times \mathbb R)} \le C_p
    \|f_1\|_{L^2(\Xi_1)}\|f_2\|_{L^2(\Xi_2)}
  \end{equation}
holds for all $f_1 \in L^2(\Xi_1)$ and $f_2 \in L^2(\Xi_2)$.
\end{thm}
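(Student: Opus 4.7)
By Plancherel together with the transversality of $\mathit\Sigma_1$ and $\mathit\Sigma_2$, the case $p=2$ of \eqref{est:main} reduces to a direct computation: the change of variables $(\xi_1,\xi_2)\mapsto(\xi_1+\xi_2,\tfrac12(|\xi_1|^2+|\xi_2|^2))$ has Jacobian bounded below on $\tilde\Xi_1\times\tilde\Xi_2$. Bilinear interpolation with this $L^2$ endpoint therefore reduces the whole of Theorem~\ref{thm:main} to the single endpoint exponent $p_0:=\frac{n+3}{n+1}$. After Littlewood--Paley localization and parabolic rescaling, the problem becomes a local estimate on balls of radius $R$ in which the goal is to eliminate the $R^{\varepsilon}$ loss present in Tao's proof \cite{t2} of the open case $p>p_0$.

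\textbf{Wave packet analysis and induction on scales.} I would decompose each $\mathcal U_j f_j$ into wave packets adapted to tubes $T_j\subset\mathbb R^{n+1}$ of dimensions $R^{1/2}\times\cdots\times R^{1/2}\times R$, oriented along the normal to $\mathit\Sigma_j$ over the relevant frequency patch. After pigeonholing over dyadic amplitude classes and spatial localizations, the product $\mathcal U_1f_1\cdot\mathcal U_2f_2$ becomes a bilinear sum of wave-packet products $\phi_{T_1}\phi_{T_2}$ essentially supported on the transverse parallelepipeds $T_1\cap T_2$. The plan is to run Wolff's hairbrush-type combinatorics together with the orthogonality coming from non-vanishing curvature used by Tao in \cite{t2}, splitting into a dispersed regime (many directions, controlled by C\'ordoba--Fefferman orthogonality with no $R^{\varepsilon}$ loss) and a concentrated regime (few directions, reduced via a carefully chosen intermediate scale $R^{1-\delta}$ to a smaller-scale instance of the same estimate via the induction hypothesis). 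The endpoint exponent is reached by tracking energy concentration dyadically, in the spirit of Tao's cone endpoint argument \cite{t1}, and absorbing the resulting logarithmic losses via a Lorentz-space refinement of the $L^{p_0}$ norm available at the interpolation endpoint.

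\textbf{Main obstacle.} The principal difficulty, as emphasized in the introduction, is the failure of the geometric observation \eqref{star} for the paraboloid: the union of normal lines through a fixed point is not contained in a hypersurface, and a generic line can meet it in infinitely many points. The tube-incidence bound used by Tao in the cone endpoint \cite{t1} therefore has no direct analogue, and the hard part of the proof is to supply a quantitative substitute for \eqref{star}. A plausible route is to exploit the non-vanishing Gaussian curvature of the paraboloid in order to show that, at each dyadic scale, the normal congruence is close enough to a ruled hypersurface that a second-order analysis yields a polynomial (rather than constant) bound on the incidence count for tubes passing through a fixed small neighborhood. Turning this heuristic into a clean combinatorial lemma, and verifying that the resulting loss is exactly compensated by the orthogonality gain from \cite{t2} so that no $R^{\varepsilon}$ factor survives the induction, is where I expect the main effort of the proof to concentrate.
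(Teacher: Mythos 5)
Your outline reproduces the known ingredients (reduction to $p_0=\frac{n+3}{n+1}$, wave packets at scale $R^{1/2}\times R$, Wolff-type combinatorics plus the C\'ordoba--Fefferman orthogonality from \cite{t2}), but the two steps that would actually produce the endpoint are left as heuristics, and both point in a direction different from what works. First, you propose to pigeonhole over dyadic amplitude classes and spatial localizations and then to absorb the resulting logarithmic losses by ``a Lorentz-space refinement of the $L^{p_0}$ norm.'' Dyadic pigeonholing is precisely what must be avoided at the endpoint: each pigeonhole costs a factor of $\log R$, and you give no mechanism (no bilinear restricted-type or Lorentz interpolation statement, no accounting of how many logarithms arise) by which a Lorentz refinement removes them. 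The argument in the paper goes the opposite way: Tao's sharp estimate is reproved \emph{without} pigeonholing, in the quantitative form of Proposition~\ref{prop:sharp}, $\|\phi_1\phi_2\|_{L^p(Q_R)}\le\big((1+Cc)\overline{\mathcal K}(R/C_0)+c^{-C}\big)E(\phi_1)^{1/2}E(\phi_2)^{1/2}$, where the constant $(1+Cc)$ (not $C$, not $R^{\varepsilon}$) is exactly what allows an induction on the best constant $\mathcal K(R)$ to close.

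Second, your proposed substitute for \eqref{star} --- approximating the normal congruence of the paraboloid by a ruled hypersurface so as to get a polynomial tube-incidence bound --- is not carried out, and it is not the route taken here; no incidence-count replacement for the cone geometry is ever established. Instead, the failure of \eqref{star} is circumvented by a new energy-concentration argument at a single time slice: one introduces the localization operators $P_D$ of Definition~\ref{def:P_D}, uses the dispersive decay of the kernels $K_{j,t}$ away from the conic sets $\Lambda_j$ (Lemma~\ref{lem:ker_est}, Lemmas~\ref{lem:ptPD} and~\ref{lem:Epd}), and exploits the transversality of $\Lambda_1$ and $\Lambda_2$ to force the concentration radius to satisfy $r\ge C_0^{-C}R$ (Proposition~\ref{prop:energy concent}, giving the crucial gain $1-C_0^{-C}$); Propositions~\ref{prop:core} and~\ref{prop:scale} then convert concentration into the bound $(1+Cc)\overline{\mathcal K}(R)+2^{CC_0}$, and choosing $c=2^{-C_0}$ closes the bootstrap $\overline{\mathcal K}(R)\le 2^{CC_0}$. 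Without a proof of your combinatorial lemma and without a loss-free (constant $1+Cc$) version of the $p>p_0$ argument, your proposal establishes only the already known open range and leaves the endpoint unproved.
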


To obtain the endpoint we basically follow the arguments in \cite{t1}. We first reprove the sharp bilinear restriction estimate for a paraboloid without dyadic pigeonholing. Next we use an energy concentration argument.
But, as mentioned above, we cannot directly apply Tao's endpoint argument to the paraboloid problem because of the lack of \eqref{star}.
To get around this we will devise a new energy concentration argument where the dispersiveness of $\mathcal U_j f$ and the transversality between $\Sigma_1$ and $\Sigma_2$ play a crucial role instead of the geometric observation \eqref{star}.

The Fourier restriction operator can be generalized
to some oscillatory integral operator. The cone 
restriction operator is generalized to the oscillatory 
integrals satisfying the cinematic curvature condition, and 
similarly the paraboloid one  
generalized to the oscillatory integrals with 
the Carleson--Sjölin condition, see \cites{s,st,l2, l1,v}. 
It was shown by S. Lee \cite{l2} that these
two classes of oscillatory integral operators also satisfy 
the estimate \eqref{est:main} for $p > \frac{n+3}{n+1}$, 
provided a suitable transversality condition is assumed, see
\cites{l2,l1,ljj,v}. For the oscillatory integrals with the cinematic curvature
condition, the endpoint bilinear estimate 
was obtained by the author \cite{ljj}. It is likely 
that the oscillatory integral operators with the Carleson--Sjölin condition have the endpoint bilinear estimate, too.


\medskip
\noindent \underline{\textsc{Notation}}. 
Let $N >1$ be a large integer depending only on the dimension $n$, which is used as a large exponent of the error terms. Let $C_0$ be an integer much larger than $N$. 

We use $C$ to denote various large numbers which vary each time. It may depend on $N$ but not depend on $C_0$.
The notation $A \lesssim B$ or $A=O(B)$ implies 
$A \le CB$. If $A \lesssim B$ and $B \lesssim A$ we write $A \sim B$. 


When $\phi(x,t)$ is a space-time function, let 
$\phi(t)$ denote the spatial function $\phi(t)(x) = \phi(x,t)$.
The hat $~\widehat{}~$ notation is used for the Fourier transform
\[
  \widehat f(\xi) = \int_{\mathbb R^n} e^{-2\pi i 
  x\cdot\xi}   f(x) dx.
\]

Let a spacetime cube $Q(x_Q,t_Q;R_Q)$ be an $(n+1)$-dimensional cube in $\mathbb R^{n+1}$ of side-length $R_Q$ 
and centered at $(x_Q,t_Q) \in \mathbb R^n \times \mathbb 
R$ with sides parallel to the axes. Let $D(x_D,t_D;r_D)$ denote an $n$-dimensional disc at time $t_D$ of the form
\[
  D(x_D,t_D;r_D) = \{ (x,t_D) \in \mathbb R^n \times 
  \mathbb R: |x-x_D| \le r_D \}.
\]
For any compact subset $\pi \subset \mathbb R^n$ we define 
a conic set $\Lambda_{\pi}(x_0,t_0)$ with vertex 
$(x_0,t_0) \in \mathbb R^n \times \mathbb R$ by
\begin{equation} \label{def:Cone}
  \Lambda_{\pi}(x_0,t_0) = \set{(x,t)
  \in \mathbb R^n \times \mathbb R  :
  x=x_0 + (t-t_0)w,\ w \in \pi,\ t \in
  \mathbb R},
\end{equation}
and let $\Lambda_{\pi}(x_0,t_0;r)$ be the
$r$-neighborhood of $\Lambda_{\pi}(x_0,t_0)$. 
Briefly we write 
\begin{equation} \label{eqn:Cone2}
  \Lambda_j(x_0,t_0) = \Lambda_{\tilde\Xi_j}(x_0,t_0)
\end{equation} 
for $j=1,2$, and
\[
  \Lambda_{\cup}(x_0,t_0) = \Lambda_1(x_0,t_0) \cup \Lambda_2(x_0,t_0).
\]
If $Q=Q(x_Q,t_Q;r_Q)$ and $c>0$ the $cQ$ is defined by $Q(x_Q,t_Q;cr_Q)$. 
Similarly, $cD$ is defined.

Let $\eta$ be a nonnegative Schwartz function on 
$\mathbb R^n$ with $\int \eta = 1$ and whose Fourier 
transform is supported on the unit disc. 
By the Poisson summation formula we may have
\begin{equation} \label{etaP}
  \sum_{k \in \mathbb Z^n} \eta(x-k) = 1.
\end{equation}
We define $\eta_r$ for $r>0$ by
\begin{equation} \label{fourier_bump}
\eta_r(x) = r^{-n}\eta(x/r).
\end{equation}

\section{Proof of Theorem \ref{thm:main}} \label{sec:EC}
\noindent In this section we state some
propositions and using them we prove Theorem \ref{thm:main}. 
The proof of propositions are given in next sections.

We denote by
\[
  \phi_j(x,t) = \mathcal U_j f_j(x,t),
\]
and define the \textit{energy} $E(\phi_j)$ by
\begin{equation} \label{def:energy}
  E(\phi_j) = \| \phi_j(t) \|_2^2
\end{equation}
where $t \in \mathbb R$ is arbitrary. It makes 
sense by Plancherel's theorem and 
\begin{equation} \label{FForm}
  \widehat{\phi_j(t)}(\xi) =  e^{-\pi i t|\xi|^2 } 
  \hat f_j(\xi) a_j(\xi).
\end{equation}
Using these notations we rewrite Theorem \ref{thm:main} as follows.
\begin{thm} \label{thm:thm2}
For $\frac{n+3}{n+1} \le p \le 2$, the estimate
\begin{equation} \label{thm2}
  \|\phi_1 \phi_2\|_p \le C_p
  E(\phi_1)^{1/2}E(\phi_2)^{1/2}
\end{equation}
holds for all $\phi_1$ and $\phi_2$ whose Fourier transforms are supported in $\mathit \Sigma_1$ and $\mathit \Sigma_2$ respectively.
\end{thm}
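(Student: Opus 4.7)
The plan is to reduce Theorem~\ref{thm:thm2} to the endpoint $p_* = (n+3)/(n+1)$, since the intermediate range $p_* < p \le 2$ follows by interpolating the endpoint bound with the elementary $L^2 \times L^2 \to L^2$ bilinear estimate at $p = 2$ (available from Fourier-side convolution and the transversality of $\mathit\Sigma_1$ and $\mathit\Sigma_2$). At the endpoint I would set up an induction on a scale parameter $R$, proving a localized statement of the form
\[
  \| \phi_1 \phi_2 \|_{L^{p_*}(Q(x_Q,t_Q;R))} \le A(R)\, E(\phi_1)^{1/2} E(\phi_2)^{1/2},
\]
with $A(R)$ controlled uniformly in $R$. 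The induction is organized around an energy concentration/dispersion dichotomy modeled on Tao's endpoint cone argument in~\cite{t1}, but with the conic geometry~\eqref{star} replaced by paraboloid-adapted ingredients.

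The first subgoal is to reprove, without dyadic pigeonholing on the wave packet energies, the sharp open bilinear estimate of Tao~\cite{t2}, formulated as a clean local bound at scale $R$ with an explicit dependence on the effective energy of each $\phi_j$ on $Q$. The point of avoiding pigeonholing is to eliminate the logarithmic loss that would otherwise prevent the induction from closing at the critical exponent. The second subgoal is an energy concentration proposition: either a definite fraction of the bilinear mass $\|\phi_1\phi_2\|_{L^{p_*}(Q)}$ comes from a subregion of $Q$ on which the effective energy of each $\phi_j$ is reduced by a large multiplicative factor---in which case I would rescale and invoke the induction hypothesis at a shorter scale to absorb that contribution---or the bilinear mass is spread out and must be estimated directly on $Q$ by the first proposition.

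The hard part, and the source of the new idea, is the dispersed case. For the cone, Tao exploits the geometric observation~\eqref{star}: the union $\Lambda(x_0)$ of normal rays at a point lies on a hypersurface, so any line meets it in $O(1)$ points, which controls the overlap of incoming wave packets. For the paraboloid this fails outright, since $\Lambda(x_0)$ is not contained in any hypersurface and a line can meet it infinitely often. To replace this combinatorial input I would exploit two features specific to the Schr\"odinger setting: the dispersive decay $\|\mathcal U_j f(\cdot,t)\|_\infty \lesssim |t|^{-n/2}\|f\|_1$, and the angular transversality between $\tilde\Xi_1$ and $\tilde\Xi_2$. Transversality forces tubes belonging to $\phi_1$ and $\phi_2$ passing through a common region to separate in space as $t$ evolves, and the dispersive bound converts this separation into a time-averaged $L^{p_*}$ decay that plays the role, in integrated form, that the $O(1)$-intersection property played for the cone. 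Making this quantitative and scale-uniform---so that the induction can be iterated without accumulating loss at $p = p_*$---is the heart of the argument and would be formulated as the central new proposition announced later in this section.
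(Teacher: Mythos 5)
Your outline reproduces the architecture that the paper itself announces: interpolation with the classical $p=2$ bound to reduce to $p=\frac{n+3}{n+1}$, a localized constant on cubes of side $R$ closed by a bootstrap, a pigeonhole-free reproof of Tao's sharp bilinear estimate so that only $(1+Cc)$-type losses occur (Proposition~\ref{prop:sharp}), and an energy-concentration dichotomy in the spirit of the endpoint cone argument with \eqref{star} replaced by ``dispersiveness plus transversality.'' But at the one place where the theorem is actually new, your proposal stops: the replacement for \eqref{star} is not argued, it is only named (``would be formulated as the central new proposition''). Since everything else in your plan was already available from \cite{t1} and \cite{t2}, this deferral is a genuine gap rather than a routine omission: without that proposition there is no proof of the endpoint case, hence no proof of Theorem~\ref{thm:thm2}.

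Moreover, the mechanism you gesture at is not the one that makes the argument work, and it is aimed at the wrong half of the dichotomy. In the paper the ``spread-out'' case requires no new geometry at all --- it is exactly where the refined, pigeonhole-free version of Tao's $2003$ argument is applied. The new input enters in the analysis of \emph{concentration}: one shows (Proposition~\ref{prop:energy concent}) that non-concentration can be arranged at some time $t_e$ already at radius $r\ge C_0^{-C}R$, because the kernel $K_{j,t}$ decays rapidly off the cone of group velocities $\Lambda_{j,t}$ (Lemma~\ref{lem:ker_est}), so energy concentrated on a disc propagates, up to tails controlled by the localization operators $P_D$, inside $\Lambda_j$ of that disc, and the transversality of $\Lambda_1$ and $\Lambda_2$ traps the concentration discs of all times in a ball of radius $C_0^{C}r_e$, forcing $r_e\gtrsim C_0^{-C}R$; then, for concentration radius $r\ge C_0^{C}A_*R$, one compares $E(P_{D_Q}\phi_j)$ directly with the concentration quantity via the discs $\mathcal G_{j,t_e}(D_Q)$ (Lemma~\ref{lem:Epd}, Proposition~\ref{prop:core}), with Proposition~\ref{prop:scale} bridging the intermediate radii in $O(1)$ steps. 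Nowhere is the $|t|^{-n/2}$ dispersive bound $\|\mathcal U_jf(\cdot,t)\|_\infty\lesssim|t|^{-n/2}\|f\|_1$ used, and your proposed conversion of tube separation into ``time-averaged $L^{p}$ decay'' is neither quantified nor obviously compatible with the endpoint bookkeeping. Likewise, your concentrated-case scheme ``energy reduced on a subregion, rescale and induct at a shorter scale'' is precisely the kind of step that accumulates unquantified losses at $p=\frac{n+3}{n+1}$; the paper instead extracts a fixed gain $(1-C_0^{-C})$ at the \emph{same} scale from non-concentration and beats the $(1+Cc)^{O(1)}$ losses by choosing $c=2^{-C_0}$, which is what allows the rearrangement $\overline{\mathcal K}(R)\le(1-C_0^{-C})(1+Cc)\overline{\mathcal K}(R)+2^{CC_0}$ to close. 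You would need to supply an argument at this level of precision for your dispersive substitute before the proposal could be considered a proof.
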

The estimate \eqref{thm2} for $p=2$ is well known. Thus, by interpolation it suffices to prove the theorem for \[
p :=\frac{n+3}{n+1}.\]  
\begin{defn} \label{defn:hypoth}
  For any $R \ge C_0$ we define a constant $\mathcal K(R)$ to be the best 
  constant for which the estimate
  \begin{equation} \label{hypEst}
    \|\phi_1 \phi_2\|_{L^p(Q_R)} \le \mathcal K(R)
    E(\phi_1)^{1/2}E(\phi_2)^{1/2}
  \end{equation}
holds for all spacetime cubes $Q_R$ of sidelength $R$ and 
all $\phi_1$, $\phi_2$ of which Fourier transforms are supported in $\mathit \Sigma_1$ and $\mathit \Sigma_2$ respectively and satisfy 
\begin{equation} \label{mar_cond}
  \mathrm{marg}(\phi_1), \mathrm{marg}(\phi_2)
  \ge 1/100-R^{-1/N},
\end{equation}
where the \textit{margin} $\mathrm{marg}(\phi_j)$ is defined by
\[
  \mathrm{marg}(\phi_j) := \mathrm{dist}(\supp(\widehat 
  \phi_j), \mathit \Sigma \setminus \mathit\Sigma_j).
\] 

\end{defn}
Note that the margin condition can be removed by partitioning both $x$-space and $\xi$-space and some Lorentz transforms, see \cite{t1}. 
By the above definition it suffices to show
\begin{equation} \label{goal}
  \mathcal K(R) \le 2^{CC_0}.
\end{equation} 
We may assume that
\begin{equation} \label{eqn:energy assum}
  E(\phi_1)=E(\phi_2)=1.
\end{equation}
By some trivial estimates it follows 
\begin{equation} \label{trivial}
  \|\phi_1 \phi_2\|_{L^p(Q_R)} \lesssim R^C.
\end{equation}
Thus we see that
\begin{equation} \label{triK}
\mathcal K(R) \lesssim R^C.
\end{equation}
By this estimate we may assume $R \ge 2^{C_0}$.
Let \[
\overline{\mathcal K}(R) := \sup_{2^{C_0} \le R' \le R} \mathcal K(R'). \]
\begin{prop} \label{prop:sharp}
Let $R \ge 2^{C_0}$ and $0 < c \le 2^{-C_0}$. 
Suppose that $\phi_1$, $\phi_2$ are Fourier supported in $\mathit \Sigma_1$ and $\mathit \Sigma_2$ respectively and satisfy the margin condition
\begin{equation} \label{Bmargin}
\mathrm{marg}(\phi_j) \ge 1/100 - 2R^{-1/N}, \qquad j=1,2.
\end{equation}
Then,
\begin{equation} \label{recBil}
\| \phi_1 \phi_2 \|_{L^p(Q_R)} 
\le \Big( (1+Cc) \overline{\mathcal K}(R/C_0) + c^{-C} \Big) E(\phi_1)^{1/2}E(\phi_2)^{1/2},
\end{equation}
for all cubes $Q_R$ of sidelength $R$.
\end{prop}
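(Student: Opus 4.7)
The plan is to adapt Tao's sharp bilinear paraboloid restriction argument \cite{t2} to the recursive form of \eqref{recBil}, tracking constants carefully to bypass the logarithmic loss of dyadic pigeonholing. The three essential ingredients are a wave packet decomposition at scale $R^{1/2}$, the induction hypothesis at scale $R/C_0$, and the transversality between $\Sigma_1$ and $\Sigma_2$.

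First, decompose each $\phi_j$ as $\phi_j = \sum_{T \in \mathcal T_j} a_T \psi_T$, where $\psi_T$ is a Schwartz bump essentially supported on a tube $T$ of dimensions $R^{1/2} \times \cdots \times R^{1/2} \times R$ oriented normal to $\Sigma_j$ at the central frequency of $\psi_T$, with $\sum_T |a_T|^2 \sim E(\phi_j)$. Next, partition $Q_R$ into subcubes $q$ of sidelength $R/C_0$, and for each $q$ split $\phi_j = \phi_j^q + \phi_j^{q,\mathrm{tail}}$, where $\phi_j^q$ gathers only those wave packets whose tubes substantially intersect a $c$-dilate of $q$. The tail has rapid decay off its tubes, and its contribution to $\|\phi_1 \phi_2\|_{L^p(q)}$ is at most $c^{-C} E(\phi_1)^{1/2} E(\phi_2)^{1/2}$ after Cauchy--Schwarz, producing the $c^{-C}$ term in \eqref{recBil}. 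The margin condition propagates to $\phi_j^q$ within the permitted slack, so the induction hypothesis applies on $q$:
\[
  \|\phi_1^q \phi_2^q\|_{L^p(q)} \le \overline{\mathcal K}(R/C_0) E(\phi_1^q)^{1/2} E(\phi_2^q)^{1/2}.
\]

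Aggregating via the identity $\|F\|_{L^p(Q_R)}^p = \sum_q \|F\|_{L^p(q)}^p$, the task reduces to bounding
\[
  \sum_q E(\phi_1^q)^{p/2} E(\phi_2^q)^{p/2} \le (1+Cc)^p\, E(\phi_1)^{p/2} E(\phi_2)^{p/2}.
\]
This is where the main obstacle lies. A single tube of length $R$ crosses up to $O(C_0)$ subcubes, so a purely linear multiplicity count would lose a factor of $C_0$ (or worse after the exponent $p/2$). The crucial gain must come from transversality: two tubes from $\mathcal T_1$ and $\mathcal T_2$ intersect in a region of diameter $\sim R^{1/2}$, and therefore contribute significantly to only $O(1)$ subcubes simultaneously. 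Unlike for the cone, where \eqref{star} affords an immediate combinatorial bound, the paraboloid case requires replacing \eqref{star} with a bilinear Kakeya-type estimate combined with $L^2$-orthogonality on time slices in the spirit of Fefferman--C\'ordoba. The parameter $c$ then regulates the trade-off between the sharpness of this bilinear count for the main term and the size of the tail, yielding the form stated in \eqref{recBil}.
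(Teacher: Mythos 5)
Your outline follows the general shape of an induction on scales, but the step on which everything hinges is exactly the one your decomposition cannot deliver. If $\phi_j^q$ collects every wave packet whose tube meets (a dilate of) the subcube $q$, then the inequality you reduce to,
\[
  \sum_q E(\phi_1^q)^{p/2} E(\phi_2^q)^{p/2} \le (1+Cc)^p\, E(\phi_1)^{p/2} E(\phi_2)^{p/2},
\]
is false already for a single pair of transversal wave packets: their $R^{1/2}$-intersection region generically straddles several subcubes of side $R/C_0$, each of which then carries the \emph{full} energy of both packets, so the left side is a fixed multiple ($>1$) of the right side, not $(1+Cc)^p$. Transversality tells you the number of such cubes is $O(1)$, but $O(1)$ is fatal here: as the paper stresses repeatedly, any constant larger than $1+Cc$ in front of $\overline{\mathcal K}(R/C_0)$ destroys the endpoint bootstrap. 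Nor can a bilinear Kakeya bound or time-slice orthogonality rescue this, because the quantity at stake is a Bessel-type energy inequality for the pieces $\phi_j^q$, and no pointwise/overlap information about pairs of tubes reduces the energy multiplicity of a packet that has been assigned, whole, to every cube it crosses. The classical fix — pigeonhole each tube to the single cube where it interacts most — reintroduces the logarithmic loss you set out to avoid.

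What the paper actually does is different in the decisive step. Each tube $T_j$ is distributed \emph{fractionally} among the subcubes $\Delta$ with weights $m_{\Delta,T_j}/m_{T_j}$ (built from the $L^2$ mass of the \emph{other} wave along $T_j$ inside $\Delta$), which sum to $1$ over $\Delta$; this yields $\Phi_j^{(\Delta)}$ and $[\Phi_j]$, and Lemma \ref{tube_sum} then gives $\sum_\Delta E(\Phi_j^{(\Delta)}) \le (1+Cc)^2 E(\phi_j)$ with no multiplicity loss at all, after which $\overline{\mathcal K}(R/C_0)$ on each $\Delta$ plus Cauchy--Schwarz closes the main term. The price is the replacement error $\phi_j - [\Phi_j]$, and this is where the ingredients you invoked actually live: Proposition \ref{prop:tao} shows $\|(\phi_1-[\Phi_1])\phi_2\|_{L^p(X(Q))} \lesssim c^{-C}$ by interpolating an $L^1$ bound with an $L^2$ bound that uses the four-frequency resonance condition \eqref{geo condition} (Lemma \ref{lem:vanish_inn}), the resulting sphere $\sigma(v_1,v_2)$ geometry (the Fefferman--C\'ordoba/Tao orthogonality), and the conic energy estimate of Lemma \ref{englem}, which is where transversality enters; the outer $(1+Cc)$ comes from the averaging Lemma \ref{averagelem} and the shrunken set $X(Q)$. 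So your proposal is missing the relative-weight decomposition and the accompanying error analysis, and the inequality you substitute for them does not hold.
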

\noindent We will prove this proposition in Section \ref{sec:nonPP}. It is obtained by refining the proof of \cite{t2}.
Technically the constant $(1+Cc)$ is important to obtain the endpoint. 
Note that the estimate \eqref{recBil} implies 
\(
{\mathcal K}(R) \le (1+Cc)  \overline{\mathcal K}(R/C_0) + c^{-C}.
\)
By iterating this estimate it follows $ \overline{\mathcal K}(R) \le C_\varepsilon R^\varepsilon$ for all $\varepsilon>0$. 

\smallskip

For other propositions we define an energy concentration. We first introduce several constants relevant to the conic sets $\Lambda_1(0)$ and $\Lambda_2(0)$ defined as \eqref{def:Cone}. 
\begin{itemize}
\item
 Let $A_w$ be the maximum angle  between two lines $L_j$ and $L_j'$ passing through the origin and contained in $\Lambda_j(0)$ for $j=1,2$. 
\item Let $A_d$ be the minimum angle between two lines $L_1 \subset \Lambda_1(0)$ and $L_2 \subset \Lambda_2(0)$ passing through the origin.
\item Define the constant $A_*$ by
\begin{equation} \label{A*}
A_* := 4A_wA_d^{-1} + C.
\end{equation}
\end{itemize}
\begin{defn} \label{def:EC}
Let $\phi_1,\phi_2$ be Fourier supported in $\mathit \Sigma_1$ and $\mathit \Sigma_2$ respectively.
For $0 \le \varepsilon < 1$, $r>0$ and $t \in \mathbb R$, 
let $\mathfrak D_{t}^{\varepsilon}(\phi_1,\phi_2)$ be the collection of discs $D=D(x_D,t_D;r_D)$ in $\mathbb R^n \times \{t\}$ with $r_D \ge C_0$ such that 
\begin{equation} \label{DetCen}
\| \phi_1\|_{L^2(D(x_D,t_D;\frac{C_0}{800A_*}))}  \|\phi_2\|_{L^2(D(x_D,t_D;\frac{C_0}{800A_*}))}  \ge \varepsilon  E(\phi_1)^{1/2} E(\phi_1)^{1/2}.
\end{equation}
We define the \textit{energy concentration} $E^{\varepsilon}_{r,\mathring r,t}(\phi_1,\phi_2)$ of $\phi_1$ and $\phi_1$ at time $t$ by
\[
E^{\varepsilon}_{r,\mathring r,t}(\phi_1,\phi_2) = \max\Big( \frac{1}{2}
E(\phi_1)^{1/2} E(\phi_1)^{1/2},  \sup_{\substack{D \in \mathfrak D_{t}^{\varepsilon}(\phi_1,\phi_2)\\: r_D = r}} \sup_{\substack{D_1, D_2 \subset \mathcal N(D)\\:r_{D_1}=r_{D_2}=\mathring r}}\|\phi_1\|_{L^2(D_1)}  \|\phi_2\|_{L^2(D_2)} \Big)
\]
where $r_D$ denotes the radius of $D$ and $\mathcal N(D)$ denotes the $A_*$-dilated disc of $D$. (See Figure \ref{fig:Ecct}.)
\end{defn}
The condition \eqref{DetCen} is a technical thing to handle errors. Since $\phi_1,\phi_2$ are compactly Fourier supported, they have Schwartz tails, and by using the Paley--Wiener theorem we can see that for any proper disc $D$, the $\|\phi_j\|_{L^2(D)}$ is nonzero for $j=1,2$.   

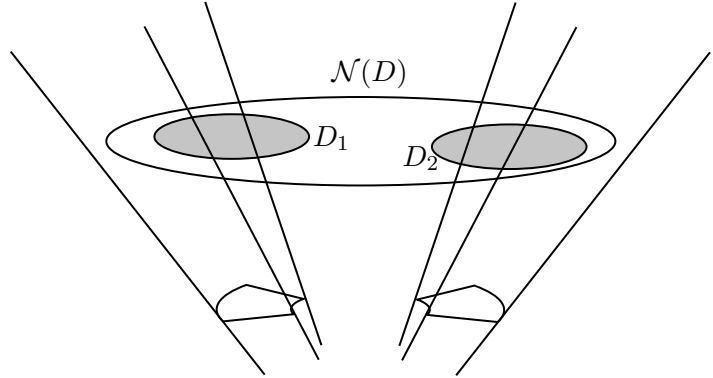
\begin{figure}[htbp]
\begin{center}
\tikzset{every picture/.style={line width=0.75pt}} 
\begin{tikzpicture}[x=0.75pt,y=0.75pt,yscale=-1,xscale=1]

\draw   (123.67,354.08) .. controls (123.67,341.79) and (180.53,331.83) .. (250.67,331.83) .. controls (320.81,331.83) and (377.67,341.79) .. (377.67,354.08) .. controls (377.67,366.37) and (320.81,376.33) .. (250.67,376.33) .. controls (180.53,376.33) and (123.67,366.37) .. (123.67,354.08) -- cycle ;
\draw  [fill={rgb, 255:red, 155; green, 155; blue, 155 }  ,fill opacity=0.58 ] (147.66,351.72) .. controls (147.66,345.52) and (164.94,340.5) .. (186.24,340.5) .. controls (207.55,340.5) and (224.82,345.52) .. (224.82,351.72) .. controls (224.82,357.92) and (207.55,362.94) .. (186.24,362.94) .. controls (164.94,362.94) and (147.66,357.92) .. (147.66,351.72) -- cycle ;
\draw  [fill={rgb, 255:red, 155; green, 155; blue, 155 }  ,fill opacity=0.58 ] (286.11,356.84) .. controls (286.11,350.64) and (303.38,345.62) .. (324.69,345.62) .. controls (346,345.62) and (363.27,350.64) .. (363.27,356.84) .. controls (363.27,363.04) and (346,368.06) .. (324.69,368.06) .. controls (303.38,368.06) and (286.11,363.04) .. (286.11,356.84) -- cycle ;
\draw   (181.96,444.7) .. controls (174.91,438.37) and (179.59,431.41) .. (193.57,426.36) -- (222.06,433.4) .. controls (216.19,435.53) and (214.22,438.44) .. (217.18,441.1) -- cycle ;
\draw    (76,309) -- (202.67,471.5) ;
\draw    (142.67,296.33) -- (229.67,464) ;
\draw    (173,284) -- (231,456.67) ;
\draw   (318.92,445.03) .. controls (325.97,438.7) and (321.29,431.75) .. (307.31,426.69) -- (278.83,433.74) .. controls (284.69,435.86) and (286.66,438.78) .. (283.7,441.44) -- cycle ;
\draw    (424.88,309.33) -- (298.22,471.83) ;
\draw    (358.22,296.67) -- (271.22,464.33) ;
\draw    (327.88,284.33) -- (269.88,457) ;
\draw (234.67,311.23) node [anchor=north west][inner sep=0.75pt]    {$\mathcal{N}( D)$};
\draw (225.33,344.9) node [anchor=north west][inner sep=0.75pt]    {$D_{1}$};
\draw (270,355) node [anchor=north west][inner sep=0.75pt]    {$D_{2}$};
\end{tikzpicture}
\caption{Energy concentration}
\label{fig:Ecct}
\end{center}
\end{figure}

\begin{defn} \label{defn:hypoth2}
Let $R \ge 2^{C_0}$, $r>0$ and $0 \le \varepsilon <1$. We define $\mathcal K_{\varepsilon}(R,r,\mathring r)$ to be the best constant for which the estimate 
\begin{equation} \label{eqn:hypoth2}
\|\phi_1\phi_2\|_{L^p(Q_R)}
\le \mathcal K_{\varepsilon}(R,r, \mathring r) (E(\phi_1)^{1/2} E(\phi_1)^{1/2})^{1/p} E^{\varepsilon}_{r, \mathring r, t_e}(\phi_1, \phi_2)^{1/p'}
\end{equation}
holds for all spacetime cube $Q_R$ of sidelength $R$, all $t_e \in \mathbb R$ and  all $\phi_1, \phi_2$ whose Fourier transforms are supported in $\mathit \Sigma_1$ and $\mathit \Sigma_2$ respectively and satisfy the margin condition \eqref{mar_cond}. 
\end{defn}
%

From the definitions of $\mathcal K(R)$ and $\mathcal K_{\varepsilon}(R,r,\mathring r)$ it immediately follows
\begin{align*} 
\mathcal K(R) &\le  \mathcal K_{\varepsilon}(R,r,\mathring r), \\
\mathcal K_{\varepsilon}(R,r,\mathring r) &\le 2^{1/p'} \mathcal K(R).
\end{align*}
By the dispersive property of $\phi_1,\phi_2$ the above estimates are further improved under certain circumstances. 
\begin{prop} \label{prop:energy concent}
For any $R \ge 2^{C_0}$ and $0 < \varepsilon \ll 1$,
  \begin{equation} \label{imener}
    \mathcal K(R) \le (1-C_0^{-C}) \sup_{\substack{r \ge C_0^{-C}R,  \\ r/100< \mathring r \le (2A_wA_d^{-1}+C_0^{-C})r}}
    \mathcal K_{\varepsilon}(R, r, \mathring r ).
  \end{equation}
\end{prop}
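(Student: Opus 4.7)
The plan is to reduce the proposition to an energy-concentration bound. By the definition \eqref{eqn:hypoth2} of $\mathcal K_\varepsilon$, it suffices to show that for every $\phi_1,\phi_2$ with $E(\phi_j)=1$ satisfying the margin condition \eqref{mar_cond} and every spacetime cube $Q_R$, one can find $r\ge C_0^{-C}R$, $\mathring r\in (r/100,\,(2A_wA_d^{-1}+C_0^{-C})r]$, and $t_e\in\mathbb R$ for which
\begin{equation} \label{goal-ener}
E^{\varepsilon}_{r,\mathring r,t_e}(\phi_1,\phi_2)\ \le\ (1-C_0^{-C})^{p'}.
\end{equation}
Once \eqref{goal-ener} is established, plugging it into \eqref{eqn:hypoth2} and using the normalization $E(\phi_j)=1$ yields $\mathcal K(R)\le (1-C_0^{-C})\mathcal K_{\varepsilon}(R,r,\mathring r)$, which gives the supremum bound in the statement.

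To verify \eqref{goal-ener} I would take $r=C_0^{-C}R$ and $\mathring r$ just above $r/100$, and choose a reference time $t_e$ separated from $t_{Q_R}$ by a distance of order $R$. The decisive property is the dispersive spreading of wave packets under the Schr\"odinger-type propagator $\mathcal U_j$: because $\tilde\Xi_j$ spans a velocity/frequency region of size $\sim 1$, wave packets of $\phi_j$ which overlap at one time decouple spatially by distance $\sim R$ after time of order $R$. The transversality of $\tilde\Xi_1$ and $\tilde\Xi_2$, quantified by $A_d$, then forces any wave packets of $\phi_1$ and $\phi_2$ that jointly concentrate inside $\mathcal N(D)$ at time $t_e$ to move along cones whose axes differ by an angle at least $A_d$, and hence to separate at rate $\sim A_d$. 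The analysis then splits into two cases. (i) If the family $\mathfrak D^{\varepsilon}_{t_e}$ is empty for the chosen $t_e$, then $E^{\varepsilon}_{r,\mathring r,t_e}=\tfrac12$ by definition and \eqref{goal-ener} is automatic since $(1/2)^{1/p'}<1-C_0^{-C}$. (ii) Otherwise, since the joint-concentration window on a fixed $A_*r$-region has length only $\sim A_*r/A_d\ll R$ (using $r=C_0^{-C}R$), one can slide $t_e$ inside the interval $[t_{Q_R}-R,t_{Q_R}+R]$ to a time at which the concentrated centers of $\phi_1$ and $\phi_2$ no longer lie in a common $A_*r$-disc. At such a time one gets $\|\phi_1\|_{L^2(D_1)}\|\phi_2\|_{L^2(D_2)}\le 1-C_0^{-C}$ uniformly over admissible sub-discs $D_1,D_2\subset\mathcal N(D)$, giving \eqref{goal-ener}.

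The main obstacle is to make the heuristic wave-packet separation argument rigorous for arbitrary superpositions: $\phi_j$ need not be a single coherent packet, so I expect to perform a wave-packet decomposition of $\phi_j$ at the parabolic scale $R^{1/2}$ and pigeonhole a dominant sub-family, then couple this with an orthogonality bound coming from the curvature of $\Sigma_j$ (in the spirit of \cite{t2}) to control the cross terms. The refinement to a factor $(1-C_0^{-C})$, rather than a mere absolute constant less than one, requires a careful accounting of the Schwartz tails built into the technical cutoff \eqref{DetCen}; the calibration $A_*=4A_wA_d^{-1}+C$ from \eqref{A*} is chosen precisely so that the admissible ratio $\mathring r/r\in(1/100,\,2A_wA_d^{-1}+C_0^{-C}]$ matches the geometric width within which the joint concentration of $\phi_1$ and $\phi_2$ can persist.
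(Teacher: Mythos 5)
Your reduction of the proposition to finding one admissible triple $(r,\mathring r,t_e)$ with $E^{\varepsilon}_{r,\mathring r,t_e}(\phi_1,\phi_2)\le 1-C_0^{-C}$ is exactly the paper's first step (Proposition \ref{lem:energy}), and your geometric intuition -- transversality makes joint concentration transient -- is the right mechanism. But the proof has a genuine gap precisely at the step you flag as "the main obstacle", and the tools you propose to close it are the wrong ones. What is actually needed is a quantitative propagation/confinement statement: if at some time essentially \emph{all} (i.e.\ all but $O(C_0^{-C})$) of the $L^2$ mass of $\phi_j$ lies in a disc $D$, then at every other time the mass is confined, up to $O(r^{-N})$ tails, to the conic neighborhood $\Lambda_j(z_D;Cr)$. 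The paper obtains this not from a wave packet decomposition but from the localization operator $P_D$ of Section 6 and the kernel estimates behind it (Lemma \ref{lem:ker_est}, the pointwise bound \eqref{pointPD}, the energy bounds \eqref{PD3}--\eqref{PD4}, Lemma \ref{lem:Epd}); no curvature/orthogonality input in the spirit of Tao's sharp bilinear argument enters here at all -- that machinery is used for Proposition \ref{prop:sharp}, not for this proposition. Moreover, your proposed substitute (decompose at scale $R^{1/2}$, "pigeonhole a dominant sub-family", control cross terms by orthogonality) cannot work as stated: the definition of $E^{\varepsilon}$ forces you to track all but a tiny fraction $\delta=C_0^{-C}$ of the energy, and pigeonholing a dominant subfamily discards a constant (or logarithmic) fraction of the mass, which destroys exactly the bookkeeping you need. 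A second, related imprecision: it is not true that the two concentrated pieces "separate at rate $\sim A_d$" because their \emph{axes} differ by $A_d$; each $\phi_j$ can spread or refocus anywhere inside a cone of aperture $A_w$, and $A_w$ may well exceed $A_d$. What saves the argument is that the \emph{entire} cones $\Lambda_1(z_e)$ and $\Lambda_2(z_e)$ are separated by angle $A_d$, so their time slices are disjoint discs separating at rate $\sim A_d$ even though each disc grows at rate $\sim A_w$; the argument must therefore be phrased as confinement to cones, not as motion of centers.

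It is also worth noting that your parameter strategy differs from the paper's: you fix $r=C_0^{-C}R$, $\mathring r$ slightly above $r/100$, and slide $t_e$ out of a short bad window, whereas the paper defines, for each $t$, the minimal joint-concentration radius $r^{\delta}(t)$ (and $\mathring r^{\delta}(t)$), chooses $t_e$ nearly maximizing $r^{\delta}(t)$ over $I_Q$, shows $E^{\varepsilon}_{r^{\delta}(t_e),\mathring r^{\delta}(t_e),t_e}=1-\delta$, and then proves $r^{\delta}(t_e)\ge C_0^{-C}R$ by showing every disc $D_t^{\delta}$ must meet both $\Lambda_{1,e}$ and $\Lambda_{2,e}$. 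Your fixed-scale variant could in principle be made rigorous (the same two-cone confinement bounds the set of joint-concentration times at scale $A_*r$ by an interval of length $O(C_0^{C}r/A_d)\ll R$), but only by importing exactly the $P_D$-based confinement lemmas above; as written, the decisive quantitative step is asserted, not proved, and the sketch of how you would prove it points away from the argument that works.
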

This proposition will be proven in Section \ref{conPP}. In the above estimate, the constant $(1-C_0^{-C})$ is crucial for closing an induction. The supremum condition $r \ge C_0^{-C}R$ is also important, which prevents a loss caused by iteration. Because of this condition we need only $O(1)$ iteration.

\begin{prop} \label{prop:core}
Let $R \ge 2^{C_0}$. If $R^{-N/4} \le \varepsilon \ll 1$, $r \ge C_0^{C}A_*R$ and $r/100 < \mathring r \le (2A_wA_d^{-1}+C_0^{-C})r$ then
\[
  \mathcal K_{\varepsilon}(R,r,\mathring r) \le {(1+Cc)}{\overline{\mathcal K}(R)} + 2^{CC_0}
\]
for any  $0<c\le 2^{-C_0}$.
\end{prop}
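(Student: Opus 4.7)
The plan is to localize $\phi_1\phi_2$ on $Q_R$ using an energy-concentration disc at time $t_e$, bound the localized piece by $\overline{\mathcal K}(R)$, and treat the leftover as a Schwartz-tail error. The hypothesis $r \ge C_0^C A_* R$ makes the concentration disc (of radius $r$) much larger than the spatial footprint of $Q_R$, so that the Schr\"odinger propagation between time $t_e$ and the time slab of $Q_R$ moves wave packets by distances dwarfed by $A_*r$; the hypothesis $\mathring r \le (2A_wA_d^{-1}+C_0^{-C}) r$ together with $\mathring r > r/100$ controls the number of radius-$\mathring r$ sub-discs needed to cover $\mathcal N(D^*)$. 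After normalizing $E(\phi_1)=E(\phi_2)=1$ and writing $\mathcal E := E^{\varepsilon}_{r,\mathring r,t_e}(\phi_1,\phi_2)$, two cases arise: if $\mathcal E = 1/2$ then the crude bound $\mathcal K_\varepsilon(R,r,\mathring r) \le 2^{1/p'}\mathcal K(R) \le 2^{1/p'}\overline{\mathcal K}(R)$ already closes the estimate; otherwise I fix an extremal disc $D^* = D(x^*, t_e; r) \in \mathfrak D_{t_e}^\varepsilon$ realizing the defining supremum up to a factor $(1+c)$.

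The core of the argument is a decomposition $\phi_j = \phi_j^{\mathrm{in}} + \phi_j^{\mathrm{out}}$ obtained by multiplying $\phi_j(t_e)$ by a smooth cutoff $\chi$ supported in a $(1+c)$-dilate of $\mathcal N(D^*)$, equal to $1$ on $\mathcal N(D^*)$, and then propagating both pieces by the free Schr\"odinger flow. Since $\chi$ has scale $\gtrsim A_*r \ge C_0^C A_* R$, its Fourier spread $O((A_*r)^{-1})$ is much smaller than the margin slack $R^{-1/N}$ in \eqref{mar_cond}, so both $\phi_j^{\mathrm{in}}$ and $\phi_j^{\mathrm{out}}$ remain admissible in the sense of Definition \ref{defn:hypoth}. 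Applying $\overline{\mathcal K}(R)$ to the in-in pair yields $\|\phi_1^{\mathrm{in}}\phi_2^{\mathrm{in}}\|_{L^p(Q_R)} \le \overline{\mathcal K}(R)(E(\phi_1^{\mathrm{in}})E(\phi_2^{\mathrm{in}}))^{1/2}$, and then tiling $\mathcal N(D^*)$ by $O(1)$ radius-$\mathring r$ sub-discs together with the factorization $\sup_{D_1,D_2}\|\phi_1\|_{L^2(D_1)}\|\phi_2\|_{L^2(D_2)} = (\sup_{D_1}\|\phi_1\|_{L^2(D_1)})(\sup_{D_2}\|\phi_2\|_{L^2(D_2)})$ gives $(E(\phi_1^{\mathrm{in}})E(\phi_2^{\mathrm{in}}))^{1/2} \le (1+Cc)\mathcal E$. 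Since $\mathcal E \le 1$ and $1/p' < 1$, this is at most $(1+Cc)\mathcal E^{1/p'}$, producing the desired main contribution.

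The cross and out-out terms are controlled by the Schwartz-tail decay of the free Schr\"odinger propagator: wave packets of $\phi_j^{\mathrm{out}}$ at time $t_e$ sit at distance $\gtrsim A_*r$ from the $\tilde\Xi_j$-backward cone of $Q_R$, so their contribution to $L^p(Q_R)$ is dominated by a high power of $(A_*r)^{-1}$ and sums to at most $2^{CC_0}\mathcal E^{1/p'}$ under the assumption $\varepsilon \ge R^{-N/4}$. The main obstacle I anticipate is extracting the tight multiplicative factor $(1+Cc)$ rather than the absolute $O(1)$ that a direct tiling of $\mathcal N(D^*)$ would produce; this must be achieved either by collapsing $\chi$ to a single radius-$\mathring r$ sub-disc and reassigning the overshoot to $\phi_j^{\mathrm{out}}$ (with the Schwartz-tail estimate tightened accordingly), or by a short iterated application of Proposition \ref{prop:sharp} inside $\mathcal N(D^*)$ that propagates the slack parameter $c$ additively through the recursion.
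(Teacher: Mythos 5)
There is a genuine gap, and it sits exactly where you flag it: the claimed bound $(E(\phi_1^{\mathrm{in}})E(\phi_2^{\mathrm{in}}))^{1/2}\le(1+Cc)\,\mathcal E$ is false for your cutoff. The disc $\mathcal N(D^*)$ has radius $A_*r=(4A_wA_d^{-1}+C)r$, while the sub-discs in Definition \ref{def:EC} have radius $\mathring r\le(2A_wA_d^{-1}+C_0^{-C})r$, so any tiling of $\mathcal N(D^*)$ by radius-$\mathring r$ discs costs a covering number strictly larger than $1$; if the mass of $\phi_j(t_e)$ is spread evenly over $\mathcal N(D^*)$ you only get $O(1)\mathcal E$, not $(1+Cc)\mathcal E$. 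Since $E^{\varepsilon}_{r,\mathring r,t_e}\ge\frac12$ by definition, an absolute constant here is not a technicality: the whole purpose of Proposition \ref{prop:core} is the factor $(1+Cc)$, which must be beaten by the $(1-C_0^{-C})$ gain of Proposition \ref{prop:energy concent}; any fixed loss like $2^{1/p'}$ or a covering constant makes the final induction in Section \ref{sec:EC} fail. For the same reason your treatment of the case $\mathcal E=\frac12$ via $\mathcal K_\varepsilon\le 2^{1/p'}\overline{\mathcal K}(R)$ does not ``close the estimate'': $2^{1/p'}\overline{\mathcal K}(R)\le(1+Cc)\overline{\mathcal K}(R)+2^{CC_0}$ would require $\overline{\mathcal K}(R)\lesssim 2^{CC_0}$, which is the conclusion of the whole argument, not an available hypothesis. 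Your proposed repairs do not rescue this: collapsing $\chi$ to a single radius-$\mathring r$ disc puts genuine, non-tail energy into $\phi_j^{\mathrm{out}}$ (the mass outside one $\mathring r$-disc need not be small), and the out-part is not a Schwartz tail in general, because $Q_R$ and the time $t_e$ bear no a priori relation; if $|t_Q-t_e|$ is large, the time-$t_e$ backward footprint of $Q_R$ under velocities in $\tilde\Xi_j$ is a huge disc and $\phi_j^{\mathrm{out}}$ contributes to $Q_R$ at full strength, so it cannot be discarded as an error of size $2^{CC_0}\mathcal E^{1/p'}$.

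The paper's mechanism is different and supplies exactly the two ingredients your sketch lacks. First, it dichotomizes on whether $Q$ meets both cones $\Lambda_j^{[e]}=\Lambda_j(x_e,t_e;r'/2)$ emanating from the concentration center. In the transverse case the intersection geometry forces $A_d|t_Q-t_e|\le 2r'$, and one localizes around $Q$ itself (the disc $D_Q$ of radius $4R$ at time $t_Q$), not around $D_e$; Lemma \ref{lem:Epd} propagates this localization back to time $t_e$ into a \emph{single} disc $\mathcal G_{j,t_e}(D_Q)$ of radius $A_w|t_Q-t_e|/2+16R\le\mathring r$ contained in $\mathcal N(D_e)$ (this is where $r\ge C_0^CA_*R$ is used), so the energies of $P_{D_Q}\phi_1,P_{D_Q}\phi_2$ are bounded by the concentration with no covering loss, and the $(1+Cc)$ constant comes from Proposition \ref{prop:sharp} applied to the localized pair. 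Second, when $Q$ misses one of the cones, the product is shown to be negligible by a bootstrap: one uses the extremality of $(\phi_1,\phi_2)$ for $\mathcal K_\varepsilon$ together with the lower bound \eqref{DetCen}, obtaining $\|(1-P_{\check D_e})\phi_1\phi_2\|_{L^p(Q)}\le\kappa\|\phi_1\phi_2\|_{L^p(Q)}$ with $\kappa=(1-\varepsilon^2+Cr^{-N})^{1/2p}<1$, which is precisely where the hypothesis $\varepsilon\ge R^{-N/4}$ enters. Your proposal uses neither the extremality of the pair nor the cone dichotomy, and without them the $(1+Cc)$ constant cannot be extracted.
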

\noindent We will prove this in Section \ref{sec:core}.
In the above estimate, the constant $(1+Cc)$ is crucial. Because of the condition $r \ge C_0^{C}R$ in the above proposition, we can combine this with the previous one. To resolve it we use the following recursive estimate.  
\begin{prop} \label{prop:scale}
Let $R \ge 2^{C_0}$. If $0<\varepsilon \ll 1$, $C_0^{-C}R \le r \le C_0^{C}R$ and $\mathring r >0$,
\[
\mathcal K_{\varepsilon}(R, r, \mathring r) \le (1+Cc) \mathcal K_{\varepsilon}(R/C_0,r_{\natural}, \mathring r_{\natural})+c^{-C}
\] 
for any $0 < c \le 2^{-C_0}$, where $r_{\natural} := r(1-Cr^{-1/3N})$ and $\mathring r_{\natural} := (\mathring r)_{\natural}$.
\end{prop}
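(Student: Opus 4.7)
The plan is to mimic the scale-reduction argument of Proposition~\ref{prop:sharp} while carrying along the energy-concentration parameters $r, \mathring r$. At time $t_e$, I fix an auxiliary spatial scale $r_\sharp$ satisfying $R^{1/N} \lesssim r_\sharp \lesssim R/C_0$, so that $r_\sharp^{-1}$ is safely smaller than the margin budget $(R/C_0)^{-1/N} - R^{-1/N}$, and introduce a smooth partition of unity $\{\psi_\alpha\}$ on $\mathbb R^n$ adapted to the $r_\sharp$-lattice with $\sum_\alpha \psi_\alpha \equiv 1$ and $\sum_\alpha \psi_\alpha^2 \le 1$. I define $\phi_j^\alpha$ to be the free Schr\"odinger evolution of the localized datum $\psi_\alpha \phi_j(t_e)$; then $\phi_j = \sum_\alpha \phi_j^\alpha$ throughout spacetime, each $\phi_j^\alpha$ has Fourier support in an $O(r_\sharp^{-1})$-enlargement of $\Sigma_j$, and the margin condition at scale $R/C_0$ is preserved. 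The shrinkages $r \mapsto r_\natural = r(1 - Cr^{-1/3N})$ and $\mathring r \mapsto \mathring r_\natural$ are chosen large enough to absorb the spatial uncertainty introduced by this Fourier spreading in the discs appearing in the energy concentration.

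Next I tile $Q_R$ by disjoint sub-cubes $\{Q_\beta\}$ of sidelength $R/C_0$, so that
\[
\|\phi_1 \phi_2\|_{L^p(Q_R)}^p = \sum_\beta \|\phi_1 \phi_2\|_{L^p(Q_\beta)}^p,
\]
and on each $Q_\beta$ I expand $\phi_1 \phi_2 = \sum_{\alpha_1, \alpha_2} \phi_1^{\alpha_1} \phi_2^{\alpha_2}$ and apply the hypothesis at scale $R/C_0$ to each term,
\[
\|\phi_1^{\alpha_1} \phi_2^{\alpha_2}\|_{L^p(Q_\beta)} \le \mathcal K_\varepsilon(R/C_0, r_\natural, \mathring r_\natural) (E(\phi_1^{\alpha_1}) E(\phi_2^{\alpha_2}))^{1/(2p)} E^\varepsilon_{r_\natural, \mathring r_\natural, t_e}(\phi_1^{\alpha_1}, \phi_2^{\alpha_2})^{1/p'}.
\]
The monotonicity bound $E^\varepsilon_{r_\natural, \mathring r_\natural, t_e}(\phi_1^{\alpha_1}, \phi_2^{\alpha_2}) \le E^\varepsilon_{r, \mathring r, t_e}(\phi_1, \phi_2)$, which follows from the pointwise domination $|\phi_j^\alpha(t_e)| \le |\phi_j(t_e)|$ together with the slight disc shrinkage, pulls the energy-concentration factor out of the inner sum.

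I then split the resulting triple sum over $(\alpha_1, \alpha_2, \beta)$ into a ``main'' region, consisting of triples for which both wave-packet trajectories $\alpha_j + (t - t_e) \tilde\Xi_j$ visit $Q_\beta$, and its complement. Off-region contributions decay rapidly via the Schwartz tails of $\psi_{\alpha_j}$ and the dispersive decay of the Schr\"odinger group, summing to at most $c^{-C} E(\phi_1)^{1/2} E(\phi_2)^{1/2}$. For the main region, a bounded-multiplicity statement (that each pair $(\alpha_1, \alpha_2)$ lights up $O(1)$ sub-cubes) combines with Cauchy--Schwarz in $(\alpha_1, \alpha_2)$ and the near-orthogonality $\sum_\alpha E(\phi_j^\alpha) \le E(\phi_j)$ to yield the $(1+Cc)\, \mathcal K_\varepsilon(R/C_0, r_\natural, \mathring r_\natural)$ main term, the $(1+Cc)$ constant reflecting the small overlap inherent in the bilinear wave-packet combinatorics.

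The principal technical difficulty is controlling this bilinear combinatorics. Because wave packets at frequency $\xi \in \tilde\Xi_j$ travel with velocity $\xi$ and $\tilde\Xi_j$ has diameter $O(1)$, over the time range $|t - t_e| \lesssim R$ of $Q_R$ they traverse distances $\sim R$, far exceeding the sub-cube sidelength $R/C_0$. Consequently many spatially separated $\alpha$'s send wave packets into a single $Q_\beta$ at times far from $t_e$, and a careful wave-packet counting argument in the spirit of the proof of Proposition~\ref{prop:sharp} is required to verify the bounded-multiplicity claim. The parameter $c$ tunes the sharpness of the ``light cone'' cutoff separating main from tail contributions, thereby producing the desired $(1+Cc)$ versus $c^{-C}$ trade-off.
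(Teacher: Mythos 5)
Your proposal has a fatal gap at its core summation step. After localizing the data at time $t_e$ into pieces $\phi_j^\alpha$ and expanding $\phi_1\phi_2=\sum_{\alpha_1,\alpha_2}\phi_1^{\alpha_1}\phi_2^{\alpha_2}$ on each subcube $Q_\beta$, you apply the hypothesis term by term and must then sum $\big(E(\phi_1^{\alpha_1})^{1/2}E(\phi_2^{\alpha_2})^{1/2}\big)^{1/p}$ over $(\alpha_1,\alpha_2,\beta)$. Since $1/(2p)<1$, near-orthogonality $\sum_\alpha E(\phi_j^\alpha)\le E(\phi_j)$ does not prevent $\sum_\alpha E(\phi_j^\alpha)^{1/(2p)}$ from being as large as $(\#\alpha)^{1-1/(2p)}$, and the rescue you invoke --- that each pair $(\alpha_1,\alpha_2)$ ``lights up $O(1)$ subcubes'' --- is false for exactly the reason you yourself flag: the essential spacetime support of a piece localized at scale $r_\sharp\le R/C_0$ is a conic set whose cross-section at time-distance $\sim R$ from $t_e$ has radius $\sim R$, so each piece meets $\sim C_0^{C}$ subcubes and, dually, a subcube with $|t-t_e|\sim R$ receives contributions from $\sim (R/r_\sharp)^n$ pieces of each type. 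Any loss of this kind multiplies $\mathcal K_\varepsilon(R/C_0,r_\natural,\mathring r_\natural)$ by at least $C_0^{C}$ (or a power of $R$ if $r_\sharp$ is small), which destroys the $(1+Cc)$ constant that the whole endpoint iteration in Section \ref{sec:EC} depends on. You acknowledge that ``a careful wave-packet counting argument is required,'' but no such counting can make a term-by-term triangle inequality over all pairs close with constant $1+Cc$; this is precisely the obstruction the paper's machinery is designed to avoid. A secondary gap: your monotonicity claim $E^{\varepsilon}_{r_\natural,\mathring r_\natural,t_e}(\phi_1^{\alpha_1},\phi_2^{\alpha_2})\le E^{\varepsilon}_{r,\mathring r,t_e}(\phi_1,\phi_2)$ does not follow from pointwise domination alone, because the admissible collection $\mathfrak D^{\varepsilon}_{t}$ in \eqref{DetCen} is normalized by the energies of the pair being measured, so a localized piece can admit discs that the original pair does not.

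The paper's proof never expands the product over pairs. It applies Proposition \ref{prop:tao}, which replaces $\phi_1\phi_2$ by $[\Phi_1][\Phi_2]$ --- the weighted wave-packet decomposition $\Phi_j^{(\Delta)}=\sum_{T_j}\frac{m_{\Delta,T_j}}{m_{T_j}}\phi_{T_j}$ restricted to its own cube $\Delta$, so that exactly one product appears per subcube --- with all cross interactions discarded at the additive cost $c^{-C}$ via the orthogonality and transversality analysis of Section 4 (Lemma \ref{englem} and the estimates \eqref{geometric1}, \eqref{SS1}). Then Definition \ref{defn:hypoth2} is applied on each $\Delta$, the factor $\sum_\Delta E(\Phi_1^{(\Delta)})^{1/2}E(\Phi_2^{(\Delta)})^{1/2}$ is controlled with constant $(1+Cc)$ by the Bessel-type Lemma \ref{tube_sum}, and the transfer of the energy-concentration factor to the pieces is done by the final lemma of Section \ref{sec:endPP}, which uses the localization operator $P_D$ of Lemma \ref{lem:ess supp} to prove $\|\Phi_j^{(\Delta)}\|_{L^2(D(z_0;r_\natural))}\le(1+Cc)\|\phi_j\|_{L^2(D(z_0;r))}+Cr^{-N}$; this is where the shrinkage $r\mapsto r_\natural$, $\mathring r\mapsto\mathring r_\natural$ actually originates. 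To repair your argument you would essentially have to reconstruct this structure: a decomposition indexed by the subcubes themselves (not by initial positions), an orthogonality mechanism converting all off-diagonal terms into an additive $c^{-C}$, and a quantitative disc-persistence statement replacing the unproved monotonicity.
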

\noindent We will prove this in Section \ref{sec:endPP}. The above recursive estimate is obtained by modifying the proof of Proposition \ref{prop:sharp}.

\medskip
 
To prove \eqref{goal} we combine the above three propositions.
From Proposition \ref{prop:core} and Proposition \ref{prop:scale} it follows that if $R^{-N/4 } \le \varepsilon \ll 1$, $ r \ge C_0^{-C}R$ and  $\mathring r \le (2A_wA_d^{-1}+C_0^{-C})r$, then
\begin{equation} \label{uni}
\mathcal K_{\varepsilon}(R,r, \mathring r) \le (1+Cc) \overline{\mathcal K}(R) + 2^{CC_0}+c^{-C}.
\end{equation}
Indeed, by Proposition \ref{prop:core} we may assume that $C_0^{-C}R \le r \le C_0^{C}R$.  Let $r_0 :=r$, $r_{j+1}:= (r_j)_{\natural}$ and $\mathring r_{j+1}:= (\mathring r_j)_{\natural}$
for $j=0,1,2,\cdots$. Then if $\mathring r \le (2A_wA_d^{-1}+C_0^{-C})r$ then $\mathring r_{\natural} \le (2A_wA_d^{-1}+C_0^{-C})r_{\natural}$. We take $J$ as the smallest integer such
that $r \ge C_0^{C-J}A_*R$.  Because of the condition $r \ge
C_0^{-C}R$, we have $J=O(1)$. 
From Proposition \ref{prop:scale} it follows that
\[
  \mathcal K_{\varepsilon}(R/C_0^{j},r_j, \mathring r_j) \le (1+Cc) \mathcal K_{\varepsilon}( R/C_0^{j+1},r_{j+1}, \mathring r_{j+1}) +  c^{-C}.
\]
By iteration we have
\[
  \mathcal K_{\varepsilon}(R,r,\mathring r) \le (1+Cc)^J \mathcal K_{\varepsilon}(R/C_0^J, r_J,\mathring r_J) + 2^J c^{-C}.
\]
Since $r \ge C_0^{-C}R \ge C_0^{-C}2^{C_0}$ and $C_0$ is very large, we see that $r_J$ is comparable to $r$.
By Proposition \ref{prop:core}, 
\begin{equation*}
\mathcal K_{\varepsilon}(R,r,\mathring r) \le (1+Cc)^{{J+1}} {\overline{\mathcal K}(R/C_0^J)} + 2^{CC_0}+ 2^{{J+1}} c^{-C}.
\end{equation*}
Since $\overline{\mathcal K}(R/C_0^J) \le \overline{\mathcal K}(R)$, this estimate implies \eqref{uni}.

Combine Proposition \ref{prop:energy concent} with \eqref{uni}. Then,
\[
  \mathcal K(R) \le (1-C_0^{-C})  (1+Cc) \overline{\mathcal K}(R) + 2^{CC_0} + c^{-C}.
\]
If we set $c=2^{-C_0}$ then 
$
  \overline{\mathcal K}(R) \le (1-C_0^{-C}) \overline{\mathcal K}(R) + 2^{CC_0}.
$
By rearranging we obtain $\overline{\mathcal K}(R) \le C_0^{C}2^{CC_0} \le 2^{C'C_0}$, which implies
\eqref{goal}.

\section{Preliminaries for the bilinear restriction estimate}

Let $R \ge 2^{C_0}$, $0< c \le 2^{-C_0}$ and $\kappa>0$ an integer such that
\begin{equation} \label{kapa}
  r:=2^{-\kappa}R \sim R^{1/2}.
\end{equation}

\subsection{A wave packet decomposition} 
Let $L=c^{-2} r \mathbb Z^n$ and $V_j$ be a maximal $r^{-1}$-separated
subset of $\tilde\Xi_j$ for $j = 1,2$.
For each $(x_0,v_j) \in L \times V_j$, we define a \textit{tube}
$T_j=T_j^{(x_0,v_j)}$ with initial position $x_0$ and direction $v_j$
by
\[
  T_j=\big\{(x,t) \in \mathbb R^n \times \mathbb R:
  |t| \le R,~  |(x-x_0)-tv_j| \le  r \big\},
\]
and let $\mathbf T_j$ denote the collection of these tubes. We denote by $x(T_j) = x_0$ the position of $T_j$ and $v(T_j) =v_j$ the velocity of $T_j$.

Now we decompose $\phi_j$ into wave packets essentially supported on tubes $T_j$. 
To partition $\mathbb
R^n$ into cubes of {sidelength} ${c^{-2}r}$, 
we set
\begin{equation} \label{spPar}
  \eta^{x_0}(x):= \eta \Big(\frac{x-x_0}{c^{-2} r}\Big).
\end{equation}
Then,
\begin{equation} \label{spaceDecom}
\sum_{x_0 \in L} \eta^{x_0}(x) = 1.
\end{equation}
To partition $\tilde\Xi_j$, let $B_{v_j}$ be a neighborhood of $v_j \in V_j$ with pairwise disjoint interiors so that
\[
  \tilde\Xi_j = \bigcup_{v_j \in V_j} B_{v_j}.
\]
For each $\omega \in D(0;1/r)$, we define a map
$\Omega_{\omega}:\mathbb R^n \rightarrow \mathbb R^n$ by
$\Omega_{\omega}(v) = v+w$. Let $G$ be the set of these maps, and  
define  $d\Omega$ by 
\[
  \int_G F(\Omega) d\Omega = \frac{1}{|D(0;1/r)|}\int_{D(0;1/r)} F(\Omega_w) dw.
\]
For each $\Omega \in G$ and  $v_j \in V_j$, we define the
multiplier $P_{\Omega, v_j}$ by
\[
  \widehat{P_{\Omega,v_j}f} = \chi_{\Omega(B_{v_j})} \widehat f,
\]
where $\chi$ denotes a characteristic function.
For each $T_j=T_j^{(x_0,v_j)} \in \mathbf T_j$, we define a function $f_{T_j}$
by
\begin{equation} \label{eqn:f_T}
f_{T_j}(x) :=\eta^{x_0}(x) \int_G P_{\Omega,v_j}f_j(x) {d\Omega}.
\end{equation}

We define a \textit{wave packet} $\phi_{T_j}$ as
\begin{equation} \label{phiT}
  \phi_{T_j} := \mathcal U_j  f_{T_j}.
\end{equation}
Then by the linearity of Fourier transform, 
\begin{equation} \label{wavepacket decomposition}
\phi_j (x,t)= \sum_{T_j \in \mathbf T_j} \phi_{T_j}(x,t).
\end{equation}


\begin{lem}[Properties of the wave packets] \label{lem:wavepack}
Suppose that $\phi_j$ has a Fourier support in $\mathit \Sigma_j$ and a margin
\begin{equation} \label{marT}
\mathrm{marg}(\phi_j) \ge Cr^{-1}
\end{equation} 
for $j=1,2$. Let $T_j=T_j^{(x_0,v_j)}$.
We define a constant
$h_{T_j}$ by
\begin{equation} \label{hT}
  h_{T_j} := r^{n/2}
  \mathcal M\Big(\int_G P_{\Omega,v_j}f_j {d\Omega}\Big)(x_0),
\end{equation}
where $\mathcal M$ denotes the Hardy--Littlewood maximal operator.
Then we have the followings.
\begin{itemize}
\item The margin of $\phi_{T_j}$ satisfies
\begin{equation} \label{Tmargin}
  \mathrm{marg}(\phi_{T_j}) \ge \mathrm{marg}(\phi_j)-Cr^{-1}.
\end{equation}

\item
For $(x,t) \in \mathbb R^n \times [-CR,CR]$,
\begin{equation}\label{wavepack}
  |\phi_{T_j}(x,t)| \le C_M c^{-C} r^{-n/2} h_{T_j}
  \left( 1+ \frac{\mathrm{dist}(T_j,(x,t))}{r}\right)^{-M}, \qquad \forall M>0.
\end{equation}
 
\item
\begin{equation}\label{l2sum}
  \Big( \sum_{T_j \in \mathbf T_j} h_{T_j}^2 \Big)^{1/2}
  \lesssim c^{-C}\|f_j\|_2.
\end{equation}
\end{itemize}
\end{lem}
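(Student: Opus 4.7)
The plan is to establish the three items in turn by standard Fourier analysis.

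For the margin bound \eqref{Tmargin}, the strategy is to track the Fourier support of $f_{T_j}$. Set $g_{v_j} := \int_G P_{\Omega, v_j} f_j \, d\Omega$, so that $\widehat{g_{v_j}} = (\int_G \chi_{\Omega(B_{v_j})} \, d\Omega) \widehat{f_j}$; averaging over $G$ (whose elements are translations by vectors of size $\le r^{-1}$) produces a symbol supported in the $Cr^{-1}$-neighborhood of $B_{v_j}$. Since $f_{T_j} = \eta^{x_0} g_{v_j}$ and $\widehat{\eta^{x_0}}$ is supported in a ball of radius $c^2/r \le r^{-1}$, the resulting convolution places $\supp(\widehat{f_{T_j}})$ inside a $Cr^{-1}$-neighborhood of $\supp(\widehat{f_j}) \cap \tilde\Xi_j$. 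Projecting to the paraboloid, $\supp(\widehat{\phi_{T_j}})$ lies within $O(r^{-1})$ of $\supp(\widehat{\phi_j}) \subset \mathit\Sigma_j$, which gives \eqref{Tmargin}.

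For the pointwise estimate \eqref{wavepack}, I would factor out the carrier oscillation and write
\[
\phi_{T_j}(x, t) = e^{2\pi i(x \cdot v_j - \frac{t}{2}|v_j|^2)} \int e^{2\pi i((x - x_0 - tv_j) \cdot \zeta - \frac{t}{2}|\zeta|^2)} F(\zeta) \, d\zeta,
\]
where $F(\zeta) := e^{2\pi i x_0 \cdot (v_j + \zeta)} \widehat{f_{T_j}}(v_j + \zeta) a_j(v_j + \zeta)$ is supported on $|\zeta| \lesssim r^{-1}$ and, being the Fourier transform of a function essentially supported in $\{|y - x_0| \lesssim c^{-2}r\}$, satisfies $|\partial^\alpha F(\zeta)| \lesssim (c^{-2}r)^{|\alpha|} \|F\|_\infty$. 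Because $|t| \le CR \sim r^2$ by \eqref{kapa}, the quadratic part of the phase contributes only $O(1)$, and repeated integration by parts in $\zeta$ against the linear factor yields decay $(1 + \mathrm{dist}(T_j,(x,t))/(c^{-2}r))^{-M}$; the elementary inequality $(1 + d/(c^{-2}r))^{-M} \lesssim c^{-CM}(1 + d/r)^{-M}$ converts this to the stated tube-scale decay at cost $c^{-C}$. The amplitude is controlled by the crude $L^\infty$-bound $|\phi_{T_j}(x, t)| \lesssim r^{-n}\|\widehat{f_{T_j}} a_j\|_1 \lesssim c^{-C} r^{-n}\int |\eta^{x_0}(y) g_{v_j}(y)|\,dy$, and since $\eta^{x_0}$ essentially confines the integration to the ball of radius $c^{-2}r$ around $x_0$ (with Schwartz tails), this is dominated by $c^{-C} r^{-n/2} h_{T_j}$.

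For \eqref{l2sum}, I fix $v_j$ and use that $\widehat{g_{v_j}}$ lives on a ball of radius $O(r^{-1})$ to obtain the Bernstein-type pointwise bound $\mathcal M(g_{v_j})(x_0)^2 \lesssim r^{-n}\int |g_{v_j}(y)|^2\, \omega_{x_0,r}(y)\,dy$ for a Schwartz envelope $\omega_{x_0,r}$ of scale $r$ centered at $x_0$. Summing over the $c^{-2}r$-separated lattice $L$ and using the finite overlap of the envelopes yields $\sum_{x_0 \in L} r^n \mathcal M(g_{v_j})(x_0)^2 \lesssim \|g_{v_j}\|_2^2$. The almost-orthogonality $\sum_{v_j \in V_j} \|g_{v_j}\|_2^2 \le \|f_j\|_2^2$ then follows from Plancherel together with the pointwise inequality $\sum_{v_j} (\int_G \chi_{\Omega(B_{v_j})}\,d\Omega)^2 \le \sum_{v_j} \int_G \chi_{\Omega(B_{v_j})}\,d\Omega \le 1$, valid because $\{B_{v_j}\}$ partitions $\tilde\Xi_j$. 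The main technical delicacy lies in (ii): the natural spatial localization of $f_{T_j}$ is on the coarser scale $c^{-2}r$ imposed by $\eta^{x_0}$, whereas the tubes have width $r$, and the prefactor $c^{-C}$ in \eqref{wavepack} is precisely what is needed to absorb the loss from converting decay on the scale $c^{-2}r$ into decay on the tube scale $r$.
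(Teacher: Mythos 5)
Your treatment of \eqref{Tmargin} is the paper's argument, and your proof of \eqref{wavepack} is the paper's estimate in frequency-side clothing: the paper writes $\phi_{T_j}=K_{v_j}\ast f_{T_j}$ with a kernel localized at frequency scale $r^{-1}$ around $v_j$, integrates by parts using $|t|\lesssim R\sim r^{2}$, and then dominates the $\eta^{x_0}$-weighted integral of $g_{v_j}:=\int_G P_{\Omega,v_j}f_j\,d\Omega$ by $c^{-C}\mathcal M(g_{v_j})(x_0)$, which is exactly the content of your scale conversion from $c^{-2}r$ to $r$. (One small repair: $|\partial^{\alpha}F(\zeta)|$ is controlled by the moments $\int |y-x_0|^{|\alpha|}|f_{T_j}(y)|\,dy\lesssim (c^{-2}r)^{|\alpha|}\|f_{T_j}\|_1$, not by $(c^{-2}r)^{|\alpha|}\|F\|_\infty$; this is what your amplitude bound actually supplies, so the argument is unaffected.)

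The proof of \eqref{l2sum}, however, rests on a false lemma. The claimed bound $\mathcal M(g_{v_j})(x_0)^2\lesssim r^{-n}\int |g_{v_j}|^2\,\omega_{x_0,r}$ with a Schwartz envelope of scale $r$ fails already for a single band-limited bump: take $g_{v_j}(y)=\phi((y-z)/r)$ with $\phi\ge 0$, $\widehat\phi$ supported in the unit ball, and $|z-x_0|=d\gg r$. Averaging over $B(x_0,2d)$ gives $\mathcal M(g_{v_j})(x_0)\gtrsim (r/d)^{n}$, while the right-hand side is $\approx \omega_{x_0,r}(z)$, which decays faster than any power of $r/d$. Nor can the route be saved by weakening the envelope to $(1+|y-x_0|/r)^{-\beta}$: your lattice summation needs $\beta>n$, but for every $\beta>n$ the pointwise bound is still false. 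Indeed, let $g\ge 0$ be band-limited at scale $r^{-1}$ with $g(y)\approx (1+|y-x_0|/r)^{(\beta-n)/2}$ on $B(x_0,s)$ and negligible outside (a superposition of nonnegative width-$r$ bumps with polynomially growing coefficients); then the average of $g$ over $B(x_0,s)$ is $\approx (s/r)^{(\beta-n)/2}$, whereas $r^{-n}\int g^{2}(1+|y-x_0|/r)^{-\beta}\,dy\approx \log(s/r)$, so the claimed bound fails by a factor $(s/r)^{\beta-n}/\log(s/r)$. The point is that no pointwise weighted-$L^2$ bound of this shape can stand in for the Hardy--Littlewood maximal theorem, which is the real input here. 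The paper's proof uses band-limitedness only to transfer the maximal function between nearby points: $\mathcal M(g_{v_j})(x_0)\lesssim c^{-C}\mathcal M(g_{v_j})(x)$ for $|x-x_0|\lesssim c^{-2}r$, hence $r^{n}\mathcal M(g_{v_j})(x_0)^2\lesssim c^{-C}\int_{B(x_0,c^{-2}r)}\mathcal M(g_{v_j})^2$; summing over the $c^{-2}r$-separated lattice (bounded overlap) and applying $\|\mathcal M(g_{v_j})\|_2\lesssim \|g_{v_j}\|_2$ gives $\sum_{x_0\in L}h_{T_j}^2\lesssim c^{-C}\|g_{v_j}\|_2^2$ for each fixed $v_j$. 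Your concluding almost-orthogonality step $\sum_{v_j}\|g_{v_j}\|_2^2\le \|f_j\|_2^2$ is correct and is a clean substitute for the paper's Minkowski-plus-Plancherel argument, but the lattice step before it must be replaced as above.
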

\begin{proof}
Consider \eqref{Tmargin}. From the definition of $f_{T_j}$ we can see that the Fourier support of $f_{T_j}$ is contained in a $O(r^{-1})$-neighborhood of $v_j$. So, the spacetime Fourier transform of $\phi_{T_j}$ is supported in a $O(r^{-1})$-neighborhood of the spacetime Fourier support of $\phi_j$. From this we have \eqref{Tmargin}.

Consider \eqref{wavepack}.  If $\rho$ is a smooth bump function supported on a
$O(1)$-neighborhood of the origin and $\rho^{v_j}(\xi) :=\rho\big(4^{-1}r(\xi-v_j) \big)$ then we may replace $\widehat f_{T_j}$ with $\rho^{v_j} \widehat f_{T_j}$.
By interchanging the integrals we may write
\[
  \phi_{T_j}(x,t) = \int K_{v_j}(x-y,t) f_{T_j}(y) dy,
\]
where
\begin{equation} \label{def:Kv}
K_{v_j}(x,t) = \int e^{2\pi i (x\cdot \xi - \frac{1}{2}t|\xi|^2)}
\rho^{v_j}(\xi) d\xi.
\end{equation}
By integration by parts, if $|t| \lesssim R$ then
\begin{equation} \label{ker_est2}
  |K_{v_j}(x,t)| \le C_M r^{-n} \left( 1+
  \frac{|x-tv_j|}{r} \right)^{-M}, \qquad \forall M>0.
\end{equation}
Indeed, let $\delta=r^{-1}$ and
$\Psi(x,t,\xi)=2\pi(x\cdot(\delta\xi+v_j)-
\frac{1}{2}t|\delta\xi+v_j|^2)$. We rewrite as
\begin{equation} \label{wavepacket_ker}
  K_{v_j}(x,t) = \delta^n \int e^{ i \psi(x,t,\xi)}
  \rho(\xi) d\xi.
\end{equation}
Suppose that $|x-tv_j| \ge
C\delta^{-1}$. Then we have
\(
  |\nabla_{\xi} \Psi(x,t,\xi)| \ge C\delta|x-tv_j|,
\)
since $|\delta^2t\xi| \lesssim 1$. By integration by parts,
\[
  |K_{v_j}(x,t)| \le C_M \delta^n (\delta|x-tv_j|)^{-M}, \qquad \forall M>0.
\]
On the other hand, we have a trivial estimate
\(
  |K_{v_j}(x,t)| \lesssim \delta^n.
\)
By combining these two estimates we have \eqref{ker_est2}.
Thus,
\begin{align*}
  |\phi_{T_j}&(x,t)|\\
  &\le C_M r^{-n} \int \left( 1+
  \frac{|(x-y)-tv_j|}{r} \right)^{-M}
  \eta^{x_0}(y) \bigg|\int_G P_{\Omega,v_j}f_j(y) {d\Omega} \bigg| dy \\
  &\le C_M c^{-C} \left( 1+
  \frac{|(x-x_0)-tv_j|}{r} \right)^{-M}
  \mathcal M\Big(\int_G P_{\Omega,v_j}f_j {d\Omega}\Big)(x_0) \\
  &\le C_M c^{-C} r^{-n/2} h_{T_j}
  \left( 1+ \frac{\mathrm{dist}(T_j,(x,t))}{r}\right)^{-M}.
\end{align*}

Consider \eqref{l2sum}. By the uncertainty principle, if $|x-x_0|
\lesssim c^{-2}r$ then
\[
  \mathcal M\Big(\int P_{\Omega,v_j}f_j {d\Omega}\Big)(x_0) \lesssim
  c^{-C}\mathcal M\Big(\int P_{\Omega,v_j}f_j {d\Omega}\Big)(x).
\]
Thus,
\[
  \sum_{T_j \in \mathbf T_j} h_{T_j}^2
  \lesssim c^{-C} \sum_{v_j \in V_j} \int \Big|
  \mathcal M\Big(\int P_{\Omega,v_j}f_j {d\Omega}\Big)(x) \Big|^2 dx.
\]
By the Hardy--Littlewood maximal theorem and Minkowski's inequality, 
\begin{align*}
  \big(\sum_{T_j \in \mathbf T_j} h_{T_j}^2 \big)^{1/2}
  &\lesssim c^{-C}
  \Big(\sum_{v_j \in V_j}\Big\|\int P_{\Omega,v_j}f_j {d\Omega}
  \Big\|_2^2 \Big)^{1/2}\\
  &\le c^{-C} \int \big(\sum_{v_j \in V_j} \|P_{\Omega,v_j}f_j \|_2^2
  \big)^{1/2}{d\Omega}.
\end{align*}
By Plancherel's theorem and orthogonality,
\[
  \sum_{v_j \in V_j} \|P_{\Omega,v_j}f_j \|_2^2 \lesssim
  \|f_j\|_2^2.
\]
Inserting this into the previous integral we have \eqref{l2sum}.
\end{proof}

\subsection{Estimates on a light conic set.}
We define a kernel $K_{j,t}$ by
\begin{equation*}
  K_{j,t}(x) = \int e^{2\pi i (x \cdot \xi - 
  \frac{1}{2}t|\xi|^2)} a_j(\xi) d\xi.
\end{equation*}
Then $\phi_j$ is written as
\begin{equation} \label{kerF}
  \phi_j(x,t)=\mathcal U_jf(x,t) = K_{j,t} \ast f(x).
\end{equation}
\begin{lem} \label{lem:ker_est}
Let $\Lambda_{j,t}=\{x \in \mathbb R^n: (x,t) \in
\Lambda_j(0)\}$ where $\Lambda_j(0)$ is defined as in \eqref{eqn:Cone2}. Then,
\begin{equation} \label{ker_est}
\big| K_{j,t}(x) \big| 
\le C_M
(1+ \mathrm{dist}(\Lambda_{j,t},x))^{-M}, \qquad \forall M>0.
\end{equation}
\end{lem}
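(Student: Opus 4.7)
The estimate is a standard non-stationary phase bound, which I would prove by integration by parts in the $\xi$-integral. The phase $\Phi(\xi) = x \cdot \xi - \tfrac{1}{2}t|\xi|^2$ has gradient $\nabla_\xi \Phi(\xi) = x - t\xi$, and for $\xi \in \supp(a_j) \subset \tilde{\Xi}_j$ we have $t\xi \in t\tilde{\Xi}_j = \Lambda_{j,t}$, so
\begin{equation*}
|\nabla_\xi \Phi(\xi)| = |x - t\xi| \ge \mathrm{dist}(x,\Lambda_{j,t}) =: d.
\end{equation*}
When $d \le 1$ the trivial estimate $|K_{j,t}(x)| \le \|a_j\|_{L^1} \lesssim 1$ already gives the desired bound (since $(1+d)^{-M} \ge 2^{-M}$), so I may assume $d \ge 1$.

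For $d \ge 1$, my plan is to integrate by parts $M$ times using the transport operator
\begin{equation*}
L = \frac{x - t\xi}{2\pi i\,|x-t\xi|^2}\cdot\nabla_\xi,
\end{equation*}
which satisfies $L e^{2\pi i\Phi} = e^{2\pi i \Phi}$, yielding $K_{j,t}(x) = \int e^{2\pi i\Phi}(L^*)^M a_j\, d\xi$. A routine induction shows that $(L^*)^M a_j$ is a finite sum of terms of the form $c_{\alpha,k}\,t^k\,(\partial^\alpha a_j)(\xi)/|x-t\xi|^{M+k}$ with $|\alpha| \le M-k$ and $0 \le k \le M$; the powers of $t$ arise from the identity $\nabla_\xi\cdot \big((x-t\xi)/|x-t\xi|^2\big)=(2-n)t/|x-t\xi|^2$. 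Since $a_j$ is smooth with compact support in a fixed bounded set, each $\partial^\alpha a_j$ is uniformly bounded; combined with $|x-t\xi| \ge d$ on the support, each term is controlled by $C_M\,|t|^k/d^{M+k}$, which is at most $C_M/d^M$ in the regime $|t| \lesssim d$.

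The main technical point is the opposite regime $|t| \gg d$, where the naive bound is insufficient. I would address it by the change of variables $\xi = x/t + \mu/\sqrt{|t|}$ (for $t \ne 0$; the case $t=0$ is immediate since $K_{j,0}=\widehat{a_j}(-\cdot)$ is Schwartz), which recasts $K_{j,t}(x)$ as $|t|^{-n/2}$ times the Fresnel-type integral $\int e^{\mp\pi i|\mu|^2}\,a_j(x/t + \mu/\sqrt{|t|})\,d\mu$; the rescaled amplitude is supported in $\{|\mu| \ge d/\sqrt{|t|}\}$, and the phase now has constant-coefficient Hessian, so no further powers of $t$ arise from iterating $L^*$ in $\mu$. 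A further non-stationary phase argument in $\mu$—together with the observation that $a_j$, being a $C^\infty$ cut-off, vanishes to infinite order on $\partial\tilde\Xi_j$, so that the contribution from the boundary region where $|\mu|$ is close to $d/\sqrt{|t|}$ is negligible—yields decay faster than any power of $d$. Combining the two regimes gives the uniform bound $|K_{j,t}(x)| \le C_M(1+d)^{-M}$.
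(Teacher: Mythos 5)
Your proposal is correct in outline, and it is in fact more careful than the paper's own argument. The paper disposes of the lemma in a few lines: it notes that $|\nabla_\xi(x\cdot\xi-\tfrac12 t|\xi|^2)|=|x-t\xi|\ge\mathrm{dist}(x,\Lambda_{j,t})$ on $\tilde\Xi_j$ and simply invokes integration by parts, without addressing the point you isolate, namely that the Hessian of the phase is $-t\,\mathrm{Id}$, so the naive iteration of $L^*$ only yields $C_M\sum_k|t|^k d^{-M-k}$ (with $d=\mathrm{dist}(x,\Lambda_{j,t})$) and is conclusive only when $|t|\lesssim d$; the author does treat the analogous issue in the wave-packet kernel bound \eqref{ker_est2} by rescaling and using $|t|\lesssim R$, but no such step appears here, and the lemma is applied with $|t|$ much larger than $d$. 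Your splitting into the regimes $|t|\lesssim d$ and $|t|\gg d$, with the parabolic change of variables $\xi=x/t+\mu/\sqrt{|t|}$ in the second regime, is exactly what is needed to make the bound uniform in $t$, so your route genuinely supplements the paper's terse proof rather than reproducing it. One small imprecision in your second regime: the smallness of the amplitude is needed not only where $|\mu|$ is close to $d/\sqrt{|t|}$ but on every intermediate scale $|\mu|\sim\lambda\ll\sqrt{|t|}$; the cleanest way to get it is to Taylor-expand $a_j$ and its derivatives around the critical point $x/t$, which lies off $\supp a_j$ because $d>0$, giving $|\partial^\beta a_j(x/t+\mu/\sqrt{|t|})|\le C_K\bigl(|\mu|/\sqrt{|t|}\bigr)^K$, and then sum your integration by parts over dyadic shells in $|\mu|$; alternatively, apply Parseval and Taylor-expand the factor $e^{\pi i|\eta|^2/t}$, noting that each moment $\int|\eta|^{2k}e^{2\pi i(x/t)\cdot\eta}\widehat{a_j}(\eta)\,d\eta=c_k\,\Delta^k a_j(x/t)=0$. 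Either way one gets $|K_{j,t}(x)|\le C_K|t|^{-K}\le C_K' d^{-K}$ in that regime, since $d\lesssim|t|$ there, and with this adjustment your argument is complete.
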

\begin{proof}
If $\mathrm{dist}(\Lambda_{j,t},x) = 0$, by a trivial 
estimate we have
\begin{equation*} \label{ker1}
  \big| K_{j,t}(x) \big| \lesssim 1.
\end{equation*}
Suppose that $\mathrm{dist}(\Lambda_{j,t},x) > 0$. The
$\xi$-derivative of the phase $x\cdot \xi - 
\frac{1}{2}t|\xi|^2$ has 
\[
  \Big|\nabla_{\xi} \Big(x\cdot \xi - \frac{1}{2}t|\xi|^2 
  \Big)\Big| \ge \mathrm{dist}(\Lambda_{j,t},x)
\]
for all $\xi \in \tilde \Xi_j$.
So, using integration by parts we obtain
\begin{equation*}
  \big| K_{j,t}(x) \big| \le C_M
  \mathrm{dist}(\Lambda_{j,t},x)
  ^{-M}, \qquad \forall M>0.
\end{equation*}
Thus we have \eqref{ker_est}.
\end{proof}

\begin{lem} \label{englem}
Let $Q$ be a cube of sidelength $R^C$. Let $\epsilon>0$ and $j,k=1,2$ but $j \neq k$. Let $\pi_k \subset \Xi_k$ be a hypersurface in $\mathbb R^n$. Suppose that there is $0 \le \varepsilon_0 < 1$ such that for any $v \in \Xi_j$ and any $w, w' \in \pi_k$ with $w \neq w'$,
\begin{equation} \label{sepa}
  \Big|\Big\langle \frac{v-w}{|v-w|}, \frac{w'-w}{|w'-w|}
  \Big\rangle \Big|\le \varepsilon_0.
\end{equation}
Then for any $r \gtrsim  R^\epsilon$,
\begin{equation*}
  \| \phi_j \|_{L^2(Q \cap \Lambda_{\pi_k}(z_0;r))}
  \lesssim r^{1/2} E(\phi_j)^{1/2}.
\end{equation*}
\end{lem}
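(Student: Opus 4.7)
My plan is to prove the lemma via a wave packet decomposition of $\phi_j$ combined with a geometric transversality estimate, where throughout I set $S:=Q\cap\Lambda_{\pi_k}(z_0;r)$. The Fourier support constraint forces each wave packet of $\phi_j$ to propagate in a spacetime direction $(v,1)$ with $v\in\tilde\Xi_j$, while the conic set $\Lambda_{\pi_k}(z_0)$ is ruled by spacetime lines in the directions $(w,1)$ with $w\in\pi_k\subset\Xi_k$; the hypothesis \eqref{sepa} is precisely what quantifies the uniform transversality of these two families.

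Concretely, I would first perform a wave packet decomposition $\phi_j=\sum_T\phi_T$ at a scale $\rho$ satisfying $\rho^2\gtrsim R^C$, so that each tube $T$ of width $\rho$ and direction $v_T\in\tilde\Xi_j$ remains coherent across the whole cube $Q$. The analogues of \eqref{wavepack} and \eqref{l2sum} then give the pointwise bound $|\phi_T(x,t)|\lesssim h_T\rho^{-n/2}(1+\mathrm{dist}(T,(x,t))/\rho)^{-M}$ and the square-sum bound $\sum_T h_T^2\lesssim E(\phi_j)$.

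The heart of the argument is the volume estimate
\[
|T\cap\Lambda_{\pi_k}(z_0;r)|\lesssim r\rho^n.
\]
To see this, observe that the distance from the core of $T$ to $\Lambda_{\pi_k}(z_0)$, as a function of $t$, equals $\min_{w\in\pi_k}|x_T-z_0+(t-t_0)(v_T-w)|$. The condition \eqref{sepa} ensures that $v_T-w$ has a uniformly nontrivial component normal to $T_w\pi_k$ at any crossing point; combined with $|v_T-w|\sim 1$ (since $\tilde\Xi_j$ and $\tilde\Xi_k$ are separated), this forces the core of $T$ to stay within distance $r$ of $\Lambda_{\pi_k}(z_0)$ only on a time interval of length $\lesssim r+\rho\sim\rho$. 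On that interval each spatial slice $B(x_T+(t-t_0)v_T,\rho)$ meets the $r$-neighborhood of $(t-t_0)\pi_k+z_0$ in a set of $n$-dimensional measure $\lesssim r\rho^{n-1}$, and integration in $t$ produces the claimed bound.

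Combining these ingredients yields $\|\phi_T\|_{L^2(T\cap S)}^2\lesssim r h_T^2$, and summing over $T$ gives $\sum_T\|\phi_T\|_{L^2(S)}^2\lesssim rE(\phi_j)$. The desired conclusion then follows from the almost-orthogonality of the wave packets, which is inherited from the essentially disjoint Fourier disks around the directions $v_T$. The main technical obstacle is precisely this final step: restricting to the non-rectangular set $S$ does not preserve $L^2$-orthogonality automatically, and a Cotlar--Stein or $TT^*$ argument exploiting that wave packets with well-separated directions are concentrated on disjoint spacetime tubes is needed to control the cross terms without losing additional factors of $R^\epsilon$.
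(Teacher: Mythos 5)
Your outline correctly identifies the transversality mechanism: the computation showing that a line in a direction $v\in\Xi_j$ can stay within distance $O(r)$ of $\Lambda_{\pi_k}(z_0)$ only on a time interval of length $O(r)$ is exactly the geometric content of the paper's argument (where \eqref{sepa} is used to pick a unit vector $u$ with $u\perp(w'-w)$ and $|u\cdot(v-w)|\gtrsim 1$, forcing $|s-t|\lesssim r$ in \eqref{crosK}). But your proof has a genuine gap at the step you yourself flag: passing from the diagonal bound $\sum_T\|\phi_T\|_{L^2(S)}^2\lesssim rE(\phi_j)$ to $\|\phi_j\|_{L^2(S)}^2\lesssim rE(\phi_j)$. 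The mechanism you propose for the cross terms --- that "wave packets with well-separated directions are concentrated on disjoint spacetime tubes" --- is false: through any point of $S$ there passes one tube per direction, i.e.\ a bush of roughly $\rho^{n}$ overlapping tubes, and a crude Cauchy--Schwarz over directions loses exactly this cardinality. Nor can you recover orthogonality by smoothing the cutoff $\chi_S$ at the packet frequency scale $\rho^{-1}$: since typically $r\ll\rho$ (here $r\gtrsim R^\epsilon$ while any packet scale coherent over a cube of side $R^C$ has $\rho\geq R^{C/2}$), mollifying at scale $\rho$ fattens the $r$-neighborhood $\Lambda_{\pi_k}(z_0;r)$ to thickness $\rho$ and destroys the factor $r$ that the lemma is about. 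So the cross terms are not a routine technicality; they are the heart of the estimate, and your outline does not contain an argument for them.

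For comparison, the paper proves the lemma without any wave packet decomposition: it dualizes and runs a $TT^*$ argument in the time variable, writing the square of the norm as $\iint\mathcal I_jG_k(s,t)\,dt\,ds$ with $\mathcal I_jf(s,t)=\langle\mathcal U_j(s)\mathcal U_j(t)^*f(t),f(s)\rangle$. The kernel $K_{j,s-t}$ of $\mathcal U_j(s)\mathcal U_j(t)^*$ concentrates on the conic set $\Lambda_{j,s-t}(0;r)$ (Lemma \ref{lem:ker_est}), the hypothesis \eqref{sepa} makes the product of cutoffs \eqref{crosK} vanish unless $|s-t|\lesssim r$ (the tails are absorbed using $r\gtrsim R^\epsilon$ against the $R^C$ size of $Q$), and the near-diagonal region $|s-t|\lesssim r$ is handled by the trivial bound $|\mathcal I_jf(s,t)|\lesssim\|f(s)\|_2\|f(t)\|_2$ and integration in $s,t$, which is where the factor $r$ appears as in \eqref{esspaE}. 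In effect, the orthogonality you are missing is orthogonality in time coming from the dispersion relation, and it is exactly what the $TT^*$ formulation captures. If you want to keep a wave packet formulation you would still have to prove an estimate of this kind to control interactions between distinct directions on the thin set $S$, at which point the decomposition buys you nothing.
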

%
%
\begin{proof}
It suffices to show
\begin{equation*}
  \int \big\| \chi_{\Lambda^Q_{\pi_k}(z_0;r)}(t) [\mathcal U_j(t)] f
  \big\|_{L^2(\mathbb R^n)}^2 dt
  \lesssim r \|f\|_{L^2(\mathbb R^n)}^2,
\end{equation*}
where $\Lambda_{\pi_k}^Q = \Lambda_{\pi_k} \cap Q$.
By duality this is equivalent to
\begin{equation} \label{dualesti}
  \left\| \int
  \mathcal U_j(t)^*
  \big( \chi_{\Lambda^Q_{\pi_k}(z_0;r)}(t) g(t) \big)dt
  \right\|_{L^2(\mathbb R^n)}^2 \lesssim 
  r \|g\|_{L^2(\mathbb  R^{n+1})}^2.
\end{equation}
The left side of the above estimate is written as
\begin{equation*} \label{de1}
\iint \left\langle [\mathcal{U}_j(s)
\mathcal{U}_j(t)^{\ast}]  \big(
\chi_{\Lambda^Q_{\pi_k}(z_0;r)}(t) g(t) \big)
,~\chi_{\Lambda^Q_{\pi_k}(z_0;r)}(s)g(s) \right\rangle dtds. 
\end{equation*}
Let
\[
G_k(x,t) := \chi_{\Lambda^Q_{\pi_k}(z_0;r)}(x,t) g(x,t)
\]
and 
\[
\mathcal I_jf(s,t) := \left\langle [\mathcal{U}_j(s )
\mathcal{U}_j(t)^{\ast}] f(t),f(s) \right\rangle. 
\]
The previous integral is divided into two parts
\[
\iint \mathcal I_jG_{k}(s,t) dtds= \iint_{|s-t| \le C r} \mathcal I_jG_{k}(s,t) dtds + \iint_{|s-t| \ge C r} \mathcal I_jG_{k}(s,t) dtds.
\]
To show \eqref{dualesti} it suffices to prove
\begin{align} \label{errbb}
  \bigg| \iint_{|s-t| \ge C r} 
  \mathcal I_jG_{k}(s,t)
  dtds \bigg| 
  &\lesssim r^{-N}\|g\|_2^2 \\
\intertext{and}
\label{esspaE}
  \bigg| \iint_{|s-t| \le Cr} 
  \mathcal I_jG_{k}(s,t) 
  dtds \bigg| 
  &\lesssim r \|g\|_2^2.
\end{align} 

Consider \eqref{errbb}.
If we set
\[
  K_{j,t}(x) = \int e^{2\pi i (x \cdot \xi - \frac{1}{2}t|\xi|^2)}
  a_j^2(\xi) d\xi
\]
then
\[
  [\mathcal{U}_j(s) \mathcal{U}_j(t)^{\ast}] f(x)
  = \int K_{j,s-t}(x-y) f(y) dy.
\]
We rewrite $\mathcal I_j G_k$ as
\begin{equation*}
  \mathcal I_j G_k= \iint K_{j,s-t}(x-y) 
  \chi_{\Lambda^Q_{\pi_k}(z_0;r)}(y,t)
  \chi_{\Lambda^Q_{\pi_k}(z_0;r)}(x,s) g(y,t) 
  \overline{g(x,s)}
  dxdy.
\end{equation*}
We divide 
\[
  K_{j,t}= \chi_{\Lambda_{j,t}(0;r)} K_{j,t} + (1-\chi_{\Lambda_{j,t}(0;r)}) K_{j,t}.
\]
From Lemma \ref{lem:ker_est} it follows that
\[
  (1-\chi_{\Lambda_{j,s-t}(0;r)}(x))|K_{j,s-t}(x)| 
  \lesssim r^{-M}, \qquad \forall M>0.
\]
Using this we have
\begin{align*}
  &\bigg| \iiiint ((1-\chi_{\Lambda_{j,s-t}(0;r)})K_{j,s-t})(x-y) \\
  &\qquad\qquad  \times \chi_{\Lambda^Q_{\pi_k}(z_0;r)}(y,t)
  \chi_{\Lambda^Q_{\pi_k}(z_0;r)}(x,s) 
  g(y,t)  \overline{g(x,s)}  dxdydtds \bigg|  \\
  &\qquad\qquad\qquad 
  \lesssim r^{-M} \|\chi_{\Lambda^Q_{\pi_k}(z_0;r)} g\|_1^2 
  \\
  &\qquad\qquad\qquad  
  \lesssim r^{-M} R^{C}\|g\|_2^2   \\
  &\qquad\qquad\qquad 
  \lesssim r^{-M}\|g\|_2^2,  \qquad \forall M>0,
\end{align*}
where the last line follows from $r \gtrsim R^{\epsilon}$.
Now, to show \eqref{errbb} it suffices to show
\begin{equation*} 
\begin{split}
  &\iiiint_{|s-t| \ge C r} \chi_{\Lambda_{j,s-t}(0;r)}(x-y) K_{j,s-t}(x-y) \\
  &\qquad\qquad \times \chi_{\Lambda^Q_{\pi_k}(z_0;r)}(y,t)
  \chi_{\Lambda^Q_{\pi_k}(z_0;r)}(x,s) 
  g(y,t)  \overline{g(x,s)}  dxdydtds = 0.
\end{split}
\end{equation*}
It is enough to show that for $s,t \in \mathbb R$ with $|s-t| \ge Cr$, the equation
\begin{equation} \label{crosK} 
  \chi_{\Lambda_{j,s-t}(0;r)}(x-y) 
  \chi_{\Lambda^Q_{\pi_k}(z_0;r)}(y,t)
  \chi_{\Lambda^Q_{\pi_k}(z_0;r)}(x,s)
\end{equation}
vanishes.  Consider the contrapositive statement that if $\eqref{crosK}$ is nonzero then one has $|s-t| \lesssim r$. 
Suppose that $\eqref{crosK}$ is nonzero. Then, from the characteristic function $\chi_{\Lambda_{j,s-t}(0;r)}(x-y)$ we can restrict ourselves to the case
\begin{equation}\label{r1}
    x-y = (s-t)v + {O}(r)
\end{equation}
for some $v \in \Xi_j$. 
On the other hands,
from 
$\chi_{\Lambda_{\pi_k}(z_0;r)}(y,t)$ and
$\chi_{\Lambda_{\pi_k}(z_0;r)}(x,s)$ we also have
\begin{equation} \label{r2}
  \begin{split}
    x - x_0 &= (s-t_0)w+{O}(r) \\
    y - x_0 &= (t-t_0)w'+{O}(r)
  \end{split}
\end{equation}
for some $w,w'  \in \pi_k$. 
By combining \eqref{r1} and \eqref{r2} it follows that
\begin{equation} \label{r3}
  (s-t)(v-w) + (t-t_0)(w'-w) = O(r).
\end{equation}
If $w = w'$, then we have $|s-t| \lesssim r$. Otherwise, from
\eqref{sepa} we can see that there exists a unit vector $u$ such
that $(v-w) \cdot u \neq 0$ but $(w'-w)\cdot u = 0$. By taking inner
product with such $u$ for \eqref{r3}, we have $|s-t| \lesssim r$.

\medskip
Consider \eqref{esspaE}.
By the Cauchy--Schwarz inequality and Plancherel's theorem it follows that
\[
|\mathcal I_j f(s,t)| \lesssim \|f(s)\|_2 \|f(t)\|_2.
\]
By this and the Hardy--Littlewood--Sobolev inequality,
\begin{align*}
  \bigg| \iint_{|s-t| \lesssim r} 
  \mathcal I_jG_{k}(s,t) 
  dtds \bigg| 
&\lesssim \iint_{|s-t| \lesssim r} 
  \| g(s) \|_{L^2(\mathbb R^n)}
  \| g(t) \|_{L^2(\mathbb R^n)} dtds \\
&\lesssim r\|g\|_2^2.
\end{align*}
Thus we have \eqref{esspaE}.
\end{proof}

\subsection{A basic bilinear restriction estimate}
Let $\psi$ be a nonnegative Schwartz function on $\mathbb R^{n+1}$ with $\int \psi = 1$ such that $\widehat \psi$ is supported in the unit ball and 
\begin{equation} \label{stDec}
\sum_{k \in \mathbb Z^{n+1}} \psi^4(z-k) =1
\end{equation}
where $z = (x,t) \in \mathbb R^{n+1}$.
If $q$ is a cube of sidelength $r_q$ with center $z_q$, we define
\[
\psi_q(z) := \psi(r_q^{-1} (z-z_q)).
\]
For convenience, we use the notations $\int_{\psi} \cdot $ and $\| \cdot \|_{L^2(\psi)}$ to denote
\[
\int_{\psi} f := \int f \psi
\qquad \text{and} \qquad
\| f \|_{L^2(\psi)} := \| f \psi \|_{2}.
\]

\begin{lem} \label{lem:simBil}
Let $q$ be a cube of side-length $R^{1/2}$. 
Suppose that $\phi_{T_1}$ and $\phi_{T_2}$ are wave packets defined as \eqref{phiT}.
Then
\begin{equation} \label{simpleBilEst}
\|\phi_{T_1} \phi_{T_2} \|_{L^2({\psi_q^2})} \lesssim R^{-\frac{n+1}{4}}\|\phi_{T_1}\|_{L^2({\psi_q})} \|\phi_{T_2}\|_{L^2({\psi_q})}.
\end{equation}
\end{lem}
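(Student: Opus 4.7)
The plan is to prove \eqref{simpleBilEst} by a direct application of Bernstein's inequality in spacetime. The key observation is that the side length $R^{1/2}$ of $q$ matches the reciprocal of the frequency scale of each wave packet, so $\phi_{T_j}\psi_q$ is essentially a ``low-frequency'' function on $\mathbb R^{n+1}$ to which Bernstein applies directly. At the level of a single pair of wave packets, neither the transversality of $\Sigma_1,\Sigma_2$ nor the paraboloid curvature is needed; those features will enter only when summing over wave packets later.

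First I would pin down the spacetime Fourier support of $\phi_{T_2}\psi_q$. From \eqref{phiT} and \eqref{eqn:Uf}, the spacetime Fourier transform of $\phi_{T_j}$ is the measure $\hat f_{T_j}(\zeta)\,a_j(\zeta)\,\delta(\sigma+\tfrac12|\zeta|^2)$ on the paraboloid. Unwinding \eqref{eqn:f_T}, $\hat f_{T_j}$ is the convolution of $\widehat{\eta^{x(T_j)}}$ (supported in a ball of radius $O(c^2r^{-1})$ by the choice of $\eta$) with $\int_G \chi_{\Omega(B_{v_j})}\hat f_j\,d\Omega$ (supported in $B(v_j,O(r^{-1}))$), hence $\hat f_{T_j}$ is compactly supported in $B(v_j,Cr^{-1})$. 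Since $\widehat{\psi_q}$ is supported in $B(0,r_q^{-1})$ with $r_q=R^{1/2}\sim r$, the spacetime Fourier transform $\widehat{\phi_{T_2}}\ast\widehat{\psi_q}$ of $\phi_{T_2}\psi_q$ is a genuine function supported in the $O(r^{-1})$-neighborhood $\mathcal E$ of a paraboloid patch of $\zeta$-diameter $O(r^{-1})$, which has $(n+1)$-dimensional Lebesgue measure $|\mathcal E|\lesssim r^{-(n+1)}$.

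Then I would apply Bernstein's inequality via Cauchy--Schwarz on Fourier inversion and Plancherel,
\[
\|\phi_{T_2}\psi_q\|_\infty \le |\mathcal E|^{1/2}\,\|\widehat{\phi_{T_2}\psi_q}\|_2 \lesssim r^{-(n+1)/2}\,\|\phi_{T_2}\|_{L^2(\psi_q)},
\]
and combine with the trivial $L^2\cdot L^\infty \subset L^2$ bound
\[
\|\phi_{T_1}\phi_{T_2}\|_{L^2(\psi_q^2)}=\|(\phi_{T_1}\psi_q)(\phi_{T_2}\psi_q)\|_2 \le \|\phi_{T_1}\|_{L^2(\psi_q)}\,\|\phi_{T_2}\psi_q\|_\infty.
\]
With $r\sim R^{1/2}$ so that $r^{-(n+1)/2}\sim R^{-(n+1)/4}$, this yields \eqref{simpleBilEst}. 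I do not anticipate a substantive obstacle; the only technical point is the strict Fourier compactness of all the localizing bumps, which is built into the definitions of $\eta$, $\psi$ and the sharp multipliers $P_{\Omega,v_j}$, so no Schwartz-tail bookkeeping is needed.
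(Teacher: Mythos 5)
Your proof is correct, and it takes a slightly different elementary route than the paper's. Both arguments rest on the same key observation, which you verify carefully: $\widehat{\psi_q\phi_{T_j}}$ is a genuine function supported in a ball of radius $CR^{-1/2}$ about $(v(T_j),-|v(T_j)|^2/2)$ in $\mathbb R^{n+1}$, so at the level of a single pair of wave packets neither the curvature of the paraboloid nor the separation of $\mathit\Sigma_1$ and $\mathit\Sigma_2$ plays any role. The paper passes to the Fourier side via Plancherel, writes the left-hand side of \eqref{simpleBilEst} as $\|\widehat{\psi_q\phi_{T_1}}\ast\widehat{\psi_q\phi_{T_2}}\|_2$, and interpolates between Young's inequality ($L^1\times L^1\to L^1$) and the support-overlap bound $\|\chi_1\ast\chi_2\|_\infty\lesssim R^{-(n+1)/2}$ ($L^\infty\times L^\infty\to L^\infty$). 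You instead stay on the physical side: Bernstein's inequality (Cauchy--Schwarz on Fourier inversion over the support $\mathcal E$ with $|\mathcal E|\lesssim R^{-(n+1)/2}$) gives $\|\psi_q\phi_{T_2}\|_\infty\lesssim R^{-(n+1)/4}\|\phi_{T_2}\|_{L^2(\psi_q)}$, and then the $L^2\cdot L^\infty$ H\"older step finishes; the numerology $r^{-(n+1)/2}\sim R^{-(n+1)/4}$ is right, and your accounting of the exact compact Fourier supports coming from $\eta$, $\psi$ and the sharp cutoffs $P_{\Omega,v_j}$ is accurate, so no tail bookkeeping is needed. What each approach buys: yours avoids bilinear interpolation entirely and is arguably more direct, while the paper's Fourier-side formulation is symmetric in the two factors and records exactly the support information about $\widehat{\psi_q\phi_{T_j}}$ that is reused immediately afterwards in Lemma \ref{lem:vanish_inn}.
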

\begin{proof}
By Plancherel's theorem  the estimate \eqref{simpleBilEst} is equivalent to
\begin{equation*}
\| \widehat{\psi_q \phi_{T_1}} \ast \widehat{\psi_q \phi_{T_2}} \|_2 \lesssim R^{-\frac{n+1}{4}}\| \widehat{\psi_q \phi_{T_1}}\|_2 \| \widehat{\psi_q \phi_{T_2}}\|_2.
\end{equation*}
By interpolation it suffices to show the following two estimates:
\begin{align*}
\| \widehat{\psi_q \phi_{T_1}} \ast \widehat{\psi_q \phi_{T_2}} \|_1 &\le \| \widehat{\psi_q \phi_{T_1}}\|_1 \| \widehat{\psi_q \phi_{T_2}}\|_1, \\
\| \widehat{\psi_q \phi_{T_1}} \ast \widehat{\psi_q \phi_{T_2}} \|_\infty &\lesssim R^{-\frac{n+1}{2}}\| \widehat{\psi_q \phi_{T_1}}\|_\infty \| \widehat{\psi_q \phi_{T_2}}\|_\infty.
\end{align*}

By Young's inequality the first one is easily obtained.
Consider the second one. Observe that the Fourier support of $\psi_q \phi_{T_j}$ is contained in a ball $B(v(T_j),-{|v(T_j)|^2}/{2};C R^{-1/2})$.
Let $\chi_j := \chi_{B(v(T_j),-{|v(T_j)|^2}/{2};CR^{-1/2})}$ be a characteristic function. Then, 
\[
\| \widehat{\psi_q \phi_{T_1}} \ast \widehat{\psi_q \phi_{T_2}} \|_\infty \lesssim \| \chi_{1} \ast \chi_{2}\|_{\infty}\| \widehat{\psi_q \phi_{T_1}}\|_\infty \| \widehat{\psi_q \phi_{T_2}}\|_\infty.
\]
Simple computation gives
$\| \chi_{1} \ast \chi_{2}\|_{\infty} \lesssim R^{-\frac{n+1}{2}}$. Thus we have the second estimate. 
\end{proof}

\section{Refining the proof of the sharp bilinear restriction estimate}

In this section we refine the proof of the sharp bilinear paraboloid restriction estimate due to Tao \cite{t2}.

\subsection{Decomposition for bilinear estimates} 
Let $Q$ be the cube of sidelength $CR$ and centered at the origin. We decompose $Q$ into subcubes $\Delta$ of sidelength $2^{-C_0}CR$. For any integer $l$, we define $\mathcal Q_{l}(Q)$ to be the collection of subcubes of sidelength $2^{-l}CR$ which are obtained by dividing the sides of $Q$.


 Let 
\[ 
\Psi_{D(x_D;r_D)}(x) := \left( 1+ \frac{|x-x_D|}{r_D} \right)^{-N^{100}}.
\]
and for each tube $T_j = T_j^{(x_0,v_j)}$, 
\begin{equation} \label{def:X_T} 
\Psi_{T_j}(x,t) := \Psi_{D(x_0-tv_j;r)}(x).
\end{equation}
For each $2^{-C_0}CR$-cube $\Delta \in \mathcal{Q}_{C_0}(Q)$ and each $T_j \in \mathbf T_j$, we define  
\begin{equation}\label{eqn:weight}
  m_{\Delta,T_j} := \sum_{\substack{q \in \mathcal Q_\kappa(Q)  \\: q \subset \Delta}} \sum_{T_i \in \mathbf T_i} \| \phi_{T_i} \Psi_{T_j}
  \|_{L^2(\psi_q)}^2 + R^{-10n}E(\phi_i)
\end{equation}
and
\begin{equation} \label{WT}
  \quad m_{T_j}:= \sum_{{q \in \mathcal Q_\kappa(Q) }} \sum_{T_i \in \mathbf T_i} \|\phi_{T_i} \Psi_{T_j} \|_{L^2(\psi_{q})}^2  + R^{-10n}2^{(n+1)C_0}E(\phi_i)
\end{equation}
for $i \neq j$. Then,
\begin{equation} \label{weighT}
\sum_{\Delta \in \mathcal{Q}_{C_0}(Q)} \frac{m_{\Delta,T_j}}{m_{T_j}} = 1.
\end{equation}

We now define $\Phi_j^{(\Delta)}$ for each $\Delta \in \mathcal{Q}_{C_0}(Q)$ by
\begin{equation} \label{bush function}
  \Phi_j^{(\Delta)}(z) :=
  \sum_{T_j \in \mathbf{T}_j} \frac{m_{\Delta,T_j}}{m_{T_j}}
  \phi_{T_j}(z).
\end{equation}
Then,
\begin{equation} \label{bush decomp}
\phi_j(z) = \sum_{\Delta \in \mathcal{Q}_{C_0}(Q)} \Phi_j^{(\Delta)}(z).
\end{equation}
We define a function $[\Phi_j]$ by
\begin{equation} \label{core}
[\Phi_j](z) := \sum_{\Delta \in \mathcal{Q}_{C_0}(Q)} \Phi_j^{(\Delta)}(z)
\chi_{\Delta}(z).
\end{equation}



%

The main proposition of this section is as follows.
\begin{prop} \label{prop:tao}
Let $R \ge 2^{C_0}$ and $0< c \le 2^{-C_0}$. For any cube $Q$ we define a set $X(Q)$ by
\begin{equation*} 
  X(Q):= \bigcup_{\Delta \in \mathcal Q_{C_0}(Q)} (1-c)\Delta.
\end{equation*}
Suppose that $\phi_1$, $\phi_2$ have Fourier supports in $\mathit \Sigma_1$ and $\mathit \Sigma_2$ respectively which satisfy the normalization \eqref{eqn:energy assum} and the relaxed margin condition \eqref{Bmargin}. Then, 
\begin{equation} \label{tte}
\| \phi_1 \phi_2 \|_{L^p(Q_R)} \le (1+Cc) \|[\Phi_1][\Phi_2] \|_{L^p(X(Q))} + c^{-C}, 
\end{equation}
where $Q$ is a cube of sidelength $CR$ contained in $C^2Q_R$ and \begin{equation} \label{tteMar}
\mathrm{marg} (\Phi_j) \ge \frac{1}{100} - \Big(\frac{2^{C_0}}{R} \Big)^{1/N}.
\end{equation}
\end{prop}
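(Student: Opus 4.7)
The plan is to refine the proof of the sharp bilinear paraboloid restriction estimate of Tao \cite{t2}, being careful to track the dependence on the parameter $c$ so as to produce the multiplicative constant $(1+Cc)$ rather than a mere $O(1)$. That sharpening is what will later drive the endpoint induction of Section \ref{sec:EC}.

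I would begin by expanding $\phi_j = \sum_{T_j \in \mathbf{T}_j} \phi_{T_j}$ via Lemma \ref{lem:wavepack} and then, using the partition of unity \eqref{weighT}, reassemble this as $\phi_j = \sum_{\Delta \in \mathcal{Q}_{C_0}(Q)} \Phi_j^{(\Delta)}$, where each wave packet is re-weighted by the fraction $m_{\Delta,T_j}/m_{T_j}$ of its transversal-interaction energy with the other family that lies inside the subcube $\Delta$. Since $Q$ essentially contains $Q_R$, I can dominate
\begin{equation*}
\|\phi_1 \phi_2\|_{L^p(Q_R)}^p \le \sum_{\Delta \in \mathcal{Q}_{C_0}(Q)} \|\phi_1 \phi_2\|_{L^p(\Delta)}^p,
\end{equation*}
and split each $\Delta$ into the core $(1-c)\Delta$ and the thin boundary strip $\Delta \setminus (1-c)\Delta$. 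The boundary strip has measure $O(c)|\Delta|$, and by H\"older combined with the trivial bound \eqref{trivial} it contributes at most $O(c^{1/p} R^C)$, absorbed into the additive error $c^{-C}$ of \eqref{tte}.

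On each core $(1-c)\Delta$, the task is to replace $\phi_1 \phi_2$ by $\Phi_1^{(\Delta)} \Phi_2^{(\Delta)}$. To do so, I would refine $\Delta$ into $R^{1/2}$-cubes $q \in \mathcal{Q}_\kappa(Q)$ (per \eqref{kapa}), apply Lemma \ref{lem:simBil} on each $q$ to pairs of wave packets to harvest the local bilinear gain $R^{-(n+1)/4}$, and then combine $L^2$-orthogonality of the wave packets on each $q$, H\"older in the spatial variable (using $|q| \sim R^{(n+1)/2}$), and $\ell^p$-summation over $q \subset \Delta$. This reproduces Tao's endpoint bilinear bound at $p = \tfrac{n+3}{n+1}$, now expressed through the reweighted objects $\Phi_j^{(\Delta)}$. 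The $R^{-10n} E(\phi_i)$ regularization in \eqref{eqn:weight} keeps all denominators $m_{T_j}$ safely away from zero throughout.

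The main obstacle is extracting the sharp factor $(1+Cc)$ rather than some unspecified $O(1)$ constant. The mechanism is that a wave packet $\phi_{T_j}$ for which $1 - m_{\Delta,T_j}/m_{T_j}$ is not small has most of its transversal-interaction energy in subcubes other than $\Delta$; by the Schwartz decay \eqref{wavepack} such a tube either sits physically away from $(1-c)\Delta$, in which case its contribution is absorbed into $c^{-C}$ after restriction to the $c$-shrunk core, or it grazes $\Delta$ within its $c$-boundary layer, in which case the restriction to $(1-c)\Delta$ introduces at most a multiplicative loss of $Cc$. Summing these estimates in $\ell^p$ over $\Delta$ produces \eqref{tte}. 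Finally, the margin bound \eqref{tteMar} is immediate from \eqref{Tmargin}, because $\Phi_j^{(\Delta)}$ is a bounded combination of wave packets whose Fourier supports lie in an $O(r^{-1})$-neighborhood of $\supp(\widehat{\phi_j})$, and $r \sim R^{1/2}$ together with $R \ge 2^{C_0}$ gives $Cr^{-1} \le (2^{C_0}/R)^{1/N}$.
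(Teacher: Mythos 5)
There is a genuine gap, and it occurs at the very first step. You discard the boundary strips $\Delta\setminus(1-c)\Delta$ by combining their measure $O(c)|\Delta|$ with the trivial bound \eqref{trivial}, claiming a contribution $O(c^{1/p}R^{C})$ that is ``absorbed into the additive error $c^{-C}$''. It cannot be: the additive error in \eqref{tte} must be $c^{-C}$ \emph{uniformly in $R$}, since the whole induction on scales in Section \ref{sec:EC} rests on $\mathcal K(R)\le 2^{CC_0}$ with no $R$-dependence, and $c^{1/p}R^{C}$ is not $\lesssim c^{-C}$ for large $R$. The paper avoids this entirely: it never throws the strips away by a size estimate, but invokes the averaging Lemma \ref{averagelem} (Tao's Lemma 6.1 in \cite{t1}), which chooses the \emph{position} of the grid cube $Q$ so that restricting $\phi_1\phi_2$ to $X(Q)$ costs only the multiplicative factor $(1+Cc)$. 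This also means your attribution of the $(1+Cc)$ factor to tubes ``grazing'' the $c$-boundary layer is not how the constant actually arises; it comes from the grid-averaging, and everything after that is an additive $c^{-C}$ error.

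The second gap is that the comparison of $\phi_1\phi_2$ with $[\Phi_1][\Phi_2]$ on the cores is never actually proved. After the averaging step the paper writes, by the triangle inequality,
\begin{equation*}
\| \phi_1\phi_2 \|_{L^p(X(Q))}\le \| [\Phi_1][\Phi_2] \|_{L^p(X(Q))}+\| (\phi_1-[\Phi_1]) \phi_2 \|_{L^p(X(Q))}+\| [\Phi_1](\phi_2-[\Phi_2]) \|_{L^p(X(Q))},
\end{equation*}
and the substance of the proof is the bound $\|(\phi_1-[\Phi_1])\phi_2\|_{L^p(X(Q))}\lesssim c^{-C}$, obtained by interpolating an $L^1$ bound (Lemma \ref{lem:L1}) with the $L^2$ bound \eqref{L2}. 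The $L^2$ bound is where all the hard structure enters: for $q$ with $\mathrm{dist}(q,\Delta)\ge cR$ one uses the resonance condition \eqref{geo condition} and Lemma \ref{lem:vanish_inn} (orthogonality of quadruples of wave packets), the observation that the admissible directions $v(T_1')$ lie on the sphere $\sigma(v_1,v_2)$, the transversality statement \eqref{transv} feeding into the conic-set energy estimate of Lemma \ref{englem} to prove \eqref{geometric1}, and Lemma \ref{lem:simBil} together with the weights $m_{T_1}$ to prove \eqref{SS1}. Your fourth paragraph replaces this entire mechanism with the heuristic that a tube with $m_{\Delta,T_j}/m_{T_j}$ not close to $1$ either ``sits away from $(1-c)\Delta$'' or ``grazes'' it; that dichotomy is not quantified, does not interact with the weights as defined in \eqref{eqn:weight}--\eqref{WT}, and in particular never uses the paraboloid-specific orthogonality and the transversal Kakeya-type counting without which the needed gain $R^{-\frac{n-1}{4}}$ in \eqref{L2} is unavailable. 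The margin claim \eqref{tteMar} is fine (it follows from \eqref{Tmargin} as in the paper), but the two gaps above mean the proposal does not establish \eqref{tte}.
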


In the remaining parts of this section we will prove the above proposition. 
Consider the margin \eqref{tteMar}. From \eqref{Tmargin} and \eqref{bush function} it follows that
\begin{align*}
\mathrm{marg}(\Phi_j) &\ge \mathrm{marg}(\phi_j) - CR^{-1/2} \\
&\ge \frac{1}{100} - 2\Big( \frac{1}{R}\Big)^{1/N} - C\Big( \frac{1}{R}\Big)^{1/2} \\
&\ge \frac{1}{100} - \Big(\frac{2^{C_0}}{R} \Big)^{1/N}.
\end{align*}
The proof of  \eqref{tte} is accomplished through  many steps.
We begin with the following averaging lemma.
\begin{lem}[Lemma 6.1 in \cite{t1}] \label{averagelem}
  Let $R>0$, $0< c \le 2^{-C_0}$, and let $Q_R$ be a cube of sidelength $R$.
  If $f$ is a smooth function, then there exists a cube $Q$ of
  sidelength $CR$ contained in $C^2Q_R$ such that
  \[
  \| f \|_{L^p(Q_R)} \le (1+Cc) \| f \|_{L^p(X(Q))}.
  \]
\end{lem}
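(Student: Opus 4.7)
I would prove this by a standard averaging (pigeonhole) argument over random translates of the grid defining the subcubes of $Q$. The heuristic is that, since $X(Q)$ is the union of the $(1-c)$-shrunken copies of the subcubes $\Delta \in \mathcal Q_{C_0}(Q)$, the fraction of each subcube missing from $X(Q)$ is only $1-(1-c)^{n+1} \le Cc$. If we randomise the position of the grid so that the location of each point $z \in Q_R$ inside its containing subcube is uniform, only an $O(c)$ fraction of the $L^p$-mass of $f$ can be lost on average, and a pigeonhole step then selects a good translate.

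Concretely, I would fix a large constant $C$ (independent of $c$) and write $\ell := 2^{-C_0}CR$ for the common subcube side length. For $y \in [0,\ell)^{n+1}$, let $Q_y$ be the translate by $y$ of a fixed cube of side length $CR$ co-centered with $Q_R$; since $C$ is large we have $Q_R \subset Q_y \subset C^2 Q_R$ for every such $y$. For fixed $z \in Q_R$, as $y$ varies uniformly over $[0,\ell)^{n+1}$ the position of $z$ inside its containing subcube $\Delta(y,z) \in \mathcal Q_{C_0}(Q_y)$ is uniform in $\Delta(y,z)$, so
\begin{equation*}
\frac{1}{\ell^{n+1}}\, \bigl|\{\,y \in [0,\ell)^{n+1} : z \notin X(Q_y)\,\}\bigr| \;=\; 1-(1-c)^{n+1} \;\le\; Cc.
\end{equation*}
Multiplying by $|f(z)|^p$, integrating in $z \in Q_R$, and applying Fubini then gives
\begin{equation*}
\frac{1}{\ell^{n+1}}\int_{[0,\ell)^{n+1}} \|f\|_{L^p(Q_R \setminus X(Q_y))}^{p}\, dy \;\le\; Cc\,\|f\|_{L^p(Q_R)}^p.
\end{equation*}

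By pigeonhole there exists $y^\ast$ with $\|f\|_{L^p(Q_R \setminus X(Q_{y^\ast}))}^p \le Cc\|f\|_{L^p(Q_R)}^p$. Setting $Q := Q_{y^\ast}$ and using $Q_R \cap X(Q) \subset X(Q)$ yields $\|f\|_{L^p(X(Q))}^p \ge (1-Cc)\|f\|_{L^p(Q_R)}^p$; taking $p$-th roots and using $(1-Cc)^{-1/p} \le 1+C'c$ (valid since $c \le 2^{-C_0}$ is small) gives the desired $\|f\|_{L^p(Q_R)} \le (1+C'c)\|f\|_{L^p(X(Q))}$. I anticipate no real obstacle here; the only thing worth checking is that the constant $C$ in the side length $CR$ of $Q$ can be chosen independently of $c$, which is automatic from the factor-$C$ slack in the containment $Q \subset C^2 Q_R$. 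This is essentially the argument of Lemma~6.1 of \cite{t1}, to which the statement refers.
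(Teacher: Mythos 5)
Your averaging-over-translates argument is correct, and it is essentially the standard proof of this lemma: the paper itself gives no proof but cites Lemma~6.1 of Tao's endpoint cone paper, whose argument is exactly this randomization of the grid, the observation that each point lies outside the shrunken subcubes for only an $O(c)$ fraction of translates, Fubini, and a pigeonhole selection of a good translate. The bookkeeping in your write-up (uniformity of the position of $z$ in its subcube, $Q_R \subset Q_y \subset C^2Q_R$ since $\ell = 2^{-C_0}CR \ll R$, and the passage from $(1-Cc)^{-1/p}$ to $1+C'c$) is all sound.
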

By this lemma,
\[
\| \phi_1\phi_2 \|_{L^p(Q_R)} \le (1+Cc)\| \phi_1\phi_2 \|_{L^p(X(Q))}.
\]
Using the triangle inequality we divide $\|\phi_1\phi_2\|_{L^p(X(Q))}$ into three parts:
\begin{align*}
\| \phi_1\phi_2 \|_{L^p(X(Q))} 
&\le \| [\Phi_1][\Phi_2] \|_{L^p(X(Q))}  \\
&\qquad + \| (\phi_1-[\Phi_1]) \phi_2 \|_{L^p(X(Q))}  + \| [\Phi_1](\phi_2-[\Phi_2]) \|_{L^p(X(Q))}.
\end{align*}
To prove \eqref{tte} it suffices to show
\begin{align}
\|(\phi_1-[\Phi_1]) \phi_2\|_{L^p(X(Q))}
&\lesssim c^{-C}, \label{estimate part} \\
\|[\Phi_1](\phi_2-[\Phi_2])\|_{L^p(X(Q))} &\lesssim c^{-C}.
\nonumber
\end{align}
Since these two estimates are similarly obtained, we will consider only the first one. 

\subsection{$L^1$-bilinear estimates}
\begin{lem}  \label{lem:L1} 
\[
\| (\phi_1-[\Phi_1]) \phi_2 \|_{L^1(X(Q))} \lesssim c^{-C}R .   
\]
\end{lem}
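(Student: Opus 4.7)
The plan is to reduce the $L^1$ bound to an $L^2 \times L^2$ Cauchy--Schwarz estimate, and then to control the tail $\phi_1 - [\Phi_1]$ in $L^2$ by a wave-packet Bessel inequality, exploiting that the coefficients $a_{T_1,\Delta} := m_{\Delta,T_1}/m_{T_1}$ appearing in the definition \eqref{bush function} of $\Phi_1^{(\Delta)}$ lie in $[0,1]$ (by \eqref{weighT}). Concretely, Cauchy--Schwarz gives
\[
\|(\phi_1 - [\Phi_1])\phi_2\|_{L^1(X(Q))} \le \|\phi_1 - [\Phi_1]\|_{L^2(Q)}\,\|\phi_2\|_{L^2(Q)},
\]
and the normalization $E(\phi_j)=1$ from \eqref{eqn:energy assum}, together with Plancherel, yields $\|\phi_j(t)\|_{L^2(\mathbb R^n)}^2 = 1$ at every $t$, so integrating over the $O(R)$-long time slab of $Q$ produces $\|\phi_j\|_{L^2(Q)} \lesssim R^{1/2}$ for $j=1,2$. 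By the triangle inequality it therefore suffices to show $\|[\Phi_1]\|_{L^2(Q)} \lesssim c^{-C}R^{1/2}$.

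For this, the disjointness of the supports of $\{\chi_\Delta\}_{\Delta \in \mathcal Q_{C_0}(Q)}$ lets me write
\[
\|[\Phi_1]\|_{L^2(Q)}^2 = \sum_{\Delta \in \mathcal Q_{C_0}(Q)} \|\Phi_1^{(\Delta)}\|_{L^2(\Delta)}^2,
\]
and it remains to bound $\|\Phi_1^{(\Delta)}(t)\|_{L^2(\mathbb R^n)}^2 \lesssim 1$ uniformly in $t$. This is a wave-packet Bessel inequality: Plancherel together with \eqref{FForm} reduces the claim to $\|\sum_{T_1} a_{T_1,\Delta} f_{T_1}\|_{L^2}^2 \lesssim 1$, which in turn follows from (i) the pointwise Cauchy--Schwarz inequality applied to the nonnegative partition of unity $\sum_{x_0}\eta^{x_0} = 1$ from \eqref{spaceDecom}, (ii) the $L^2$-orthogonality of the frequency pieces $\{\int_G P_{\Omega, v_1} f_1 \, d\Omega\}_{v_1}$ (whose Fourier supports $\{B_{v_1}\}$ form a disjoint partition of $\tilde\Xi_1$), and (iii) the bound $a_{T_1,\Delta}\in[0,1]$, which telescopes the resulting sum down to $\|f_1\|_2^2 = 1$. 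Time-integrating over the slab of $\Delta$ (of length $\sim 2^{-C_0}R$) and summing over the $2^{(n+1)C_0} \le c^{-C}$ cubes $\Delta \in \mathcal Q_{C_0}(Q)$ produces $\|[\Phi_1]\|_{L^2(Q)}^2 \lesssim c^{-C}R$, as desired.

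The main technical subtlety is verifying that the coarse spatial cutoff $\eta^{x_0}$ at scale $c^{-2}r$ does not spoil the frequency separation at scale $r^{-1}$ between distinct velocities $v_1 \in V_1$. Since $c^{-2}r \gg r$, the uncertainty principle introduces only a Fourier blur of size $\sim c^2/r \ll r^{-1}$, so the $v_1$-localized pieces remain essentially disjoint in frequency and the Bessel-type inequality goes through with merely a bounded overlap constant, rather than a genuine $c^{-C}$ loss at this step. The remaining $c^{-C}$ and $2^{CC_0}$ factors come from routine bookkeeping of the number of cubes $\Delta$, and the triangle inequality comparing $\phi_1$ with $[\Phi_1]$ yields the claimed $c^{-C}R$ bound.
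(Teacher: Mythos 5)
Your argument is correct, and its skeleton coincides with the paper's: Cauchy--Schwarz on $X(Q)\subset Q$, the energy-conservation bound $\|\phi_j\|_{L^2(Q)}\lesssim R^{1/2}$, the triangle inequality reducing matters to $\|[\Phi_1]\|_{L^2(Q)}\lesssim c^{-C}R^{1/2}$, and the slab-by-slab time integration over the cubes $\Delta$. Where you diverge is in the key $L^2$ estimate for $[\Phi_1]$: the paper bounds the \emph{sum} $\sum_{\Delta}E(\Phi_1^{(\Delta)})\lesssim c^{-C}E(\phi_1)$ by expanding in wave packets and invoking the pointwise bound \eqref{wavepack}, the weight normalization \eqref{weighT} (so that $\sum_\Delta (m_{\Delta,T_1}/m_{T_1})^2\le 1$), and the coefficient square-sum \eqref{l2sum}; you instead bound each $E(\Phi_1^{(\Delta)})\lesssim E(\phi_1)=1$ individually by a Bessel-type argument — pointwise domination via the partition $\sum_{x_0}\eta^{x_0}=1$ from \eqref{spaceDecom} together with $m_{\Delta,T_1}/m_{T_1}\in[0,1]$, then almost-orthogonality in the velocity variable — and absorb the cube count $2^{(n+1)C_0}\le c^{-C}$ at the end. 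Your route is in effect a crude, $O(1)$-loss version of the paper's later Lemma \ref{tube_sum} (which sharpens exactly this Bessel inequality to constant $1+Cc$), so it is self-consistent with the paper's machinery and even gives a per-cube statement the paper does not need here; the paper's route, by contrast, recycles Lemma \ref{lem:wavepack} and avoids redoing any orthogonality. Two small imprecisions to note, neither fatal: the Fourier support of $\int_G P_{\Omega,v_1}f_1\,d\Omega$ is not $B_{v_1}$ itself but an $O(r^{-1})$-neighborhood of it (the translations $\Omega_w$ range over $|w|\le r^{-1}$, comparable to — not much smaller than — the separation of $V_1$), so you get bounded overlap rather than disjointness, which is indeed all your argument uses; and you implicitly normalize $f_1=\phi_1(0)$ so that $\|f_1\|_2^2=E(\phi_1)$, the same tacit choice the paper makes when passing from \eqref{l2sum} to $E(\phi_j)$.
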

\begin{proof}
By the Cauchy--Schwarz inequality it suffices to show  
\begin{align}
  \| \phi_j \|_{L^2(Q)} &\lesssim R^{1/2}, \label{tr} \\
  \| \phi_j-[\Phi_j] \|_{L^2(Q)} &\lesssim c^{-C} R^{1/2},
  \label{mma} 
\end{align}
for \( j=1,2. \)
Consider \eqref{tr}. 
We have
\[
  \| \phi_j \|_{L^2(Q)}^2 \le
  \int_{- CR}^{CR} \| \phi_j(t) \|_{2}^2 dt
  \lesssim R E(\phi_j) \le R.
\]
Thus \eqref{tr} follows. 

Consider \eqref{mma}. By the triangle inequality and \eqref{tr} 
it suffices to show 
\begin{equation} \label{mma2}
  \|[\Phi_j]\|_{L^2(Q)} \lesssim c^{-C} R^{1/2}.
\end{equation}
We have that 
\[
   \| \Phi_j^{(\Delta)} \|_{L^2(\Delta)}^2  \le 
   \int_{t_{\Delta}-C2^{-C_0}R}^{t_{\Delta}+C2^{-C_0}R} 
   \| \Phi_j^{(\Delta)}(t) \|_{2}^2 dt 
   \lesssim 2^{-C_0}R  E(\Phi_j^{(\Delta)}).
\]
By using \eqref{wavepack},
\begin{align*}
\sum_{\Delta \in \mathcal Q_{C_0}(Q)} E(\Phi_j^{(\Delta)})  &=
\sum_{\Delta \in \mathcal Q_{C_0}(Q)} \Big\| \sum_{T_j \in \mathbf{T}_j} \frac{m_{\Delta,T_j}}{m_{T_j}}
\phi_{T_j}(0) \Big\|_2^2  \\
&\lesssim c^{-C}\sum_{\Delta \in \mathcal Q_{C_0}(Q)}  \sum_{T_j \in \mathbf{T}_j} \Big(\frac{m_{\Delta,T_j}}{m_{T_j}} \Big)^2 h_{T_j}^2.
\end{align*}
By \eqref{weighT} and \eqref{l2sum},
\begin{align*}
\sum_{\Delta \in \mathcal Q_{C_0}(Q)} E(\Phi_j^{(\Delta)})&\lesssim  c^{-C} \sum_{T_j \in \mathbf{T}_j}  h_{T_j}^2 \\
&\lesssim c^{-C} E(\phi_j).
\end{align*}
By the above estimates,
\[
  \|[\Phi_j]\|^2_{L^2(Q)} = \sum_{\Delta \in \mathcal Q_{C_0}(Q)} 
  \| \Phi_j^{(\Delta)} \|_{L^2(\Delta)}^2  \lesssim 
  2^{-C_0}c^{-C}R E(\phi_j) \le c^{-C}R.
\]
Thus we have \eqref{mma2}.
\end{proof}

\subsection{Orthogonality} \label{subsec:orth}

By interpolation it now suffices to show 
\begin{equation} \label{L2}
  \| (\phi_1-[\Phi_1]) \phi_2 \|_{L^2(X(Q))}
  \lesssim c^{-C} R^{-\frac{n-1}{4}}.
\end{equation}

By \eqref{bush decomp}, \eqref{core} and the triangle inequality, 
  \[
    \| (\phi_1-[\Phi_1]) \phi_2 \|_{L^2(X(Q))}
    \le  \sum_{\Delta \in \mathcal{Q}_{C_0}(Q)}
    \| \Phi_1^{(\Delta)} \phi_2 \|_{L^2(X(Q) \setminus \Delta)}.
  \]
Since the number of cubes in $\mathcal{Q}_{C_0}(Q)$ is
$2^{(n+1)C_0} \lesssim c^{-C}$, it is reduced to showing
\begin{equation} \label{WolffBush}
  \| \Phi_1^{(\Delta)}\phi_2 \|_{L^2(X(Q) \setminus \Delta)}
  \lesssim c^{-C} R^{-\frac{n-1}{4}}.
\end{equation}
Observe that if $q \in \mathcal Q_{\kappa}(Q)$ meets $X(Q)
\setminus \Delta$, then $\mathrm{dist}(q,\Delta) \ge cR $. 
By using  \eqref{stDec}, \eqref{wavepacket decomposition} and \eqref{bush function},
\begin{align} \label{eqn:AF}
\| \Phi_1^{(\Delta)}\phi_2 \|_{L^2(X(Q) \setminus \Delta)}^2
&\lesssim \sum_{\substack{
q \in \mathcal{Q}_{\kappa}(Q): \\
\mathrm{dist}(q,\Delta) \ge cR}}
\| \Phi_1^{(\Delta)}\phi_2 \|_{L^2(\psi_q^2)}^2 \nonumber \\
&= \sum_{\substack{
q \in \mathcal{Q}_{\kappa}(Q): \\
\mathrm{dist}(q,\Delta) \ge cR}}
\int_{\psi_q^4} \bigg| \sum_{T_1 \in \mathbf T_1}
\frac{m_{\Delta,T_1}}{m_{T_1}}\phi_{T_1}(z)
\sum_{T_2 \in \mathbf T_2} \phi_{T_2}(z) \bigg|^2 dz 
\end{align}
where $z$ denotes $(x,t)$.
We write the integral in the above equation as 
\begin{equation} \label{innerForm}
  \sum_{\substack{T_1,T_1' \in \mathbf T_1,\\
                    T_2,T_2' \in \mathbf T_2}}
  \int_{\psi_q^4} \bigg(
  \frac{m_{\Delta,T_1}}{m_{T_1}} \phi_{T_1}(z) \phi_{T_2}(z) \bigg)
  \bigg(\frac{m_{\Delta,T_1'}}{m_{T_1'}} \overline{\phi}_{T_1'}(z)
  \overline{\phi}_{T_2'}(z) \bigg) dz.
\end{equation}

We define $S$ to be the set of $(v_1,v_1',v_2,v_2') \in V_1 \times V_1 \times V_2 \times V_2$ such that 
\begin{equation} \label{geo condition}
\begin{aligned}
  v_1+v_2 &= v_1' + v_2'  + O(r^{-1}), \\
  |v_1|^2+|v_2|^2 &= |v_1'|^2+|v_2'|^2 + O(r^{-1}).
\end{aligned}
\end{equation}

\begin{lem} \label{lem:vanish_inn}
Let $S^c = V_1 \times V_1 \times V_2 \times V_2 
\setminus S$ and let $q$ be a cube of side-length 
$R^{1/2}$. Suppose that $T_1, T_1' \in \mathbf T_1$, $T_2, T_2' \in
\mathbf T_2$ satisfy $(v(T_1),v(T_1'),v(T_2),v(T_2')) \in
S^c$. Then,
\begin{equation} \label{vaniTerm}
  \bigg| \int_{\psi_q^4} 
  \phi_{T_1}(z) \phi_{T_2}(z) 
  \overline\phi_{T_1'}(z)
  \overline\phi_{T_2'}(z) dz  \bigg| =0.
\end{equation}
\end{lem}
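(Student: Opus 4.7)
The plan is to establish the vanishing via Plancherel orthogonality on the spacetime Fourier side. First I would rewrite the integral as a spacetime $L^2$ inner product,
\[
\int \psi_q^4\, \phi_{T_1}\phi_{T_2}\,\overline{\phi_{T_1'}\phi_{T_2'}}\, dz = \big\langle \psi_q^2 \phi_{T_1}\phi_{T_2},\ \psi_q^2 \phi_{T_1'}\phi_{T_2'}\big\rangle_{L^2(\mathbb R^{n+1})},
\]
and then apply Plancherel to reduce the claim to showing that the spacetime Fourier supports of $\psi_q^2 \phi_{T_1}\phi_{T_2}$ and $\psi_q^2 \phi_{T_1'}\phi_{T_2'}$ are disjoint whenever $(v(T_1),v(T_1'),v(T_2),v(T_2'))\in S^c$.

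Next I would track the Fourier supports through the wave packet construction. From \eqref{eqn:f_T}, each $P_{\Omega,v_j}f_j$ has Fourier support in $\Omega(B_{v_j})\subset B(v_j,Cr^{-1})$, and multiplication by $\eta^{x_0}$ convolves by a bump of Fourier width $O(c^2r^{-1})\le O(r^{-1})$, so $\widehat{f_{T_j}}\subset B(v(T_j),Cr^{-1})$. Combining this with \eqref{FForm}, the spacetime Fourier support of $\phi_{T_j}$ lies on the paraboloid piece $\{(\xi,-|\xi|^2/2):\xi\in B(v(T_j),Cr^{-1})\}$. Because $q$ has sidelength $R^{1/2}\sim r$, the symbol $\widehat{\psi_q}$ is supported in a ball of radius $O(r^{-1})$. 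Hence
\[
\widehat{\psi_q^2\phi_{T_1}\phi_{T_2}}=\widehat{\psi_q}\ast\widehat{\psi_q}\ast\widehat{\phi_{T_1}}\ast\widehat{\phi_{T_2}}
\]
is supported in an $O(r^{-1})$-neighborhood of
\[
\big\{(\xi_1+\xi_2,\ -\tfrac12(|\xi_1|^2+|\xi_2|^2)): \xi_j\in B(v(T_j),Cr^{-1})\big\},
\]
which, using $|v(T_j)|\lesssim 1$ (so $|\xi_j|^2=|v(T_j)|^2+O(r^{-1})$), sits in $B(v(T_1)+v(T_2),Cr^{-1})$ in the spatial variable and in an interval of length $O(r^{-1})$ about $-\tfrac12(|v(T_1)|^2+|v(T_2)|^2)$ in the temporal variable. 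The same description applies to $\widehat{\psi_q^2\phi_{T_1'}\phi_{T_2'}}$ with the primed velocities.

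For these two spacetime Fourier supports to overlap, one needs both
\[
v(T_1)+v(T_2)=v(T_1')+v(T_2')+O(r^{-1})\quad\text{and}\quad |v(T_1)|^2+|v(T_2)|^2=|v(T_1')|^2+|v(T_2')|^2+O(r^{-1}),
\]
which are exactly the defining conditions \eqref{geo condition} of $S$. Under the hypothesis $(v(T_1),v(T_1'),v(T_2),v(T_2'))\in S^c$ one of these fails, the supports are disjoint, and \eqref{vaniTerm} follows. I do not expect a real obstacle: this is a standard Fefferman--C\'ordoba style bilinear orthogonality, and the only item requiring care is the bookkeeping that the three sources of Fourier spread (the wave packet localization, the spatial cutoff $\eta^{x_0}$, and the cube cutoff $\psi_q$) all produce the same scale $O(r^{-1})$ thanks to the choice $r\sim R^{1/2}$, so that the two conditions of $S$ are the sharp obstruction to non-disjointness at precisely this scale.
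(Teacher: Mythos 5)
Your argument is correct and is essentially the paper's own proof: Parseval/Plancherel turns the integral into an inner product of two functions whose spacetime Fourier supports lie in $O(r^{-1})$-neighborhoods of $\big(v(T_1)+v(T_2),-\tfrac12(|v(T_1)|^2+|v(T_2)|^2)\big)$ and its primed counterpart, and the conditions \eqref{geo condition} defining $S$ are exactly what is needed for these neighborhoods to intersect. The only difference is cosmetic bookkeeping (you attach $\psi_q^2$ to each product, the paper attaches one $\psi_q$ to each wave packet), so no further comment is needed.
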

\begin{proof}
By Parseval's formula the left side of \eqref{vaniTerm} equals to
\begin{equation*}
  \Big| \langle \widehat{
  \psi_q\phi_{T_1}} \ast \widehat{\psi_q\phi_{T_2}} ,~
  \widehat{\psi_q\phi_{T_1'}} \ast
  \widehat{\psi_q\phi_{T_2'}} \rangle \Big| ,
\end{equation*}
where the hat $\widehat{\quad}\, $ denotes the spacetime Fourier 
transform.

By the construction of wave packets $\phi_{T_j}$, we see that $\widehat{\psi_q\phi_{T_j}}$ is supported on a $O(r^{-1})$-neighborhood of $(v(T_j), -|v(T_j)|^2/2)$, and that
\( \widehat{\psi_q\phi_{T_1}} \ast \widehat{\psi_q\phi_{T_2}}\) 
is supported on a $O(r^{-1})$-neighborhood of
$\big(v(T_1)+v(T_2),~-(|v(T_1)|^2+|v(T_2)|^2)/2 \big)$. 
Thus we can see that if $(v(T_1),v(T_1'),v(T_2),v(T_2'))$ 
does not satisfy \eqref{geo condition} then the supports of $ \widehat{\psi_q\phi_{T_1}} \ast \widehat{\psi_q\phi_{T_2}}$ and $\widehat{\psi_q\phi_{T_1'}} \ast
\widehat{\psi_q\phi_{T_2'}}$ are disjoint, so we have \eqref{vaniTerm}.
\end{proof}
By the above lemma,
\begin{equation} \label{restrT}
(\ref{innerForm}) \le  
  \sum_{(T_1,T_1',T_2,T_2') \in \mathbf S}
  \bigg|\int_{\psi_q^4} \bigg(
  \frac{m_{\Delta,T_1}}{m_{T_1}} \phi_{T_1} \phi_{T_2} \bigg)
  \bigg(\frac{m_{\Delta,T_1'}}{m_{T_1'}} \overline\phi_{T_1'}
  \overline\phi_{T_2'} \bigg) dz\bigg| 
\end{equation} where
\begin{equation*}
  \mathbf S =\{(T_1,T_1',T_2,T_2') \in \mathbf T_1 \times \mathbf T_1 \times \mathbf T_2 \times \mathbf T_2:  
  (v(T_1),v(T_1'),v(T_2),v(T_2')) \in S \}.
\end{equation*}
Using the arithmetic-geometric mean inequality, we divide the integral of the above estimate as follows:
\begin{align*}
  & \bigg|\int_{\psi_q^4} \bigg(
  \frac{m_{\Delta,T_1}}{m_{T_1}} \phi_{T_1}(z) \phi_{T_2}(z) \bigg) \bigg(
  \frac{m_{\Delta,T_1'}}{m_{T_1'}} \overline\phi_{T_1'}(z)
  \overline\phi_{T_2'}(z) \bigg) dz \bigg|\\
  & \qquad \qquad \lesssim  \int_{\psi_q^2}  \bigg|
  \frac{ m_{\Delta,T_1'}}{m_{T_1}} \Psi_{T_1'}(z_q)  
  \frac{\phi_{T_1 }(z) \phi_{T_2}(z)}{\Psi_{ T_1}(z_q) }  
  \frac{ }{} \bigg|^2 dz\\
  & \qquad \qquad  \qquad \qquad +\int_{\psi_q^2}    \bigg|
  \frac{m_{\Delta,T_1}}  {m_{T_1'}}{\Psi_{ T_1}(z_q) }
  \frac{\overline \phi_{T_1'}(z) \overline \phi_{T_2'}(z)}{\Psi_{ T_1'}(z_q) }  \bigg|^2 dz,
\end{align*}
where $z_q$ is the center of $q$. The two integrals of the right side in the above equation are of the same form. By combining \eqref{eqn:AF}, \eqref{restrT} and the above estimate, the estimate \eqref{WolffBush} is reduced to showing
\begin{equation} \label{reduced main}
  \sum_{\substack{
  q \in \mathcal{Q}_{\kappa}(Q) :\\
  \mathrm{dist}(q,\Delta) \ge cR}}
  \sum_{(T_1,T_1',T_2,T_2') \in \mathbf S}
  \int_{\psi_q^2}
  \bigg|
  \frac{ m_{\Delta,T_1'}}{m_{T_1}} \Psi_{ T_1'}(z_q)  
  \frac{\phi_{T_1 }(z)\phi_{T_2}(z)}{\Psi_{ T_1}(z_q) }   \bigg|^2 dz 
  \lesssim  c^{-C}R^{-\frac{n-1}{2}}.
\end{equation}

We separate the summation $\sum_{(T_1,T_1',T_2,T_2') \in \mathbf S}$
into two parts
\[
  \sum_{(T_1,T_1',T_2,T_2') \in \mathbf S} = 
  \sum_{T_1 \in \mathbf T_1, T_2 \in \mathbf T_2}
  \sum_{\substack{
  T_1' \in \mathbf T_1, T_2' \in \mathbf T_2 :\\
  (v(T_1),v(T_1'),v(T_2),v(T_2')) \in  S}}.
\]
By rearranging the left side of \eqref{reduced main} is bounded by
\[
\sum_{\substack{q \in \mathcal{Q}_{\kappa}(Q) :\\
\mathrm{dist}(q,\Delta) \ge cR}}
\sum_{\substack{T_1 \in \mathbf T_1,\\
T_2 \in \mathbf T_2}} \Bigg( \frac{1}{m_{T_1} }
\int_{\psi_q^2}  \bigg|\frac{\phi_{T_1}(z)\phi_{T_2}(z)}{ \Psi_{ T_1}(z_q)} \bigg|^2 dz \hspace{-6pt}
 \sum_{\substack{
T_1' \in \mathbf T_1, T_2' \in \mathbf T_2 :\\
(v(T_1),v(T_1'),v(T_2),v(T_2')) \in S}} \hspace{-18pt}
{m_{\Delta,T_1'}} \Psi_{ T_1'}^2(z_q)   \Bigg),
\]
where ${  (\frac{m_{\Delta,T}}{m_T})^2 \le 
\frac{m_{\Delta,T}}{m_T}  }$ is used. 
To show \eqref{reduced main} it suffices to prove the following two estimates: 
\begin{equation} \label{geometric1}
\max_{\substack{q \in \mathcal{Q}_{\kappa}(Q) :\\
\mathrm{dist}(q,\Delta) \ge cR}}
\max_{\substack{T_1 \in \mathbf T_1,\\
T_2 \in \mathbf T_2}} 
   \sum_{\substack{
  T_1' \in \mathbf T_1, T_2' \in \mathbf T_2 :\\
  (v(T_1),v(T_1'),v(T_2),v(T_2')) \in  S}}
   {m_{\Delta,T_1'}}  \Psi_{ T_1'}^2(z_q) 
   \lesssim  c^{-C} R^{1/2}
\end{equation}
and
\begin{equation} \label{SS1}
  \sum_{q \in \mathcal{Q}_{\kappa}(Q)}
\sum_{\substack{T_1 \in \mathbf T_1,\\
T_2 \in \mathbf T_2}}  \frac{1}{m_{T_1} }
\int_{\psi_q^2}  \bigg|\frac{\phi_{T_1}(z) \phi_{T_2}(z)}{ \Psi_{ T_1}(z_q)} \bigg|^2 dz \\
  \lesssim c^{-C}R^{-n/2}.
\end{equation}

\subsection{Proof of the estimate {(\ref{geometric1})}}
We take a close look at the condition  
\[(v(T_1),v(T_1'),v(T_2),v(T_2')) \in S.\]
From the first equation of \eqref{geo condition} we have 
\begin{equation} \label{v2A}
  v_2' = v_1+v_2-v_1' + O(r^{-1}).
\end{equation}
Inserting this into the second equation of 
\eqref{geo condition}, we have
\[
   |v_1|^2 - |v_1+v_2-v_1'|^2 = |v_1'|^2
   -|v_2|^2+O(r^{-1}),
\]
which is equivalent to
\begin{equation*}
 (v_1'-v_1) \cdot (v_1'-v_2)=O(r^{-1}).
\end{equation*}
Let $\sigma(v_1,v_2) \subset \mathbb R^n$  be the sphere 
of radius $\frac{|v_1-v_2|}{2}$ with center 
$\frac{v_1+v_2}{2}$. Then
the above equation means that $v_1'$ lies in 
the $O(r^{-1})$-neighborhood $\sigma(v_1,v_2;Cr^{-1})$ of the sphere $\sigma(v_1,v_2)$. Thus, if $v_1,~ v_2$ are given then one has $v_1' \in \sigma(v_1,v_2;Cr^{-1})$. Also, if $v_1,~v_2,~,v_1'$ are given then by \eqref{v2A} we see that $v_2'$ is contained in a ball $B(v_1+v_2-v_1';Cr^{-1})$.

Now we use this observation. For given $T_1$, $T_2$, 
we have that $v(T_1')$ is contained in 
\[
\sigma(v(T_1),v(T_2);Cr^{-1}).
\] 
If $v(T_1), v(T_2), v(T_1')$ is determined then $v(T_2')$ 
has $O(1)$ choices, and if $v(T_2')$ is determined then the 
number of $T_2'$ passing through $z_q$ is only one. 
Thus to show \eqref{geometric1} it suffices to show that for given $T_1 \in \mathbf T_1$, $T_2 \in \mathbf T_2$ and $q \in \mathcal Q_{\kappa}(Q)$ with $\mathrm{dist}(q,\Delta) \ge cR$,
\begin{equation} \label{geometric est}
  \sum_{\substack{T_1' \in \mathbf T_1:\\v(T_1') \in V_1 
  \cap  \sigma(v_1,v_2;Cr^{-1})}}
  m_{\Delta,T_1'}  \Psi_{ T_1'}^2(z_q)
  \lesssim  c^{-C} R^{1/2},
\end{equation}
where $v_1=v(T_1)$ and $v_2=v(T_2)$.
By \eqref{eqn:weight}, we have
\begin{equation} \label{eqn:conTphi}
  \sum_{\substack{T_1' \in \mathbf T_1:\\v(T_1') \in V_1 \cap
  \sigma(v_1,v_2;Cr^{-1})}}
  m_{\Delta,T_1'}  \Psi_{ T_1'}^2(z_q)
  \lesssim \sum_{T_2 \in \mathbf T_2} \int_{4Q} |\phi_{T_2}(z)|^2 
  \Gamma_{q}(z) dz + R^{-C},
\end{equation}
where
\[
  \Gamma_q(z) :=
  \sum_{\substack{T_1' \in \mathbf T_1: \\
  v(T_1') \in V_1 \cap \sigma(v_1,v_2;Cr^{-1}) }}
  \psi_{\Delta}(z) \Psi_{ T_1'}^2(z) \Psi_{ T_1'}^2(z_q).
\]
Consider $\Gamma_{q}(z)$. The union of $T_1'$ passing through $z_q$ with $v(T_1') \in V_1 \cap \sigma(v_1,v_2;Cr^{-1})$ forms a conic set $\Lambda_{\pi_1}(z_q;Cr)$ where  
\begin{equation*} 
\pi_1 := {\sigma(v_1,v_2)\cap  \tilde\Xi_1}.
\end{equation*}
From $\mathrm{dist}(\Delta,q) \ge cR$ it follows that the tubes  $T_1'$ passing through $z_q$ can overlap at most $O(1)$ times on $\Delta$. Thus,
\begin{equation*} 
\int_{4Q} |\phi_{T_2}(z)|^2
\Gamma_{q}(z) dz \lesssim c^{-C}
\int_{4Q} |\phi_{T_2}(z)|^2 \bigg( 1+
  \frac{\mathrm{dist}(z,\Lambda_{\pi_1}(z_q))}{r}\bigg)^{-N^{10}} dz.
\end{equation*}
By combining this with \eqref{eqn:conTphi},
\[
\sum_{\substack{T_1' \in \mathbf T_1:\\v(T_1') \in V_1 \cap
\sigma(v_1,v_2;Cr^{-1})}}
m_{\Delta,T_1'}  \Psi_{ T_1'}^2(z_q) 
\lesssim c^{-C}\sum_{T_2 \in \mathbf T_2} \int_{4Q} |\phi_{T_2}(z)|^2 \bigg( 1+
\frac{\mathrm{dist}(z,\Lambda_{\pi_1}(z_q))}{r}\bigg)^{-N^{10}} dz + R^{-C}.
\] 
By a dyadic decomposition, to prove \eqref{geometric est} it suffices to show
\begin{equation} \label{trans_app}
  \sum_{T_2 \in \mathbf T_2}\|\phi_{T_2}\|^2_{L^2(4Q \cap \Lambda_{\pi_1}(z_q;Cr))} \lesssim  c^{-C} r.
\end{equation}

We observe that  for any $w_2 \in \Xi_2$ and $w_1,w_1' \in \pi_1$ with $w_1 \neq w_1'$, there exists $0 \le \varepsilon_0 < 1$ such that
\begin{equation} \label{transv}
\Big\langle  \frac{w_2-w_1}{|w_2-w_1|}, \frac{w_1'-w_1}{|w_1'-w_1|}
\Big\rangle \le \varepsilon_0 .
\end{equation}
Indeed, if we take $\xi_2 \in \sigma(v_1,v_2)$
such that $w_1'+\xi_2= v_1+v_2$, that is, $(w_1'+\xi_2)/2$ is the center
of $\sigma(v_1,v_2)$,  then 
\(
  \langle  w_1-\xi_2 , w_1'-w_1\rangle=0.
\)
Using this we have 
\[ 
\text{the left side of \eqref{transv}} = \Big\langle  \frac{w_2-\xi_2}{|w_2-w_1|}, \frac{w_1'-w_1}{|w_1'-w_1|}
\Big\rangle. 
\]
So the above equation is bounded by \(
  {|w_2-\xi_2|}/{|w_2-w_1|}.
\) From the definition of $\Xi_1$ and $\Xi_2$, we can see that there is $0\le \varepsilon_0 <1$ such that ${|w_2-\xi_2|}/{|w_2-w_1|} \le \varepsilon_0$. Thus we have \eqref{transv}.
By Lemma \ref{englem}, 
\begin{equation*} 
\text{the left side of \eqref{trans_app}} \lesssim  r \sum_{T_2 \in \mathbf T_2} E(\phi_{T_2}).  
\end{equation*}
By \eqref{wavepack}, \eqref{l2sum} and \eqref{eqn:energy assum},
\begin{equation} \label{roughSS}
\sum_{T_2 \in \mathbf T_2} E(\phi_{T_2}) \lesssim c^{-C}  \sum_{T_2 \in \mathbf T_2} h_{T_2}^2 \lesssim c^{-C}.
\end{equation}
By combining these two estimates we obtain \eqref{trans_app}. 


\qed

\subsection{Proof of the estimate (\ref{SS1})}
Consider the integral in the left side of \eqref{SS1}. 
By applying Lemma \ref{lem:simBil} to the left side of \eqref{SS1} it suffices to show
\begin{equation} \label{sumAllq} 
  \sum_{q \in \mathcal{Q}_{\kappa}(Q) }
\sum_{\substack{T_1 \in \mathbf T_1,\\
T_2 \in \mathbf T_2}}  \frac{1}{m_{T_1} }
\frac{\|\phi_{T_1}\|_{L^2({\psi_q})}^2}{\Psi_{ T_1}^2(z_q)} 
\|\phi_{T_2}\|_{L^2({\psi_q})}^2
  \lesssim c^{-C} R^{1/2}.
\end{equation} 
The left side of the above equation is written as
\[
  \sum_{ T_1 \in \mathbf T_1} \Bigg(
  \sup_{q \in \mathcal{Q}_{\kappa}(Q) }
  \frac{\|\phi_{T_1}\|_{L^2({\psi_q})}^2}{ \Psi_{ T_1}^4(z_q)} \Bigg)
\Bigg( \frac{1}{m_{T_1}}
  \sum_{q \in \mathcal{Q}_{\kappa}(Q) }
  \sum_{T_2 \in \mathbf T_2}
\Psi_{ T_1}^2(z_q)\|\phi_{T_2}\|_{L^2({\psi_q})}^2  \Bigg).
\]
Consider the inner summand.
Since both the width of $T$ and the sidelength of $q$ are $\sim r$, by some basic estimates it follows that
for any tube $T \in \mathbf T_1 \cup \mathbf T_2$ and $q \in \mathcal Q_\kappa$,
\begin{equation} \label{elemEst}
\Psi_{ T}(z)\psi_q^{1/2}(z) \sim \Psi_{ T}(z_q).
\end{equation}
%
%
%
Using this equation we have
\[
\sum_{q \in \mathcal{Q}_{\kappa}(Q) }
\sum_{T_2 \in \mathbf T_2}
\Psi_{ T_1}^2(z_q)\|\phi_{T_2}\|_{L^2({\psi_q})}^2 
\lesssim 
\sum_{q \in \mathcal{Q}_{\kappa}(Q) }
\sum_{T_2 \in \mathbf T_2}
\|\Psi_{ T_1}\phi_{T_2}\|_{L^2(\psi_q^{3/2})}^2,
\]
which is $\lesssim m_{T_1}$ by \eqref{WT}.
Now to prove \eqref{sumAllq} it is enough to show
\[
\sum_{ T_1 \in \mathbf T_1}
\sup_{q \in \mathcal{Q}_{\kappa}(Q) } \frac{
\|\phi_{T_1}\|_{L^2({\psi_q})}^2}{\Psi_{ T_1}^4(z_q)}  \lesssim c^{-C} R^{1/2}.
\]
By \eqref{wavepack} and \eqref{elemEst}, 
\[
\frac{\|\phi_{T_1}\|_{L^2({\psi_q})}^2}{\Psi_{ T_1}^4(z_q)} \lesssim  c^{-C}  r^{-n} h_{T_1}^2 \frac{\| \Psi_{ T_1}^2\|_{L^2(\psi_q)}^2}{\Psi_{ T_1}^{4}(z_q)}
\lesssim  c^{-C}  r h_{T_1}^2.
\]
Therefore, by \eqref{l2sum}
\[
\sum_{ T_1 \in \mathbf T_1}
\sup_{q \in \mathcal{Q}_{\kappa}(Q) } \frac{
\|\phi_{T_1}\|_{L^2({\psi_q})}^2}{\Psi_{ T_1}^4(z_q)}  
\lesssim c^{-C}  r \sum_{ T_1 \in \mathbf T_1} h_{T_1}^2 
\lesssim c^{-C} R^{1/2}.
\]\qed

\section{Proof of Proposition \ref{prop:sharp}}
\label{sec:nonPP}

By Proposition \ref{prop:tao} it suffices to show
\begin{equation} \label{sumEE}
\|[\Phi_1][\Phi_2]\|_{L^p(Q)} \le (1+Cc) \overline{\mathcal K}(R/C_0)E(\phi_1)^{1/2}E(\phi_2)^{1/2}.
\end{equation}
By \eqref{core},  
\begin{equation*} 
  \|[\Phi_1][\Phi_2]\|_{L^p(Q)}  \le \sum_{\Delta \in \mathcal Q_{C_0}(Q)}
  \|\Phi_1^{(\Delta)}\Phi_2^{(\Delta)} \|_{L^p(\Delta)}.
\end{equation*}
From Proposition \ref{prop:tao} we see that 
$\mathrm{marg}(\Phi_j^{(\Delta)})
\ge 1/100 - (2^{C_0}/R)^{1/N}$.
By Definition \ref{defn:hypoth} the right side of the above estimate is bounded by
\begin{equation*} 
\overline{\mathcal K}(R/C_0) \sum_{\Delta \in \mathcal Q_{C_0}(Q)} E(\Phi_1^{(\Delta)})^{1/2}E(\Phi_2^{(\Delta)})^{1/2}.
\end{equation*}
By the Cauchy--Schwarz inequality, this is bounded by
\begin{equation*} 
\overline{\mathcal K}(R/C_0)  \bigg( \sum_{\Delta \in \mathcal Q_{C_0}(Q)}
E(\Phi_1^{(\Delta)}) \bigg)^{1/2} \bigg( \sum_{\Delta \in \mathcal Q_{C_0}(Q)}
E(\Phi_2^{(\Delta)}) \bigg)^{1/2}.
\end{equation*}
Thus, to show \eqref{sumEE} it suffices to show 
\[
\bigg( \sum_{\Delta \in \mathcal Q_{C_0}(Q)}
E(\Phi_j^{(\Delta)}) \bigg)^{1/2}
\le (1+Cc) E(\phi_j)^{1/2}
\]
for $j=1,2$. 

\begin{lem} \label{tube_sum}
Let $\mathcal Q$ be a finite index set. Suppose that $m_{q,T_j}$ are non-negative numbers such that
\begin{equation} \label{weight}
  \sum_{q \in \mathcal Q} m_{q,T_j} \le 1
\end{equation}
for all $T_j \in \mathbf T_j$.
Then,
\begin{equation} \label{Bessel}
  \bigg(\sum_{q \in \mathcal Q} E \Big(\sum_{T_j \in \mathbf T_j}
  m_{q,T_j} \phi_{T_j} \Big) \bigg)^{1/2}\le (1+Cc)E(\phi_j)^{1/2}.
\end{equation}
\end{lem}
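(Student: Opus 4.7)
The strategy is to interpret the energy as an $L^2$ norm on a fixed time slice, reduce to bounding the $L^2$ norm of the symbol $h_q := \sum_{T_j} m_{q,T_j} f_{T_j}$, and then sum over $q$ via a near-orthogonality argument exploiting the wave-packet structure.

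First, by the definition \eqref{def:energy} evaluated at $t=0$, together with Plancherel and the observation that the margin condition \eqref{marT} ensures $a_j \equiv 1$ on the Fourier support of $\widehat{h_q}$, I obtain
\[
E\Big(\sum_{T_j \in \mathbf T_j} m_{q,T_j} \phi_{T_j} \Big) = \|h_q\|_2^2.
\]
Regrouping tubes by velocity and unfolding the definition \eqref{eqn:f_T}, I write $h_q = \sum_{v \in V_j} M_{q,v}\, g_v$, where $g_v := \int_G P_{\Omega,v} f_j\, d\Omega$ and $M_{q,v}(x) := \sum_{x_0 \in L} m_{q, T_j^{(x_0,v)}} \eta^{x_0}(x)$.

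Next, I would establish the almost-orthogonality estimate
\[
\|h_q\|_2^2 \le (1+Cc) \sum_{v \in V_j} \|M_{q,v}\, g_v\|_2^2.
\]
This exploits two facts: the Fourier support of $g_v$ lies in an $r^{-1}$-neighborhood of $B_v$ (from the averaging over $G$), while multiplication by $M_{q,v}$ broadens the Fourier support by only $O(c^2/r)$, since $\widehat{\eta^{x_0}}$ has width $(c^{-2}r)^{-1}$. The cross-term interactions are then confined to boundary layers of relative volume $O(c^2)$, and a Cauchy--Schwarz bookkeeping on these layers produces the $(1+Cc)$ loss.

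Finally, summing over $q$ uses the weight condition \eqref{weight}. For each $v$, Cauchy--Schwarz with the partition of unity \eqref{etaP} combined with $m_{q,T}^2 \le m_{q,T}$ yields the pointwise bound
\[
\sum_{q \in \mathcal Q} |M_{q,v}(x)|^2 \le \sum_{x_0 \in L} \eta^{x_0}(x) \sum_{q \in \mathcal Q} m_{q,T_j^{(x_0,v)}}^2 \le 1;
\]
integration then gives $\sum_q \|M_{q,v}\, g_v\|_2^2 \le \|g_v\|_2^2$. Summing over $v$, the pairwise-disjoint supports $\{\Omega(B_v)\}_v$ for each fixed $\Omega \in G$, combined with Cauchy--Schwarz in $\Omega$, produce $\sum_v \|g_v\|_2^2 \le \|f_j\|_2^2 = E(\phi_j)$. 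Combining the three steps and taking square roots proves the lemma.

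The main obstacle is the almost-orthogonality of Step 2: a naive disjoint-support argument only delivers an $O(1)$ constant, which is inadequate to close the induction in Proposition \ref{prop:sharp}. The desired $(1+Cc)$ refinement hinges on the smallness of the Fourier spread $c^2/r$ relative to the cell separation scale $r^{-1}$, so that the leakage of $\widehat{M_{q,v}\, g_v}$ into the $(c^2/r)$-boundary layers of neighboring cells contributes only an $O(c)$ perturbation to the $\ell^2(v)$ sum.
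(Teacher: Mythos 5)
There is a genuine gap at your Step 2, and it is fatal to the $(1+Cc)$ constant. By setting $g_v := \int_G P_{\Omega,v} f_j\, d\Omega$ you perform the $\Omega$-average \emph{before} invoking orthogonality across the velocity cells. But $\widehat{g_v} = \mu_v \widehat{f_j}$, where $\mu_v(\xi)$ is the indicator of $B_v$ mollified at scale $r^{-1}$ (the radius of the translation disc $D(0;1/r)$), and the cells $B_v$ themselves have diameter $\sim r^{-1}$. So the smearing produced by the $\Omega$-average is comparable to the cell size, not to the tiny scale $c^2/r$ coming from $\widehat{\eta^{x_0}}$: your argument conflates these two scales. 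Concretely, take a single $q$ with all $m_{q,T_j}=1$; then $M_{q,v}\equiv 1$ by \eqref{spaceDecom}, $h_q$ has $\widehat{h_q}=\widehat{f_j}\sum_v\mu_v=\widehat{f_j}$ on frequencies well inside $\tilde\Xi_j$, while $\sum_v\|M_{q,v}g_v\|_2^2=\int|\widehat{f_j}|^2\sum_v\mu_v^2$, and on a set of frequencies of measure a constant fraction of each cell one has $\sum_v\mu_v^2\le 1-c_0$ for an absolute $c_0>0$ (near cell boundaries each $\mu_v$ is bounded away from $1$). Choosing $\widehat{f_j}$ concentrated there violates $\|h_q\|_2^2\le(1+Cc)\sum_v\|M_{q,v}g_v\|_2^2$ for small $c$. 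The positive cross terms $\langle M_{q,v}g_v,M_{q,v'}g_{v'}\rangle$ for adjacent $v,v'$ are a constant fraction of the diagonal, so your decomposition can only yield a constant $1+O(1)$ — which, as you yourself note, does not close the induction.

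The mechanism that actually produces $1+Cc$ (and the one the paper uses) requires keeping the $\Omega$-integral outside: apply Minkowski's inequality to reduce to a fixed $\Omega$, and split $\phi_j(0)$ into the part with frequencies in the shrunken cells $Y_j$ (at distance $\ge Cc^2R^{-1/2}$ from the cell boundaries) and the complementary boundary part. For fixed $\Omega$ the shrunken pieces remain \emph{exactly} orthogonal across $v$ even after multiplication by $\eta^{x_0}$, since $\widehat{\eta^{x_0}}$ has width $c^2R^{-1/2}$; this gives the main term with constant exactly $1$ after using \eqref{spaceDecom} and \eqref{weight}. The boundary part is not small for any fixed $\Omega$, but its average over $\Omega$ is: a given frequency lies within $Cc^2/r$ of a translated cell boundary with probability $O(c^2)$ over $w\in D(0;1/r)$, so $\int_G\|(1-P_{\Omega(Y_j)})\phi_j(0)\|_2^2\,d\Omega\lesssim c^2E(\phi_j)$, yielding the $O(c)$ error after Cauchy--Schwarz in $\Omega$. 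Your Steps 1, 3 (the pointwise bound $\sum_q|M_{q,v}|^2\le1$) and the final summation in $v$ are fine, but the central almost-orthogonality claim must be replaced by this fixed-$\Omega$ core/boundary splitting; as written, the proposal does not prove the lemma.
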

\begin{proof}
To get the constant $(1+Cc)$, we need to consider the Fourier support
of $f_{T_j}$. Let 
\[
  Y_j = \bigcup_{v_j \in V_j} \{ \xi \in B_{v_j} : \mathrm{dist}(\xi, \tilde \Xi_j \setminus B_{v_j}) > Cc^2 R^{-1/2} \}.
\]
We define the operator $P_{\Omega(Y)}$ by
\[
  \widehat{P_{\Omega(Y)}f} = \chi_{\Omega(Y)} \widehat f.
\]
By Minkowski's inequality, 
\begin{align*}
  \bigg(\sum_{q \in \mathcal Q} E \big(\sum_{T_j\in \mathbf T_j} m_{q,T_j}
  \phi_{T_j} \big) \bigg)^{1/2} &=
  \bigg(\sum_{q \in \mathcal Q} \Big\|\sum_{T_j\in \mathbf T_j} m_{q,T_j}
  \phi_{T_j}(0) \Big\|_2^2 \bigg)^{1/2} \\
  &=\bigg(\sum_{q \in \mathcal Q} \Big\|\sum_{T_j\in \mathbf T_j} m_{q,T_j}
  \int_G \eta^{x(T_j)} P_{\Omega,v(T_j)}
  \phi_j(0) d\Omega \Big\|_2^2 \bigg)^{1/2} \\
  & \le \int_G \bigg(\sum_{q \in \mathcal Q}
  \Big\|\sum_{T_j\in \mathbf T_j} m_{q,T_j}\eta^{x(T_j)} P_{\Omega,v(T_j)} \phi_j(0)  \Big\|_2^2 \bigg)^{1/2} {d\Omega},
\end{align*}
which is less than or equal to the sum of
\begin{equation}\label{B'v}
  \int \bigg(\sum_{q \in \mathcal Q}
  \Big\|\sum_{T_j\in \mathbf T_j} m_{q,T_j}\eta^{x(T_j)} P_{\Omega,v(T_j)}
  P_{\Omega(Y_j)}\phi_j(0)  \Big\|_2^2 \bigg)^{1/2} {d\Omega}
\end{equation}
and
\begin{equation}\label{Bv}
  \int \bigg(\sum_{q \in \mathcal Q}
  \Big\|\sum_{T_j\in \mathbf T_j} m_{q,T_j}\eta^{x(T_j)}  P_{\Omega,v(T_j)}
  (1-P_{\Omega(Y_j)})\phi_j(0)  \Big\|_2^2 \bigg)^{1/2} {d\Omega}.
\end{equation}
To prove \eqref{Bessel}, it suffices to show that
\[
  \eqref{B'v} \le E(\phi_j)^{1/2} \qquad \text{and} \qquad
  \eqref{Bv}  \le Cc E(\phi_j)^{1/2}.
\]
Consider \eqref{B'v}. From \eqref{spPar} we see that the Fourier transform of $\eta^{x_0}$ is supported in $D(0;c^{2}R^{-1/2})$. So, the Fourier support of
$\eta^{x_0} P_{\Omega,v_j}P_{\Omega(Y_j)} f$ is contained in $B_{v_j}$.
By orthogonality,
\eqref{B'v} is bounded by
\begin{equation*}
  \int \bigg(\sum_{q \in \mathcal Q}\sum_{v_j \in V_j}
  \Big\|\sum_{x_0 \in L} m_{q,T_j^{(x_0,v_j)}}
  \eta^{x_0}  P_{\Omega,v_j}
  P_{\Omega(Y_j)}\phi_j(0)  \Big\|_2^2 \bigg)^{1/2} {d\Omega}.
\end{equation*}
By rearranging, it is equal to
\[
\int \bigg( \sum_{v_j \in V_j} \int | P_{\Omega,v_j}
P_{\Omega(Y_j)}\phi_j(x,0)|^2 \sum_{q \in \mathcal Q}\bigg|\sum_{x_0 \in
L} {m_{q,T_j^{(x_0,v_j)}}} \eta^{x_0}(x) \bigg|^2 dx\bigg)^{1/2}
{d\Omega},
\]
which is bounded by
\[
\int \bigg( \sum_{v_j \in V_j} \int | P_{\Omega,v_j}
P_{\Omega(Y_j)}\phi_j(x,0)|^2 \bigg|\sum_{q \in \mathcal Q}\sum_{x_0 \in
L} {m_{q,T_j^{(x_0,v_j)}}} \eta^{x_0}(x) \bigg|^2 dx\bigg)^{1/2}
{d\Omega}.
\]
By \eqref{spaceDecom} and \eqref{weight}, this is bounded by
\[
  \int \bigg(\sum_{v_j \in V_j} \int | P_{\Omega,v_j}
P_{\Omega(Y_j)}\phi_j(x,0)|^2 dx\bigg)^{1/2} {d\Omega}.
\]
By orthogonality, the above is bounded by
\[
  \int \| P_{\Omega(Y_j)}\phi_j(0)\|_2 {d\Omega}.  
\]
Since $\| P_{\Omega(Y_j)}\phi_j(0)\|_2 \le E(\phi_j)^{1/2}$, we have that $\eqref{B'v} \le E(\phi_j)^{1/2}$.

Consider \eqref{Bv}. Apply the previous arguments but using
almost orthogonality instead of orthogonality. Then we have
\[
\eqref{Bv} \lesssim \int \| (1-P_{\Omega(Y_j)})\phi_j(0)\|_2 {d\Omega}.
\]
By the Cauchy--Schwarz inequality this is bounded by
\[
 \Big(\int \| (1-P_{\Omega(Y_j)})\phi_j(0)\|_2^2 {d\Omega} \Big)^{1/2}.
\]
By Plancherel's theorem and rearranging the integrals, this is equal
to
\[
 \Big(\int \| (1-\chi_{ \Omega(Y_j) })
 \widehat{\phi_j(0)}\|_2^2 {d\Omega} \Big)^{1/2}
 =  \Big(\int \Big( \int (1-\chi_{\Omega(Y_j)})(\xi)
 {d\Omega} \Big) |\widehat{\phi_j(0)}(\xi)|^2 d\xi \Big)^{1/2}.
\]
By a direct calculation we have that for any $\xi \in \tilde \Xi_j$,
\begin{align*}
 \int 1-\chi_{\Omega(Y_j)}(\xi) {d\Omega}
 &= \frac{1}{|D(0;CR^{-1/2})|}\int_{D(0;CR^{-1/2})}
 1-\chi_{Y_j}(\xi+w) dw\\
 &\lesssim c^2.
\end{align*}
Inserting this into the previous, we obtain that $\eqref{Bv}
\lesssim c^2 E(\phi_j)$.
\medskip

\end{proof}

\section{A localization operator}
In this section we introduce a localization operator and state some relevant basic estimates. When exploiting energy concentrations, the localization operator is used as a tool.

\smallskip

By \eqref{FForm} we have
\(
\widehat{\phi_j(t)}(\xi) = e^{-\pi i (t-t_0) |\xi|^2} \widehat{\phi_j(t_0)}(\xi),
\) which is written as
\begin{equation} \label{otExp}
\phi_j(t) = \mathcal U_j[\phi_j(t_0)](t-t_0).
\end{equation}
\begin{defn} \label{def:P_D}
  Let $D=D(x_D,t_D;r)$ be a disc. We define an operator $P_D \phi_j$ by
  \begin{equation} \label{def:PDt}
    P_D \phi_j(t) = \mathcal U_j [(\chi_D \ast
    \eta_{r^{1-1/N}}) \phi_j(t_D)](t-t_D)
  \end{equation}
  where $\eta_r$ is defined as \eqref{fourier_bump}.
\end{defn}

\begin{lem} \label{lem:ess supp}
Let $r \ge C_0$, $D = D(x_D,t_D;r)$ and 
\[
D^{\pm} := D(x_D,t_D;r(1 \pm r^{-1/2N})).
\] 
Suppose that $\phi_j$ 
satisfies that $\mathrm{marg}(\phi_j) \ge C_0 r^{-1+1/N}$ 
for $j=1,2$. Then, 
  \begin{equation} \label{Pmar}
    \mathrm{marg}(P_D \phi_j) \ge \mathrm{marg}(\phi_j) 
    - C_0r^{-1+1/N}
  \end{equation}
and 
  \begin{align}
    \| P_D \phi_j(t_D) \|_{L^2(\mathbb R^{n} \setminus D^+)} 
    &\lesssim r^{-N} E(\phi_j)^{1/2}, \label{PD1}\\
    \| (1-P_D) \phi_j \|_{L^2(D^-)} 
    &\lesssim r^{-N} E(\phi_j)^{1/2}, \label{PD2} \\
    E(P_D \phi_j) &\le \|\phi_j\|_{L^2(D^+)}^2
    + Cr^{-N}E(\phi_j), \label{PD3}\\
    E((1-P_D)\phi_j) &\le \|\phi_j(t_D)\|^2_{L^2(\mathbb R^n 
    \setminus  D^-)} + Cr^{-N}E(\phi_j) \label{PD4}.
  \end{align}
\end{lem}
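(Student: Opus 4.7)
The plan is to notice that $P_D\phi_j$ is, by construction, a smoothly truncated multiplication of the initial data at time $t_D$ followed by free Schrödinger propagation. Concretely, I will first argue that
\[
  P_D\phi_j(t_D) \;=\; (\chi_D\ast \eta_{r^{1-1/N}})\,\phi_j(t_D).
\]
Since $\widehat{\eta_{r^{1-1/N}}}$ is supported in the disc of radius $r^{-1+1/N}$ about the origin, the Fourier support of $(\chi_D\ast \eta_{r^{1-1/N}})\phi_j(t_D)$ lies in the $r^{-1+1/N}$-neighborhood of $\mathrm{supp}\,\widehat{\phi_j(t_D)}$. The hypothesis $\mathrm{marg}(\phi_j)\ge C_0 r^{-1+1/N}$ then keeps this enlarged support inside $\Xi_j$, where $a_j\equiv 1$. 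Hence applying $\mathcal U_j$ at time $0$ acts as the identity, giving the claimed identity. Lifting the enlarged $\xi$-support to the paraboloid, one loses at most $Cr^{-1+1/N}\le C_0 r^{-1+1/N}$ in margin, which is exactly \eqref{Pmar}.

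For \eqref{PD1} and \eqref{PD2} the task reduces to a pointwise bound on $\chi_D\ast \eta_{r^{1-1/N}}$. Using the Schwartz decay of $\eta$, whenever $\mathrm{dist}(x,D)\ge r^{1-1/2N}$ every $y\in D$ obeys $|x-y|/r^{1-1/N}\ge r^{1/2N}$, so
\[
  \bigl|\chi_D\ast \eta_{r^{1-1/N}}(x)\bigr|\;\le\; C_M\, r^{-M} \qquad\forall M>0.
\]
The mirror statement $|1-\chi_D\ast\eta_{r^{1-1/N}}(x)|\le C_M r^{-M}$ holds when $x\in D$ lies at distance $\ge r^{1-1/2N}$ from $\partial D$, by the same Schwartz-tail argument applied to $\int_{D^c}\eta_{r^{1-1/N}}(x-y)\,dy$. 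Multiplying through by $\phi_j(t_D)$, using $\|\phi_j(t_D)\|_2=E(\phi_j)^{1/2}$ and choosing $M$ sufficiently large in terms of $N$ delivers \eqref{PD1} and \eqref{PD2}.

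For \eqref{PD3} and \eqref{PD4} I invoke the $L^2$-unitarity of the Schrödinger propagator, so that $E(P_D\phi_j)=\|P_D\phi_j(t_D)\|_2^2$ and $E((1-P_D)\phi_j)=\|(1-P_D)\phi_j(t_D)\|_2^2$. Splitting $\mathbb R^n=D^+\cup(D^+)^c$, on $D^+$ one uses the trivial bound $|\chi_D\ast\eta_{r^{1-1/N}}|\le 1$ to control the integral by $\|\phi_j\|_{L^2(D^+)}^2$, and on the complement one applies \eqref{PD1}; this yields \eqref{PD3}. The proof of \eqref{PD4} is entirely symmetric, splitting $\mathbb R^n=D^-\cup(D^-)^c$, using $|1-\chi_D\ast\eta_{r^{1-1/N}}|\le 1$ on $(D^-)^c$ and \eqref{PD2} on $D^-$.

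There is no genuine obstacle. The only delicate point, from which everything else flows, is the opening observation: one must verify that the bandwidth $r^{-1+1/N}$ of the mollifier is comfortably smaller than the margin $C_0 r^{-1+1/N}$, so that the Fourier cutoff $a_j$ in the definition of $\mathcal U_j$ can be dropped at $t=t_D$. This is precisely the role of the large constant $C_0$ in the hypothesis on $\mathrm{marg}(\phi_j)$ and in the loss recorded on the right of \eqref{Pmar}.
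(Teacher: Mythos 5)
Your proof is correct and follows essentially the same route as the paper's: Schwartz-tail bounds for $\chi_D \ast \eta_{r^{1-1/N}}$ outside $D^{+}$ and for $1-\chi_D \ast \eta_{r^{1-1/N}}$ inside $D^{-}$, together with time-independence of the energy, reduce everything to the time slice $t_D$, and the margin hypothesis gives \eqref{Pmar} exactly as in the paper. Your explicit verification that $P_D\phi_j(t_D)=(\chi_D\ast\eta_{r^{1-1/N}})\phi_j(t_D)$ (i.e.\ that the cutoff $a_j$ can be dropped thanks to the margin) is a point the paper uses only implicitly, and it is handled correctly.
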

\begin{proof}
  {Consider  \eqref{Pmar}. Observe that the size of $\supp \widehat{\phi}_j$ is comparable to that of $\supp \widehat{\phi_j(0)}$. Similarly, the size of $\supp \widehat{P_D \phi_j}$ and  $\supp \widehat{P_D \phi_j(t_D)}$ is comparable.
From \eqref{def:PDt} we have
\begin{equation} \label{P_Dsupp}
  \widehat{P_D \phi_j(t_D)} = (\widehat{\chi_D} \widehat{\eta_{r^{1-1/N}}}) \ast \widehat{\phi_j(t_D)}.
\end{equation}
Since $\widehat{\eta_{r^{1-1/N}}}(\xi) = \widehat{\eta}(r^{1-1/N} \xi)$ is supported on $D(0;r^{-1+1/N})$, the Fourier support of $P_D \phi_j(t_D)$ is expanded $O(r^{-1+1/N})$ more than that of $\phi_j(t_D)$.}  
Thus we have \eqref{Pmar}.

We have that
\(
\frac{1}{2}D \subset D^- \subset D \subset D^+ \subset 2D.
\)
From this relation it follows that
  \begin{gather}
    0 \le \chi_D \ast \eta_{r^{1-1/N}} \le 1 , \nonumber \\
    \chi_D \ast \eta_{r^{1-1/N}}(x) \lesssim r^{-N} \quad 
    \text{ for $x \in \mathbb R^{n} \setminus D^+ $}, 
    \label{Ob1} \\
    1-\chi_D \ast \eta_{r^{1-1/N}}(x) \lesssim r^{-N} \quad 
    \text{ for $x \in D^-$ }.  \label{Ob2} 
  \end{gather}
  Indeed, the first one is trivial. Consider \eqref{Ob1}.
  We have that
  \begin{equation} \label{ptD}
    \chi_D \ast \eta_{r^{1-1/N}}(x) \lesssim
    \bigg(1+ \frac{\mathrm{dist}(x,D)}{r^{1-1/N}} 
    \bigg)^{-M}, \qquad  \forall M>0.
  \end{equation}
  If $x \in \mathbb R^n \setminus D^+$ then
  \[
    \mathrm{dist}(x,D) \ge r(1+r^{-1/2N}) - r = r^{1-1/2N}.
  \]
  By inserting this into the previous inequality we can 
  obtain \eqref{Ob1}. 
  
  Consider \eqref{Ob2}. We have that 
  \begin{equation} \label{pt(1-D)}
    1- \chi_D \ast \eta_{r^{1-1/N}}(x) 
    \lesssim  \bigg(1+ \frac{\mathrm{dist}
    (x, \mathbb R^n \setminus D)}{r^{1-1/N}}  \bigg)^{-M},
    \qquad \forall M>0.
  \end{equation}
  If $x \in D^-$, then
  \[
    \mathrm{dist}(x,\mathbb R^n \setminus D) 
    \ge r - r(1-r^{-1/2N}) = r^{1-1/2N}.
  \]
 Thus we have \eqref{Ob2}.

 \medskip
  
  Now consider from \eqref{PD1} to \eqref{PD4}.
  By \eqref{def:PDt} and \eqref{Ob1}  it follows that
  \[
    \| P_D \phi_j (t_D)\|_{L^2(\mathbb R^n \setminus D^+)}
    = \| (\chi_D \ast \eta_{r^{1-1/N}}) \phi_j(t_D) 
    \|_{L^2(\mathbb R^n \setminus D^+)} 
    \lesssim r^{-N} E(\phi_j)^{1/2}.
  \]
  So we have \eqref{PD1}. Similar arguments also give \eqref{PD2}.

  From \eqref{PD1} it follows that
  \begin{align*}
    E(P_D \phi_j) &=  \| P_D \phi_j(t_D) \|_2^2 \\
    & \le \| P_D \phi_j \|^2_{L^2(D^+)} 
    + Cr^{-N} E(\phi_j)^{1/2}  \\
    & \le \| \phi_j \|^2_{L^2(D^+)} + Cr^{-N} 
    E(\phi_j)^{1/2} .
  \end{align*}
  So we have \eqref{PD3}. Analogously we have \eqref{PD4}.
\end{proof}

Now we consider some properties of $P_D \phi_j$ in $\mathbb R^n \times \mathbb R$.

\begin{lem} \label{lem:ptPD}
  Let $ r \ge C_0$ and  $D = D(x_D,t_D;r)$. Suppose that 
  $\phi_j$ satisfies that $\mathrm{marg}(\phi_j) \ge 
  C_0r^{-1+1/N}$. Then, 
\begin{align} \label{pointPD}
  |P_D \phi_j(x, t)&| \le C_M r^{n/2}
  \bigg( 1+ \frac{\mathrm{dist} \big((x,t),
  \Lambda_j(x_D,t_D;r) \big)}{r^{1-1/N}} \bigg)^{-M}
  E(\phi_j)^{1/2}, \qquad \forall M >0 \\
  \intertext{and \vspace{-8pt}}
\label{extPD}
  &\| (1-P_D) \phi_j \|_{L^\infty(Q(x_D,t_D;r/4 ))}
  \lesssim r^{-N}  E(\phi_j)^{1/2}.
\end{align}
\end{lem}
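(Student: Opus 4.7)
The plan is to view $P_D\phi_j$ as an integral operator applied to $\phi_j(t_D)$ and estimate its kernel pointwise. Writing $\mu := \chi_D \ast \eta_{r^{1-1/N}}$, from \eqref{def:PDt} and \eqref{kerF} we have
\[
  P_D \phi_j(x,t) = \int K_{j,t-t_D}(x-y)\, \mu(y)\, \phi_j(y,t_D)\, dy.
\]
Lemma \ref{lem:ker_est} gives the unit-scale kernel decay $|K_{j,s}(z)| \le C_{M_1}(1 + \mathrm{dist}(\Lambda_{j,s}, z))^{-M_1}$, and \eqref{ptD} yields the $r^{1-1/N}$-scale Schwartz bump decay $\mu(y) \le C_{M_2}(1 + \mathrm{dist}(y, D)/r^{1-1/N})^{-M_2}$. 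Cauchy--Schwarz in $y$ against $\|\phi_j(t_D)\|_2 = E(\phi_j)^{1/2}$ reduces \eqref{pointPD} to bounding $\|K_{j,t-t_D}(x-\cdot)\,\mu\|_{L^2(\mathbb R^n)}$, where I write $D_* := \mathrm{dist}((x,t), \Lambda_j(x_D,t_D;r))$.

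The key geometric input is the translate inclusion $\Lambda_j(y', t_D) \subset \Lambda_j(x_D,t_D;|y'-x_D|)$, which via the triangle inequality in $\mathbb R^{n+1}$ yields
\[
  \mathrm{dist}(x-y, \Lambda_{j,t-t_D}) + \mathrm{dist}(y, D) \;\ge\; D_* \qquad \text{for every } y \in \mathbb R^n.
\]
Splitting into the cases where either $\mathrm{dist}(x-y,\Lambda_{j,t-t_D}) \ge D_*/2$ or $\mathrm{dist}(y, D) \ge D_*/2$, and choosing $M_1,M_2$ large relative to $M$, one obtains the pointwise bound $|K_{j,t-t_D}(x-y)|\,\mu(y) \lesssim (1 + D_*/r^{1-1/N})^{-M}$. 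Since the integrand is effectively concentrated in the $r^{1-1/N}$-neighborhood of $D$, whose volume is $\lesssim r^n$, squaring and integrating yield $\|K_{j,t-t_D}(x-\cdot)\,\mu\|_{L^2}^2 \lesssim r^n (1 + D_*/r^{1-1/N})^{-2M}$, proving \eqref{pointPD}.

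For the second estimate, I would write
\[
  (1-P_D)\phi_j(x,t) = \int K_{j,t-t_D}(x-y)(1-\mu)(y)\,\phi_j(y,t_D)\,dy
\]
and split the $y$-integration into $D^-$ and $\mathbb R^n \setminus D^-$. On $D^-$ the bound $(1-\mu)(y) \lesssim r^{-N}$ from \eqref{Ob2}, together with $\|K_{j,t-t_D}\|_2 \lesssim 1$ (Plancherel) and Cauchy--Schwarz, yields the desired $O(r^{-N})E(\phi_j)^{1/2}$. On $\mathbb R^n \setminus D^-$, for $(x,t) \in Q(x_D,t_D;r/4)$ an elementary computation using $|y-x_D| \ge r(1-r^{-1/2N})$, $|x-x_D|,\,|t-t_D| \lesssim r$, and $|w| \le 4$ for $w \in \widetilde\Xi_j$ shows $|x-y-(t-t_D)w| \gtrsim r$ uniformly, so Lemma \ref{lem:ker_est} gives $|K_{j,t-t_D}(x-y)| \lesssim r^{-M}$ for arbitrary $M$; a further Cauchy--Schwarz, with the $L^2$-tail controlled via the $O(r^{n-1})$ surface area of $\Lambda_{j,t-t_D}$, delivers \eqref{extPD}.

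The main obstacle is the scaling bookkeeping in \eqref{pointPD}: the kernel $K_{j,s}$ decays only at unit scale while the cut-off $\mu$ decays at scale $r^{1-1/N}$, and one must choose $M_1, M_2$ large enough relative to the target $M$ and to the $r^{n/2}$ volume prefactor so that the combined decay dilates cleanly into the weight $(1+D_*/r^{1-1/N})^{-M}$. The rest of the argument is routine, but this scaling step is where a mis-calibration is most likely.
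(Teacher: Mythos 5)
Your proposal is correct and follows essentially the same route as the paper: write $P_D\phi_j$ and $(1-P_D)\phi_j$ as integrals of $\phi_j(\cdot,t_D)$ against $K_{j,t-t_D}$ times the bump $\chi_D\ast\eta_{r^{1-1/N}}$ (resp.\ its complement), combine the kernel decay of Lemma \ref{lem:ker_est} with the bump estimates \eqref{ptD}, \eqref{pt(1-D)} to get the conic weight, and finish with Cauchy--Schwarz against $E(\phi_j)^{1/2}$ and the $O(r^{n/2})$ volume factor. Your translate-inclusion/triangle-inequality justification of the combined decay is exactly what the paper's terse pointwise bound on $K_{j,t-t_D}(x-y)(\chi_D\ast\eta_{r^{1-1/N}})^{1/2}(y)$ rests on, and the only slip is cosmetic: $\Lambda_{j,t-t_D}$ is a solid region of volume $O(r^n)$ rather than a surface of area $O(r^{n-1})$, a polynomial factor that the arbitrarily large decay exponent absorbs in any case.
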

\begin{proof}
Consider \eqref{pointPD}.  By \eqref{def:PDt} and 
\eqref{kerF}, 
\begin{equation*}
  P_D \phi_j(x,t) = \int K_{j,t-t_D}(x-y) (\chi_D \ast
  \eta_{r^{1-1/N}})(y) \phi_j(y,t_D) dy.
\end{equation*}
By Lemma \ref{lem:ker_est} and \eqref{ptD}, 
\[
  | K_{j,t-t_D}(x-y) (\chi_D \ast 
  \eta_{r^{1-1/N}})^{1/2}(y) |
  \le C_M
  \bigg( 1+ \frac{\mathrm{dist} \big((x,t),
  \Lambda_j(x_D,t_D;r) \big)}{r^{1-1/N}} \bigg)^{-M}
\]
for any $M>0$.
Thus,
\begin{align*}
  |P_{D} \phi_j(x,t) | &\le C_M
  \bigg( 1+ \frac{\mathrm{dist} \big((x,t),
  \Lambda_j(x_D,t_D;r) \big)}{r^{1-1/N}} \bigg)^{-M}
  \int (\chi_D \ast \eta_{r^{1-1/N}})^{1/2}(y) 
  |\phi_j(y,t_D)| dy \\
  &\le C_M
  r^{n/2} \bigg( 1+ \frac{\mathrm{dist} \big((x,t),
  \Lambda_j(x_D,t_D;r) \big)}{r^{1-1/N}} \bigg)^{-M} 
  E(\phi_j)^{1/2},
\end{align*}
where the last line follows from the Cauchy--Schwarz 
inequality.
\smallskip

Consider \eqref{extPD}. Similarly, by \eqref{def:PDt} and 
\eqref{kerF}, it is written as
\begin{equation*}
  (1-P_D) \phi_j(x,t) = \int K_{j,t-t_D}(x-y) (1-\chi_D 
  \ast \eta_{r^{1-1/N}})(y) \phi_j(y,t_D) dy.
\end{equation*}
By the Cauchy--Schwarz inequality,
\begin{equation} \label{1-pd}
    |(1-P_D) \phi_j(x,t)| \le 
    \bigg( \int \big| K_{j,t-t_D}(x-y) (1-\chi_D \ast
    \eta_{r^{1-1/N}})(y) \big|^2 dy \bigg)^{1/2} 
    E(\phi_j)^{1/2}.
\end{equation}
By Lemma \ref{lem:ker_est} and \eqref{pt(1-D)} we have that
for $(x,t) \in Q(x_D,t_D;r/4)$,
\begin{align*}
 \int \big| K_{j,t-t_D}(x-y) (1-\chi_D \ast
 \eta_{r^{1-1/N}})(y) \big|^2 dy 
 &\lesssim r^{-M} \int | K_{j,t-t_D}(x-y)| dy \\
 &\lesssim r^{-M}
\end{align*}
for any $M>0$.
Substituting this in \eqref{1-pd} we can obtain 
\eqref{extPD}.

\end{proof}

\begin{lem} \label{lem:Epd}
Let $ r \ge C_0$ and  $D = D(x_D,t_D;r)$. 
Then, for each $t_0 \in \mathbb R$ there is a disc $\mathcal G_{j,t_0}(D)$ of radius $A_w|t_D-t_0|/2 + 4r$ in $\mathbb R^n \times \{t_0\}$ such that 
$\mathcal G_{j,t_0}(D)$ contains $\Lambda_j(x_D, t_D;r) \cap (\mathbb R^n \times \{t_0\})$ and 
\begin{equation*} 
E(P_D \phi_j) \le \|\phi_j\|_{L^2(\mathcal G_{j,t_0}(D))} + Cr^{-N}E(\phi_j).
\end{equation*}
\begin{proof}
By \eqref{otExp} and \eqref{def:PDt},
\begin{align}
P_D\phi_j(x,t_D) &= (\chi_D \ast
\eta_{r^{1-1/N}})(x)\mathcal U_j[\phi_j(t_0)](x,t_D-t_0) 
\nonumber \\
&= \int (\chi_D \ast \eta_{r^{1-1/N}})(x)K_{j,t_D-t_0}(x-y)  \phi_j(y,t_0) dy. \label{f1}
\end{align}
If we ignore Schwarz tails, the equation $K_{j,t_D-t_0}(x-y)$ implies that $x-y$ is contained in $\Lambda_{j,t_D-t_0}$ by Lemma \ref{lem:ker_est}. The $\Lambda_{j,t_D-t_0}$ is contained in a disc of radius $A_w|t_D-t_0|/2 + C$, so $x-y$ is contained in a disc of radius $A_w|t_D-t_0|/2+C$. Since $x$ is contained in $D$, we see that that $y$ is contained in a disc $D_j^\star:=D(x_j,t_0;A_w|t_D-t_0|/2+ 2r)$ for some $x_j$.   
Using the symmetric property of $\Lambda_j$ about the origin, we also have that $y-x$ is contained in $\Lambda_{j,t_0-t_D}$, which implies $(y,t_0) \in  \Lambda_j(x,t_D)$. So, we can see that $D_j^{\star}$ contains $\Lambda_j(x_D,t_D;r) \cap \mathbb (R^n \times \{t_0\})$.
Using this observation we have
\[
|(\chi_D \ast \eta_{r^{1-1/N}})^{1/2}(x)K_{j,t_D-t_0}(x-y)| \le C_{M} \bigg(1+ \frac{\mathrm{dist}(y,D_j^\star)}{r^{1-1/N}} \bigg)^{-M}, \quad \forall M>0.
\]
Let $\tilde D_j^{\star} := D(x_j,t_0;A_w|t_D-t_0|/2+2r(1+r^{-1/2N}))$.
 By the above estimate,
\[
|(\chi_D \ast \eta_{r^{1-1/N}})^{1/2}(x)K_{j,t_D-t_0}(x-y)(1-\chi_{\tilde D_j^{\star}}(y))| \le C_M r^{-M}, \quad \forall M>0.
\]
We divide $\phi_j(t_0) = \chi_{\tilde D_j^{\star}} \phi_j(t_0) + (1-\chi_{\tilde D_j^{\star}}) \phi_j(t_0)$ and insert it into \eqref{f1}. Then,
\begin{align*}
\|P_D\phi_j(t_D) \|_{2} & \le \|\mathcal U_j[\chi_{\tilde D_j^{\star}} \phi_j(t_0)](t_D-t_0) \|_{2} +Cr^{-N}E(\phi_j)^{1/2} \\
&\le \|\chi_{\tilde D_j^{\star}} \phi_j(t_0)\|_{2} +Cr^{-N}E(\phi_j)^{1/2} \\
&\le \|\phi_j\|_{L^2(D(x_j,t_0;A_w|t_D-t_0|/2+4r))} +Cr^{-N}E(\phi_j)^{1/2}.
\end{align*}
If we take $\mathcal G_{j,t_0}(D) := D(x_j,t_0;A_w|t_D-t_0|/2+4r)$ then we have the desired estimate. Since $D_j^{\star}$ contains $\Lambda(x_D,t_D;r) \cap \mathbb (R^n \times \{t_0\})$, the $\mathcal G_{j,t_0}(D)$ also contains  $\Lambda_j(x_D, t_D;r) \cap (\mathbb R^n \times \{t_0\})$.
\end{proof}

\end{lem}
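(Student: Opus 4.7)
The plan is to freeze $P_D\phi_j$ at its base time $t=t_D$ and express it back in terms of $\phi_j(t_0)$ through the free Schrödinger kernel $K_{j,t_D-t_0}$, then isolate the $y$'s in the resulting integral to a disc of the required shape and size.

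First, I would take $t=t_D$ in the definition of $E$, obtaining $E(P_D\phi_j)=\|P_D\phi_j(t_D)\|_2^2$, and then unwind the localization: at $t=t_D$ the definition \eqref{def:PDt} collapses to $P_D\phi_j(x,t_D)=(\chi_D*\eta_{r^{1-1/N}})(x)\,\phi_j(x,t_D)$. Since $\phi_j(x,t_D)=\mathcal U_j[\phi_j(t_0)](x,t_D-t_0)=\int K_{j,t_D-t_0}(x-y)\,\phi_j(y,t_0)\,dy$ by \eqref{otExp} and \eqref{kerF}, this gives the representation used in \eqref{f1}.

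Next I would analyze the combined kernel $(\chi_D*\eta_{r^{1-1/N}})^{1/2}(x)\,K_{j,t_D-t_0}(x-y)$. From \eqref{ptD} the factor $(\chi_D*\eta_{r^{1-1/N}})^{1/2}$ forces $x$ to lie essentially in $D$ (with rapid decay outside), while Lemma~\ref{lem:ker_est} forces $x-y$ to lie essentially in $\Lambda_{j,t_D-t_0}$. Because $\Lambda_{j,t_D-t_0}=|t_D-t_0|\tilde\Xi_j$ fits inside a disc of radius $\tfrac{A_w}{2}|t_D-t_0|+O(1)$ (by the definition of $A_w$ as the angular spread of $\tilde\Xi_j$), combining the two constraints confines $y$ to some disc $D_j^\star=D(x_j,t_0;\tfrac{A_w}{2}|t_D-t_0|+2r)$, where $x_j$ is determined by $x_D$ and the axis of the cone $\Lambda_j$. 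A rapid-decay bound of the form $C_M(1+\mathrm{dist}(y,D_j^\star)/r^{1-1/N})^{-M}$ on the combined kernel follows.

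Then I would split $\phi_j(t_0)=\chi_{\tilde D_j^\star}\phi_j(t_0)+(1-\chi_{\tilde D_j^\star})\phi_j(t_0)$ with $\tilde D_j^\star$ a slight enlargement of $D_j^\star$ (radius grown by $r^{1-1/2N}$). For the tail piece, integrating the $r^{-M}$-kernel against $|\phi_j(t_0)|$ and then applying Cauchy--Schwarz, or first applying $\mathcal U_j$ and using its $L^2$-boundedness via Plancherel, gives an $O(r^{-N}E(\phi_j)^{1/2})$ contribution to $\|P_D\phi_j(t_D)\|_2$. For the main piece I would use that $\mathcal U_j$ is an $L^2$-isometry (Plancherel applied to \eqref{eqn:Uf}), so
\[
  \|\mathcal U_j[\chi_{\tilde D_j^\star}\phi_j(t_0)](t_D-t_0)\|_2\le\|\chi_{\tilde D_j^\star}\phi_j(t_0)\|_2=\|\phi_j\|_{L^2(\tilde D_j^\star)}.
\]
Taking $\mathcal G_{j,t_0}(D):=D(x_j,t_0;\tfrac{A_w}{2}|t_D-t_0|+4r)\supset\tilde D_j^\star$ then squares up to the advertised bound.

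Finally, one must verify the containment $\Lambda_j(x_D,t_D;r)\cap(\mathbb R^n\times\{t_0\})\subset\mathcal G_{j,t_0}(D)$. This is a purely geometric check: points in that slice have the form $x_D+(t_0-t_D)w+O(r)$ with $w\in\tilde\Xi_j$, and $(t_0-t_D)\tilde\Xi_j$ fits inside a disc of radius $\tfrac{A_w}{2}|t_D-t_0|$ about its center $x_j-x_D$, so the $r$-fattening is comfortably absorbed by $4r$. The only real subtlety is aligning $x_j$ with the cone's axis so that \emph{the same} disc that catches $y$ in the kernel argument also covers the conic slice, but this is just a matter of choosing $x_j$ as the image of $x_D$ under translation by $(t_0-t_D)$ times the center direction of $\tilde\Xi_j$. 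I expect this geometric bookkeeping of the center and radius of $\mathcal G_{j,t_0}(D)$ to be the only delicate point; the analytic estimates are then routine given Lemma~\ref{lem:ker_est} and Plancherel.
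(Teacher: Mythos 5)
Your proposal is correct and follows essentially the same route as the paper: freeze at $t_D$, represent $P_D\phi_j(t_D)$ through $K_{j,t_D-t_0}$ acting on $\phi_j(t_0)$, use Lemma \ref{lem:ker_est} plus the cutoff to confine the kernel to a disc of radius $A_w|t_D-t_0|/2+O(r)$, split off the Schwartz tail, and bound the main term by the $L^2$-boundedness of $\mathcal U_j$ (note it is merely bounded, not an isometry, because of the cutoff $a_j$ — but your displayed inequality is what is actually needed). The geometric containment check at the end matches the paper's as well.
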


\section{Proof of Proposition \ref{prop:energy concent}} 
\label{conPP}
\noindent Let $\phi_1$, $\phi_2$ satisfy the margin  \eqref{mar_cond} and the normalization 
\eqref{eqn:energy assum}. Let  $Q=Q(x_Q,t_Q;R)$ be a cube of sidelength $R$ and centered at $(x_Q,t_Q)$ and let $I_Q=[t_Q-R/2, t_Q+R/2]$ be the time interval of $Q$.

By Definition \ref{defn:hypoth} it suffices to show that for each $0 < \varepsilon \ll 1$
\begin{equation*} 
\| \phi_1 \phi_2\|_{L^p(Q)} \le (1-C_0^{-C})
\sup_{\substack{r \ge C_0^{-C}R,  \\ r/100< \mathring r \le (A_wA_d^{-1}+C_0^{-C})r}}  \mathcal K_{\varepsilon}(R, r, \mathring r ).
\end{equation*}
By Definition \ref{defn:hypoth2} we have
\[
  \| \phi_1 \phi_2 \|_{L^p(Q)} \le \mathcal K_{\varepsilon}(R,r,\mathring r) E_{r,\mathring r, t_e}^{\varepsilon}(\phi_1,\phi_2)^{1/p'}.
\]
It suffices to show the following proposition.

\begin{prop} \label{lem:energy}
Let $\phi_1$, $\phi_2$ be the same as described above and $R \ge 2^{C_0}$.  If $0< \delta \le C_0^{-C}$, then
for $0 < \varepsilon \ll 1$, there exist $t_e \in I_Q$, $r \ge C_0^{-C} R$ and $r/100< \mathring r \le (2A_wA_d^{-1} + C_0^{-C})r$ such that
\[
E^{\varepsilon}_{r, \mathring r,t_e}(\phi_1, \phi_2) \le 1-\delta.
\]
\end{prop}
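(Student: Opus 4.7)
The plan is to argue by contradiction. Choose $r := C_0^{-C}R$ at the smallest allowed scale and $\mathring r \in (r/100, (2A_wA_d^{-1}+C_0^{-C})r]$ with $\mathring r \sim r$. Suppose that $E_{r,\mathring r,t_e}^{\varepsilon}(\phi_1,\phi_2) > 1-\delta$ for every $t_e \in I_Q$. Since $\delta \le C_0^{-C} < 1/2$, the floor $\tfrac12 E(\phi_1)^{1/2}E(\phi_2)^{1/2} = \tfrac12$ in Definition \ref{def:EC} lies strictly below $1-\delta$, so at each $t\in I_Q$ there exist a disc $D(t)\in\mathfrak D_t^\varepsilon$ of radius $r$ and sub-discs $D_j(t)\subset\mathcal N(D(t))$ of radius $\mathring r$ with $\|\phi_1\|_{L^2(D_1(t))}\|\phi_2\|_{L^2(D_2(t))} > 1-\delta$. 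Because each factor is bounded by $E(\phi_j)^{1/2}=1$, both exceed $1-\delta$; combined with $\|\phi_j(t)\|_2^2 = 1$ this forces $\phi_j(t)$ to place at least $1-2\delta$ of its $L^2$-mass in $D_j(t)$, and the centers $x_j(t)$ of $D_j(t)$ satisfy $|x_1(t)-x_2(t)|\le 2A_*r$.

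Next I would extract a single-wave-packet description from the two-time concentration. Fix reference times $t_0,t_1\in I_Q$ with $|t_1-t_0|\ge R/4$. By estimate \eqref{PD4} of Lemma \ref{lem:ess supp}, the spatial concentration at $t_k$ yields $\|(1-P_{D_j(t_k)})\phi_j\|_{L^2}\lesssim \delta^{1/2}+r^{-N/2}$, so $\phi_j$ agrees with $P_{D_j(t_1)}P_{D_j(t_0)}\phi_j$ up to $O(\delta^{1/2})$ in $L^2$. Unwinding the definitions in \eqref{def:PDt} and applying stationary phase to the kernel $K_{j,t_1-t_0}$ (with stationary point $\xi = x/(t_1-t_0)$) shows that the composed localizer essentially restricts $\widehat{\phi_j(t_0)}$ to the Fourier disc of radius $O(\mathring r/R)$ centered at
\[
v_j := \frac{x_j(t_1)-x_j(t_0)}{t_1-t_0};
\]
the Fourier-support condition $\widehat{\phi_j(t_0)}\subset\tilde\Xi_j$ places $v_j$ in $\tilde\Xi_j$ up to an $O(C_0^{-C})$ perturbation. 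Thus $\phi_j$ is essentially a single coherent wave packet of spatial scale $\mathring r$ and velocity $v_j$, and Lemma \ref{lem:ptPD} propagates this description to every $t\in I_Q$: the function $\phi_j(t)$ concentrates at least $1-C\delta^{1/2}$ of its mass in $D(x_j(t_0)+(t-t_0)v_j,\,C\mathring r)$. Comparing with the concentration disc $D_j(t)$ of radius $\mathring r$ forces
\[
x_j(t) = x_j(t_0) + (t-t_0)v_j + O(\mathring r), \qquad t \in I_Q.
\]

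The transversality contradiction then follows by subtracting the $j=1,2$ identities at $t=t_1$ and comparing with the separation bound from the first paragraph. Using $|x_1(t)-x_2(t)|\le 2A_*r$ at both $t_0$ and $t_1$, $|v_1-v_2|\ge A_d$, $|t_1-t_0|\ge R/4$, and $\mathring r\lesssim A_wA_d^{-1}r$ with $r = C_0^{-C}R$, one obtains
\[
\tfrac{R}{4}A_d \le |t_1-t_0|\,|v_1-v_2| \le 4A_*r + O(\mathring r) \lesssim A_wA_d^{-1}C_0^{-C}R.
\]
Since $A_d,A_w,A_*$ depend only on $n$, this collapses to $A_d^2 \lesssim A_w C_0^{-C}$, which is false once $C_0$ is chosen sufficiently large. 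Hence the original assumption fails for some $t_e \in I_Q$, giving the conclusion.

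The main obstacle is the middle paragraph: turning spatial concentration at two far-apart times into the essentially-single-wave-packet statement with the predicted $O(\mathring r/R)$ Fourier localization. One must carefully track the Schwartz-type tails of $P_{D_j(t_k)}$ across two applications, verify that the composed localizer realises the stationary-phase restriction (not merely a spatial cutoff), and keep the accumulated $L^2$ errors of size $O(\delta^{1/2} + r^{-N})$ comfortably below the threshold $\delta \le C_0^{-C}$. The space--frequency uncertainty $\mathring r \cdot (\mathring r/R) \ge 1$, required for the coherent-state picture to be non-degenerate, holds since $\mathring r \ge C_0^{-C}R$ and $R \ge 2^{C_0}$; this is exactly the reason the proposition hypothesises $r\ge C_0^{-C}R$ and $\delta\le C_0^{-C}$, and why constants must be ordered as $N \ll C_0$ with $C$ independent of $C_0$.
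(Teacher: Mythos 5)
Your overall strategy (persistent small-scale concentration at every time forces two transversally separated ``velocities'' to keep the concentration centers within $O(A_*r)$ of each other over a time span $\sim R$, which is impossible once $r=C_0^{-C}R$ and $C_0$ is large) is in the right spirit, and the final arithmetic contradiction is fine. The genuine gap is the middle paragraph, which is exactly the step you flag as the main obstacle and which you do not prove: the claim that $P_{D_j(t_1)}P_{D_j(t_0)}$ acts, after stationary phase, as an approximate Fourier projection onto a disc of radius $O(\mathring r/R)$ about $v_j$, and the ensuing statement that $\phi_j(t)$ concentrates for \emph{every} $t\in I_Q$ in $D(x_j(t_0)+(t-t_0)v_j,C\mathring r)$. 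Neither claim follows from the lemmas you cite: \eqref{PD4} only controls the energy of $(1-P_D)\phi_j$, and Lemma \ref{lem:ptPD} (estimate \eqref{pointPD}) only gives decay of $P_D\phi_j$ away from the \emph{full} cone $\Lambda_j(x_D,t_D;\cdot)$, whose directions range over all of $\tilde\Xi_j$; it cannot by itself localize the evolution along a single ray. Turning the two-time spatial concentration into a coherent single-wave-packet description would require a separate phase-space argument that is nowhere in the paper, so as written the proof is incomplete at its crucial point.

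The step can be repaired with much less: you only need the two-time displacement statement, namely that $v_j=(x_j(t_1)-x_j(t_0))/(t_1-t_0)$ lies within $O(C_0^{-C})$ of $\tilde\Xi_j$. This follows directly from the paper's localization lemmas: put $D'=D(x_j(t_0),t_0;C\mathring r)$, note $\|(1-P_{D'})\phi_j\|_2\lesssim\delta^{1/2}+r^{-N}$ by \eqref{PD4}, and use \eqref{pointPD} to see that if $D(x_j(t_1),\mathring r)$ were at distance $\gg \mathring r$ from $\Lambda_j(x_j(t_0),t_0;C\mathring r)\cap\{t=t_1\}$ then $\phi_j$ could not carry mass $\ge 1-C\delta$ there; hence $x_j(t_1)=x_j(t_0)+(t_1-t_0)w_j+O(\mathring r)$ with $w_j\in\tilde\Xi_j$, which is all your last paragraph uses. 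This is essentially the mechanism of the paper's own proof, which however is organized differently: instead of a contradiction with a fixed $r$, it defines at each time the minimal concentration radius $r^\delta(t)$ (and $\mathring r^\delta(t)$ by an infimum, which is what produces an admissible pair with $r/100<\mathring r\le(2A_wA_d^{-1}+C_0^{-C})r$ and the exact value $E^{\varepsilon}_{r^\delta,\mathring r^\delta,t}=1-\delta$), chooses $t_e$ nearly maximizing $r^\delta(t)$, and shows via \eqref{pointPD} and \eqref{PD4} that every disc $D_t^\delta$ must meet both cones $\Lambda_{j,e}$ with vertex at the time-$t_e$ concentration; transversality of $\Lambda_{1,e}$ and $\Lambda_{2,e}$ then traps $\bigcup_t D_t^\delta$ in a ball of radius $C_0^{C}r_e$, forcing $r^\delta(t_e)\ge C_0^{-C}R$. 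You should either adopt that selection argument or insert the cone-containment lemma above in place of the coherent-packet claim.
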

\begin{proof}
Let $t \in I_Q$ and let $\mathfrak D_t^\varepsilon(\phi_1,\phi_2)$ and $\mathcal N(D)$ be defined as in Definition \ref{def:EC}. We define $\mathbf D^{\varepsilon}_{t}(\delta)$ to be the collection of discs $D \in \mathfrak D_t^\varepsilon(\phi_1,\phi_2)$ such that there exist $D_1, D_2 \subset \mathcal N(D)$ of radius $(2A_wA_d^{-1} + C_0^{-C})r_D$ satisfying
\begin{equation*} \label{eqn:concent}
\|\phi_1\|_{L^2(D_1)}  \|\phi_2\|_{L^2(D_2)} \ge 1- \delta.
\end{equation*}
For a disc $D$ we define $\mathring r_D$ to be the infimum of the radii of the discs $D_1,D_2 \subset \mathcal N(D)$ satisfying the above inequality, that is,
\[
\mathring r_D := \inf \{r: \|\phi_1\|_{L^2(D_1)}  \|\phi_2\|_{L^2(D_2)} \ge 1- \delta \text{ for }  D_1,D_2 \subset \mathcal N(D) \text{ with } r_{D_1}=r_{D_2}=r \}.
\]
Then for $D \in \mathbf D_t^{\varepsilon}(\delta)$,
\begin{equation} \label{boundofring}
\mathring r_D \le (2A_wA_d^{-1} + C_0^{-C})r_D.
\end{equation}
Since $\mathring r_D$ is the infimum, we have 
\begin{equation*} 
\sup_{\substack{D_1,D_2 \subset \mathcal N(D)\\: r_{D_1}=r_{D_2} \le \mathring r_D}}\|\phi_1\|_{L^2(D_1)}  \|\phi_2\|_{L^2(D_2)} \le 1- \delta.
\end{equation*}
In fact, since $\phi_1$ and $\phi_2$ are smooth, we have the equality
\begin{equation} \label{RingRadi}
\sup_{\substack{D_1,D_2 \subset \mathcal N(D)\\: r_{D_1}=r_{D_2} \le \mathring r_D}}\|\phi_1\|_{L^2(D_1)}  \|\phi_2\|_{L^2(D_2)} = 1- \delta.
\end{equation}
Let
\begin{equation} \label{radiusECen}
r^\delta(t) :=  \inf_{D \in \mathbf D_{t}^{\varepsilon} (\delta)}  r_D.
\end{equation}
Since $\phi_1$ and $\phi_2$ are smooth, there is a disc $D \in \mathbf D_t^{\varepsilon}(\delta)$ of radius $r^{\delta}(t)$. Let $\mathring r^{\delta}(t)$ is the infimum of radii $\mathring r_D$ for $D \in \mathbf D_t^{\varepsilon}(\delta)$ with radius $r^{\delta}(t)$, i.e.,
\begin{equation} \label{infRing}
\mathring r^{\delta}(t) := \inf_{D \in \mathbf D_t^{\varepsilon}(\delta) : r_D = r^{\delta}(t)} \mathring r_D.
\end{equation}
Then from \eqref{boundofring} it follows that for each $t \in I_Q$,
\begin{equation} \label{smallBigcomp}
\mathring r^{\delta}(t) \le (2A_wA_d^{-1} + C_0^{-C})r^{\delta}(t).
\end{equation}

To show that $\mathring r^\delta(t) \ge  r^\delta(t)/100$, 
let $D \in \mathbf D_t^{\varepsilon}(\delta)$ be a disc of radius $r^{\delta}(t)$ with $\mathring r_D = \mathring r^{
\delta}(t)$, and let $D_1,\, D_2 \subset \mathcal N(D)$ be the discs of radius $\mathring r^{\delta}(t)$. If we suppose $\mathring r^\delta(t) < r^\delta(t)/100$, then by \eqref{radiusECen} the distance between $D_1$ and $D_2$ is larger than $C^{-1}r^{\delta}(t)$.
Since the Fourier transform of $\phi_j(t)$ is compactly supported, for any proper disc $D \subset \mathbb R^n \times \{t\}$ we have $\|\phi_j\|_{L^2(D)} \gneq 0$.  
Thus, there is a disc $D'$ of radius $\lneq r^\delta(t)$ such that \[
\|\phi_1\|_{L^2(\mathcal N(D')}  \|\phi_2\|_{L^2(\mathcal N(D'))} = 1- \delta.
\]
This implies $D' \in \mathbf D_t^{\varepsilon}(\delta)$ but it contradicts $\eqref{radiusECen}$. Thus we have $\mathring r^\delta(t) \ge  r^\delta(t)/100$.

We also have that for each $t \in I_Q$,
\begin{equation} \label{EnergyCons}
  E^{\varepsilon}_{r^{\delta}(t), \mathring r^{\delta}(t), t} (\phi_1,\phi_2) = 1- \delta.
\end{equation}
Indeed, let $D$ be a  disc of radius $r^{\delta}(t)$ in $\mathbb R^n \times \{t\}$. If $D \in \mathbf D_t^{\varepsilon}(\delta)$ then by \eqref{RingRadi} and \eqref{infRing},
\[
\sup_{\substack{D_1, D_2 \subset \mathcal N(D)\\:r_{D_1}=r_{D_2}=\mathring r^{\delta}(t)}}\|\phi_1\|_{L^2(D_1)}  \|\phi_2\|_{L^2(D_2)} = 1-\delta.
\]
If $D \notin \mathbf D_t^{\varepsilon}(\delta)$ then from the definition of $\mathbf D_t^{\varepsilon}(\delta)$ it follows that for any discs $D_1, D_2 \subset \mathcal N(D)$ of radius $(2A_wA_d^{-1} + C_0^{-C})r^{\delta}(t)$, 
\[
\|\phi_1\|_{L^2(D_1)}  \|\phi_2\|_{L^2(D_2)} < 1-\delta.
\] 
Thus we have \eqref{EnergyCons}.

We choose a time $t_e \in I_Q$ such that
\begin{equation} \label{max radius}
  \frac{1}{2}\sup_{t \in I_Q} r^\delta(t) \le r^\delta(t_e)
  \le \sup_{t \in I_Q} r^\delta(t).
\end{equation}
By \eqref{smallBigcomp} and \eqref{EnergyCons}, to prove the proposition it suffices to show that if $0< \delta \le C_0^{-C}$ then
\begin{equation} \label{max r rage}
  r^\delta(t_e) \ge C_0^{-C}R.
\end{equation}

Since $\phi_1$ and $\phi_2$ are smooth, from \eqref{EnergyCons} it follows that for each $t \in I_Q$, there exist discs $D_t^{\delta} \subset \mathbb R^n \times \{t\}$ of radius $r^{\delta}(t)$ and $D_1, D_2 \subset \mathcal N(D_t^{\delta})$ of radius $\mathring r^{\delta}(t)$ such that
\begin{equation} \label{eqn:minimumenergy}
\|\phi_1\|_{L^2(D_1)}  \|\phi_2\|_{L^2(D_2)} = 1- \delta.
\end{equation}

Let $(x_e,t_e)$ be the center of $D_{t_e}^{\delta}$,
\begin{equation*} 
  r_e = r^\delta(t_e)+C_0
\end{equation*}
and
\[
\Lambda_{j,e} := \Lambda_j(x_e,t_e; C_0^2 A_*r_e).
\]
To have \eqref{max r rage} it is enough to show
\begin{equation} \label{EneCone}
  \bigcup_{t \in I_Q} D_t^\delta \subset \bigcap_{j=1,2}
  C\Lambda_{j,e}.
\end{equation}
Indeed, since $\Lambda_{1,e}$ and $\Lambda_{2,e}$ meet transversely, the union $\bigcup_{t \in  I_Q} D_t^\delta$ is contained in a ball $B(x_e,t_e;C_0^Cr_e)$. By comparing the length of $I_Q$ with the radius $C_0^Cr_e$, we have $r_e \ge C_0^{-C}  R$, which implies \eqref{max r rage} because  $R \ge 2^{C_0}$.

\medskip

To show \eqref{EneCone}, by \eqref{max radius} it suffices to prove that
for each $t \in I_Q$, the disc $\mathcal N(D_t^\delta)$  intersects
both $\Lambda_{1,e}$ and $\Lambda_{2,e}$.
Suppose for contradiction that there exists $\mathcal N(D_t^\delta)$ contained in $\mathbb R^{n+1} \setminus
\Lambda_{j,e}$ for some $j=1,2$. Let
\begin{equation*}
D_e := D(x_e,t_e;C_0A_*r_e).
\end{equation*}
We decompose
\begin{equation} \label{esD}
\|\phi_j\|_{L^2(\mathcal N(D_t^\delta))}^2 \le
4\|P_{D_e}\phi_j\|_{L^2(\mathcal N(D^\delta_t))}^2 + 4\|(1-P_{D_e})\phi_j\|_{L^2(\mathcal N(D^\delta_t))}^2.
\end{equation}
From $\mathcal N(D_t^\delta) \subset \mathbb R^{n+1} \setminus \Lambda_{j,e}$, one has $\mathrm{dist}(\mathcal N(D_t^\delta), \Lambda_j(x_e,t_e;C_0A_*r_e)) \ge C_0 A_*r_e$.
Since $r_e \ge C_0$ and the radius of $\mathcal N(D_t^\delta)$ is $\le A_*r_e$, by \eqref{pointPD} 
\begin{align}
\|P_{D_e}\phi_j\|_{L^2(\mathcal N(D_t^\delta))} 
&\lesssim r_e^{n/2} (C_0 A_*r_e^{1/N})^{-N^{10}} E(\phi_j)^{1/2} |\mathcal N(D_t^\delta)|^{1/2} \nonumber \\
&\lesssim C_0^{-C}. \label{Din}
\end{align}
From \eqref{PD4} and $r_e \ge C_0$ it follows that
\begin{align}
\|(1-P_{D_e}) \phi_j \|^2_{L^2(\mathcal N(D_t^\delta))}
& \le E((1-P_{D_e}) \phi_j)  \nonumber \\
& \le \| \phi_j(t_e) \|_{L^2(\mathbb R^{n} \setminus D_e^-)}^2 + C_0^{-C}. \label{OutD}
\end{align}
Since $\mathcal N(D_{t_e}^\delta) \subset \frac{1}{2}D_{e} \subset D_{e}^{-}$, we have 
\[
\| \phi_j (t_e)\|_{L^2(\mathbb R^{n} \setminus D_e^-)}^2 \le 1-\| \phi_j \|_{L^2(D_e^-)}^2 \le 1-\| \phi_j \|_{L^2(\mathcal N(D_{t_e}^\delta))}^2.
\]
By \eqref{eqn:energy assum} and \eqref{eqn:minimumenergy},
\begin{equation} \label{lowerbb}
  \|\phi_j\|_{L^2(\mathcal N(D_t^{\delta}))} \ge  1-\delta.
\end{equation}
Combining the above two estimates we have
\[
\| \phi_j \|_{L^2(\mathbb R^{n} \setminus D_e^-)}^2 \le 2\delta.
\]
Substituting this in \eqref{OutD} we have
\[
\|(1-P_{D_e})\phi_j\|_{L^2(\mathcal N(D_t^\delta))}^2
\le 2\delta+ C_0^{-C}.
\]
By inserting this and \eqref{Din} into \eqref{esD}, it follows that
\[
\|\phi_j\|_{L^2(\mathcal N(D_t^\delta))}^2 \le 8\delta + 8C_0^{-C}.
\]
Comparing this with \eqref{lowerbb} we have
$1-8C_0^{-C} \le 10\delta$. However, since $0< \delta \le C_0^{-C}$ is small, this is a contradiction. 
\end{proof}

\section{Proof of Proposition \ref{prop:core}} \label{sec:core}
\noindent 
Let $\phi_1, \phi_2$ satisfy the margin requirement \eqref{mar_cond} and the energy normalization \eqref{eqn:energy assum}.
Let $Q$ be a cube of sidelength $R$.
By Definition \ref{defn:hypoth2} we may assume that $\phi_1$, $\phi_2$ satisfy
\begin{equation} \label{exCon}
\|\phi_1\phi_2\|_{L^p(Q)} = \mathcal K_{\varepsilon}(R,r,\mathring r)E_{r,\mathring r, t_e}^{\varepsilon}
(\phi_1,\phi_2)^{1/p'}.
\end{equation}
It suffices to show 
\begin{equation} \label{pf2}
\| \phi_1 \phi_2 \|_{L^p(Q)} \le (1+Cc)E_{r,\mathring r,t_e}^{\varepsilon}(\phi_1,
\phi_2)^{1/p'} \overline{\mathcal K}(R) + 2^{CC_0}.
\end{equation}
We may assume that 
\[
E_{r,\mathring r,t_e}^{\varepsilon}(\phi_1,\phi_2) = \sup_{\substack{D \in \mathfrak D_{t}^{\varepsilon}(\phi_1,\phi_2)\\: r_D = r}} \sup_{\substack{\mathring D_1, \mathring D_2 \subset \mathcal N(D)\\:r_{\mathring D_1}=r_{\mathring D_2}=\mathring r}}  \big( \|\phi_1\|_{L^2(\mathring D_1)}  \|\phi_2\|_{L^2(\mathring D_2)} \big).
\]
Indeed, if $E_{r,\mathring r,t_e}^{\varepsilon}(\phi_1,\phi_2)=1/2$, we can take $\tilde r \ge r$ and $\mathring r \le \mathring {\tilde r} \le (2A_wA_d^{-1}+C_0^{-C}) \tilde r$ such that 
\[
1/2=\sup_{\substack{D \in \mathfrak D_{t_e}^{\varepsilon}(\phi_1,\phi_2)\\: r_D = \tilde r}} \sup_{\substack{\mathring D_1, \mathring D_2 \subset \mathcal N(D)\\:r_{\mathring D_1}=r_{\mathring D_2}=\mathring{\tilde r}}} \big( \|\phi_1\|_{L^2(\mathring D_1)}  \|\phi_2\|_{L^2(\mathring D_2)} \big).
\]
Thus, in \eqref{pf2} we can replace $E_{r,\mathring r,t_e}^{\varepsilon}(\phi_1,\phi_2)$ with $E_{\tilde r,\mathring{\tilde r},t_e}^{\varepsilon}(\phi_1,\phi_2)$.

By the smoothness of $\phi_1$, $\phi_2$, there exists a disc  $D_e \in \mathfrak D_{t_e}^{\varepsilon}(\phi_1,\phi_2)$ of radius $r$ such that
\begin{equation} \label{DeTake}
\sup_{\substack{\mathring D_1, \mathring D_2 \subset \mathcal N(D_e)\\:r_{\mathring D_1}=r_{\mathring D_2}=\mathring r}} \big(  \|\phi_1\|_{L^2(\mathring D_1)}  \|\phi_2\|_{L^2(\mathring D_2)} \big)
=
E_{r,\mathring r, t_e}^{\varepsilon}(\phi_1,\phi_2).
\end{equation}
Set
\[
\Lambda_j^{[e]} := \Lambda_j(x_e,t_e;r'/2)
\] for $j=1,2$ where $(x_e,t_e)$ is the center of $D_e$ and $r':=\frac{\mathring r}{A_*}(1+(\frac{\mathring r}{A_*})^{-1/2N})$.

\subsection{}

\begin{figure}[htbp]
\begin{center}

\tikzset{every picture/.style={line width=0.75pt}} 

\begin{tikzpicture}[x=0.75pt,y=0.75pt,yscale=-1,xscale=1]

\draw   (260.37,204.18) .. controls (260.37,195.63) and (280.76,188.71) .. (305.9,188.71) .. controls (331.04,188.71) and (351.43,195.63) .. (351.43,204.18) .. controls (351.43,212.72) and (331.04,219.65) .. (305.9,219.65) .. controls (280.76,219.65) and (260.37,212.72) .. (260.37,204.18) -- cycle ;
\draw    (131.17,65.19) -- (261.24,207.44) ;
\draw    (260.59,42.9) -- (349.69,199.85) ;
\draw    (480.2,65.66) -- (350.13,207.91) ;
\draw    (350.78,43.37) -- (261.67,200.32) ;
\draw  [draw opacity=0] (314.55,138.74) .. controls (311.92,139.99) and (308.93,140.69) .. (305.76,140.69) .. controls (302.63,140.69) and (299.67,140) .. (297.07,138.79) -- (305.76,123.43) -- cycle ; \draw   (314.55,138.74) .. controls (311.92,139.99) and (308.93,140.69) .. (305.76,140.69) .. controls (302.63,140.69) and (299.67,140) .. (297.07,138.79) ;
\draw   (299.13,141.14) -- (297.39,138.93) -- (300.33,138.41) ;
\draw   (311.65,138.11) -- (314.28,139.12) -- (312.02,141.07) ;
\draw  [dash pattern={on 4.5pt off 4.5pt}]  (260.5,227.2) -- (351.5,227.2) ;
\draw [shift={(351.5,227.2)}, rotate = 180] [color={rgb, 255:red, 0; green, 0; blue, 0 }  ][line width=0.75]    (0,5.59) -- (0,-5.59)(10.93,-3.29) .. controls (6.95,-1.4) and (3.31,-0.3) .. (0,0) .. controls (3.31,0.3) and (6.95,1.4) .. (10.93,3.29)   ;
\draw [shift={(260.5,227.2)}, rotate = 0] [color={rgb, 255:red, 0; green, 0; blue, 0 }  ][line width=0.75]    (0,5.59) -- (0,-5.59)(10.93,-3.29) .. controls (6.95,-1.4) and (3.31,-0.3) .. (0,0) .. controls (3.31,0.3) and (6.95,1.4) .. (10.93,3.29)   ;

\draw (301,229.1) node [anchor=north west][inner sep=0.75pt]    {$r'$};
\draw (295.14,141.06) node [anchor=north west][inner sep=0.75pt]    {$A_{d}$};
\draw (408.94,85.95) node [anchor=north west][inner sep=0.75pt]    {$\Lambda _{2}^{[e]}$};
\draw (192.99,85.13) node [anchor=north west][inner sep=0.75pt]    {$\Lambda _{1}^{[e]}$};

\end{tikzpicture}

\caption{Intersection of $\Lambda_1$ and $\Lambda_2$}
\label{fig:ToE}
\end{center}
\end{figure}
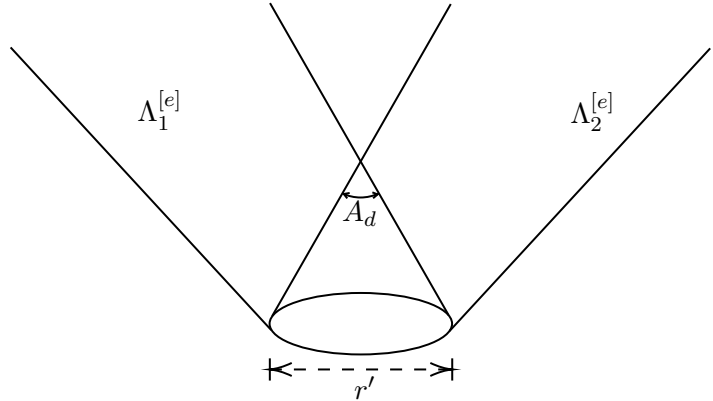

Consider the case that $Q$ intersects  both $\Lambda_1^{[e]}$ and $\Lambda_2^{[e]}$. Let $D_Q=D(x_Q,t_Q;4R)$ be the disc of radius $4R$ with the same center as $Q$. 
We decompose 
\begin{equation*} 
\begin{split}
\| \phi_1 \phi_2 \|_{L^p(Q)} \le &\|P_{D_Q} \phi_1 P_{D_Q}\phi_2\|_{L^p(Q)} \\
& \qquad +\| P_{D_Q} \phi_1 (1-P_{D_Q})\phi_2 \|_{L^p(Q)}
+\| (1-P_{D_Q})\phi_1 \phi_2 \|_{L^p(Q)}.
\end{split}
\end{equation*}
From \eqref{extPD} and H\"older's inequality it follows that
\begin{align*} 
  \| (1-P_{D_Q})\phi_1 \phi_2 \|_{L^p(Q)}  &\lesssim    
  R^{-N+C}, \\
  \| P_{D_Q}\phi_1 (1-P_{D_Q})\phi_2 \|_{L^p(Q)}
  &\lesssim R^{-N+C}.
\end{align*}
Both $P_{D_Q}\phi_1$ and $P_{D_Q}\phi_2$ satisfy the relaxed margin condition \eqref{Bmargin}, so by Proposition \ref{prop:sharp},
\begin{align*}
  \|P_{D_Q}\phi_1 P_{D_Q}\phi_2 \|_{L^p(Q)} &\le
  (1+Cc)E(P_{D_Q}\phi_1)^{1/2} E(P_{D_Q}\phi_2)^{1/2}\overline{\mathcal K}(R) 
  + 2^{CC_0} \\
  &\le
  (1+Cc) (E(P_{D_Q}\phi_1)^{1/2} E(P_{D_Q}\phi_2)^{1/2})^{1/p'} \overline{\mathcal K}(R) 
  + 2^{CC_0}.
\end{align*}
%
%
%
%
%
%
%
%
By \eqref{triK}, to prove \eqref{pf2} it suffices to show 
\begin{equation*}
E(P_{D_Q}\phi_1)^{1/2} E(P_{D_Q}\phi_2)^{1/2} \le E^{\varepsilon}_{r,\mathring r,t_e}(\phi_1,\phi_2)+ CR^{-N}.
\end{equation*}
By Lemma \ref{lem:Epd} there are  discs $\mathcal G_{1,t_e}(D_{Q})$, $\mathcal G_{2,t_e}(D_{Q})$ of radius 
\[
R_{e}:=A_w|t_Q-t_e|/2+ 16R
\] at time $t_e$ such that  the $\mathcal G_{j,t_e}(D_{Q})$ contains $\Lambda_j(x_Q,t_Q;4R) \cap (\mathbb R^n \times \{t_e\})$ and 
\[
E(P_{D_Q}\phi_j)^{1/2}  \le \|\phi_j\|_{L^2(\mathcal G_{j,t_e}(D_{Q}))} + CR^{-N}, \qquad j=1,2.
\]

To show 
\begin{equation} \label{leEn}
\|\phi_j\|_{L^2(\mathcal G_{1,t_e}(D_{Q}))}\|\phi_j\|_{L^2(\mathcal G_{2,t_e}(D_{Q}))} \le E^{\varepsilon}_{r,\mathring r,t_e}(\phi_1,\phi_2),
\end{equation}
we consider a geometric property of $\Lambda_1^{[e]} \cap \Lambda_2^{[e]}$. Since $\Lambda_1^{[e]} \cap \Lambda_2^{[e]}$ is a conic set (see Figure \ref{fig:ToE}), we can see that $A_d|t_Q-t_e| \le 2r' \le 4\mathring r/A_*$ and so
\[
R_{e} \le 2A_w A_d^{-1} \mathring r/A_* + 16R
\le (2A_w A_d^{-1} + 2000C_0^{-C})\mathring r/A_* \le \mathring r.
\]
Since $Q$ intersects $\Lambda_j^{[e]}$ and the $\mathcal G_{j,t_e}(D_{Q})$ contains $\Lambda_j(x_Q,t_Q;4R) \cap (\mathbb R^n \times \{t_e\})$, the $\mathcal G_{j,t_e}(D_{Q})$ intersects $\Lambda_{j}^{[e]} \cap (\mathbb R^n \times \{t_e\})$. To show that $\mathcal G_{j,t_e}(D_{Q})$ is contained in $\mathcal N(D_e)$ it suffices to show $r'+2R_e \le A_*r$.
Using $\mathring r/A_* \le r$ we have
\[
r'+2R_e \le 2\mathring r/A_*+ 2(2A_w A_d^{-1} + 2000C_0^{-C})\mathring r/A_* \le A_*r.
\]
Therefore we have \eqref{leEn}.

\subsection{}

Consider the case that $Q$ is contained in $\mathbb R^{n+1} \setminus \Lambda_j^{[e]}$ for some $j=1,2$. We only consider the case that $Q$ is contained in $\mathbb R^{n+1} \setminus \Lambda_1^{[e]}$, because the other case is similar. Let $\check D_{e} := D(x_{e},t_e;\frac{r}{400A_*})$. By the triangle inequality, 
\begin{equation} \label{IE1}
\| \phi_1 \phi_2 \|_{L^p(Q)} \le \|P_{\check D_{e}}\phi_1 \phi_2\|_{L^p(Q)}   + \| (1-P_{\check D_{e}})\phi_1 \phi_2 \|_{L^p(Q)}.
\end{equation}
Since $Q$ is contained in $\mathbb R^{n+1} \setminus \Lambda_j^{[e]}$ and $\frac{r'}{2} \ge \frac{r}{200A_*}$, from Lemma \ref{lem:ptPD} it follows that 
\begin{equation} \label{IE1_1}
\|P_{\check D_e}\phi_1 \phi_2 \|_{L^p(Q)} \le Cr^{-N^4}.
\end{equation}
By Definition \ref{defn:hypoth2},
\begin{align*}
\|(1-P_{\check D_e})\phi_1\phi_2\|_{L^p(Q)} &\le \mathcal K(R,r) E((1-P_{\check D_e})\phi_1)^{1/2p} E_{r,\mathring r,t_e}((1-P_{\check D_e})\phi_1,\phi_2)^{1/p'}  \\
&\le \mathcal K(R,r)E((1-P_{\check D_e})\phi_1)^{1/2p} E_{r,\mathring r,t_e}(\phi_1,\phi_2)^{1/p'}. 
\end{align*}
By \eqref{exCon}, 
\[
\|(1-P_{\check D_e})\phi_1\phi_2\|_{L^p(Q)}
\le E((1-P_{\check D_e})\phi_1)^{1/2p} \|\phi_1\phi_2\|_{L^p(Q)}.
\]
By \eqref{PD4},
\[
E((1-P_{\check D_e})\phi_1) \le 1- \|\phi(t_{e})\|^2_{L^2(\check D_e/2)} +Cr^{-N}.
\]
By \eqref{DetCen} and \eqref{eqn:energy assum},
\[
\|\phi(t_{e})\|_{L^2(\check D_e/2)} \ge \varepsilon
\] 
By combining the above three inequalities,
\[
\|(1-P_{\check D_e})\phi_1\phi_2\|_{L^p(Q)}
\le \kappa \|\phi_1\phi_2\|_{L^p(Q)}
\]
where $\kappa := (1- \varepsilon^2 +Cr^{-N} )^{1/2p}$.\\
By applying the triangle inequality to the right side of the above inequality,
\begin{align*}
\|(1-P_{\check D_e})\phi_1\phi_2\|_{L^p(Q)}
&\le \kappa \|P_{\check D_e}\phi_1\phi_2\|_{L^p(Q)} + \kappa \|(1-P_{\check D_e})\phi_1\phi_2\|_{L^p(Q)}.
\end{align*}
By rearranging,
\[
\|(1-P_{\check D_e})\phi_1\phi_2\|_{L^p(Q)} \le \frac{\kappa}{1-\kappa} \|P_{\check D_e}\phi_1\phi_2\|_{L^p(Q)}.
\]
By inserting this estimate into \eqref{IE1},
\[
\| \phi_1\phi_2\|_{L^p(Q)} \le \frac{1}{1-\kappa} \|P_{\check D_e}\phi_1\phi_2\|_{L^p(Q)}.
\]
From $\varepsilon \ge R^{-N/4}$ and $r \ge C_0^{C}R$, we have
\(
\frac{1}{1-\kappa} \lesssim  R^{CN},
\)
and
\[
\| \phi_1\phi_2\|_{L^p(Q)}  \lesssim R^{CN} \|P_{\check D_e}\phi_1\phi_2\|_{L^p(Q)}.
\]
By \eqref{IE1_1} we thus have \eqref{pf2}.

\section{Proof of Proposition \ref{prop:scale}.} \label{sec:endPP}
Suppose that $\phi_1$, $\phi_2$ obey the margin condition \eqref{mar_cond}. We may assume the normalization \eqref{eqn:energy assum}. It suffices to show that 
\begin{equation*}
  \|\phi_1 \phi_2\|_{L^p(Q_R)}
  \le (1+Cc) \mathcal K_{\varepsilon}(R/C_0, r_{\natural},\mathring r_{\natural}) E_{r,\mathring r,t_e}(\phi_1,\phi_2)^{1/p'}
  + c^{-C}.
\end{equation*}
We apply Proposition \ref{prop:tao}. Then it suffices to show
\begin{equation}  \label{recurGoal}
  \|[\Phi_1][\Phi_2]\|_{L^p(Q)}
  \le (1+Cc) \mathcal K_{\varepsilon}(R/C_0, r_{\natural},\mathring r_{\natural})E_{r,\mathring r, t_e}(\phi_1,\phi_2)^{1/p'} 
  + c^{-C},
\end{equation}
where the cube $Q$ is of side-length $CR$.
By \eqref{core}, 
\[
  \|[\Phi_1][\Phi_2]\|_{L^p(Q)} =
  \Big( \sum_{\Delta \in \mathcal Q_{C_0}(Q)}
  \|\Phi_1^{(\Delta)}\Phi_2^{(\Delta)} \|_{L^p(\Delta)}^p \Big)^{1/p}.
\]
Since the sidelength of $\Delta$ is $2^{-C_0}CR$, there is a cube of sidelength $R/C_0$ containing $\Delta$. So, by the margin of $\Phi_j$ shown in Proposition \ref{prop:tao} and Definition \ref{defn:hypoth2},
\[
  \|\Phi_1^{(\Delta)}\Phi_2^{(\Delta)} \|_{L^p(\Delta)} \le 
  \mathcal K_{\varepsilon}(R/C_0, r_{\natural},\mathring r_{\natural})
  E_{r_{\natural}, \mathring r_{\natural},t_e}(\Phi_1^{(\Delta)},\Phi_2^{(\Delta)})^{1/p'}
  \big( E(\Phi_1^{(\Delta)})^{1/2}E(\Phi_2^{(\Delta)})^{1/2} \big)^{1/p}.
\] 
By combining the above two equations,
\begin{align*}
&\|[\Phi_1][\Phi_2]\|_{L^p(Q)} \\
&\le  \mathcal K_{\varepsilon}(R/C_0,r_{\natural},\mathring r_{\natural})
\bigg( \sum_{\Delta \in \mathcal Q_{C_0}(Q)} 
E_{r_{\natural}, \mathring r_{\natural},t_e}(\Phi_1^{(\Delta)},\Phi_2^{(\Delta)})^{p/p'}
E(\Phi_1^{(\Delta)})^{1/2}E(\Phi_2^{(\Delta)})^{1/2} \bigg)^{1/p} \\
&\le  \mathcal K_{\varepsilon}(R/C_0,r_{\natural},\mathring r_{\natural}) \sup_{\Delta \in \mathcal Q_{C_0}(Q)} E_{r_{\natural}, \mathring r_{\natural},t_e}(\Phi_1^{(\Delta)},\Phi_2^{(\Delta)})^{1/p'}
\bigg( \sum_{\Delta \in \mathcal Q_{C_0}(Q)} 
E(\Phi_1^{(\Delta)})^{1/2}E(\Phi_2^{(\Delta)})^{1/2} \bigg)^{1/p}.
\end{align*}
By the Cauchy--Schwarz inequality and Lemma \ref{tube_sum}, it is bounded by
\begin{equation*}
(1+ Cc) \mathcal K_{\varepsilon}(R/C_0,r_{\natural},\mathring r_{\natural}) 
\sup_{\Delta \in \mathcal Q_{C_0(Q)}} 
E_{r_{\natural}, \mathring r_{\natural},t_e}(\Phi_1^{(\Delta)},\Phi_2^{(\Delta)})^{1/p'}.   
\end{equation*}
Now it suffices to show
\begin{equation*} 
\sup_{\Delta \in \mathcal Q_{C_0}(Q)} E_{r_{\natural}, \mathring r_{\natural},t_e}(\Phi_1^{(\Delta)},\Phi_2^{(\Delta)}) 
\le (1+Cc)E_{r,\mathring r, t_e}(\phi_1,\phi_2) + Cr^{-N},
\end{equation*}
because $\mathcal K(R,r,\mathring r) \lesssim \mathcal K(R) \lesssim R^C$.
By Lemma \ref{tube_sum} we have 
\(
E(\Phi_j^{\Delta})^{1/2} \le (1+Cc)E(\phi_j)^{1/2}
\).
Thus it is enough to show 
\[
  \|\Phi_j^{(\Delta)} \|_{L^2(D(z_0;r_{\natural}))}
  \le (1+Cc) \|\phi_j\|_{L^2(D(z_0;r))} + Cr^{-N}
\]
for all $\Delta \in \mathcal Q_{C_0}(Q)$ and all $z_0 \in \mathbb R^{n+1}$.
We have this estimate from the following lemma:
\begin{lem}
Let $\mathcal Q$ be a finite index set. Suppose that $m_{q,T_j}$ are non-negative numbers with \eqref{weight}.
Then, for any $r \ge 2C_0$ and $z_0 \in \mathbb R^{n+1}$,
\begin{align} \label{persistant}
  & \bigg( \sum_{q \in \mathcal Q} \big\| \sum_{T_j \in \mathbf T_j}
  m_{q,T_j} \phi_{T_j} \big\|^2_{L^2(D(z_0;r(1-Cr^{-1/3N})))}
  \bigg)^{1/2} \nonumber \\
  &\qquad \qquad \qquad \qquad \le (1+Cc)\|\phi_j\|_{L^2(D(z_0;r))} 
  + r^{-N}E(\phi_j)^{1/2}.
\end{align}
\end{lem}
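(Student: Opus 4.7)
The plan is to mimic Lemma~\ref{tube_sum}, localised on the disc $D := D(z_0;r)$. Let $t_0$ denote the time coordinate of $z_0$, set $D_\natural := D(z_0;r_\natural)$ with $r_\natural = r(1-Cr^{-1/3N})$, and introduce an auxiliary disc $D^\flat := D(z_0;r^\flat)$ with $r^\flat = r(1-C'r^{-1/2N})$ for a suitably large constant $C'$. Since $r^{1-1/3N}\gg r^{1-1/2N}$ (the ratio $r^{1/6N}$ is large), a direct check gives the nesting $D_\natural \subset (D^\flat)^- \subset D^\flat \subset (D^\flat)^+ \subset D$, which makes Lemma~\ref{lem:ess supp} cleanly applicable at the scale $r^\flat$.

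Because $\phi_j\mapsto\phi_{T_j}$ is linear in $\phi_j$, the splitting $\phi_j = P_{D^\flat}\phi_j + (1-P_{D^\flat})\phi_j$ induces a splitting $\phi_{T_j}=\phi_{T_j}^\sharp+\phi_{T_j}^\flat$, where $\phi_{T_j}^\sharp$ and $\phi_{T_j}^\flat$ are the wave packets of $P_{D^\flat}\phi_j$ and $(1-P_{D^\flat})\phi_j$ respectively. Minkowski's inequality in $\ell^2_q$ then breaks the left-hand side of \eqref{persistant} into a ``sharp'' piece and a ``flat'' piece. For the sharp piece I would simply bound $L^2(D_\natural)\le L^2(\mathbb{R}^n)$ and apply Lemma~\ref{tube_sum} to $P_{D^\flat}\phi_j$, whose margin remains acceptable by \eqref{Pmar}; combined with \eqref{PD3} and $(D^\flat)^+\subset D$ this is $(1+Cc)\|\phi_j\|_{L^2(D)} + O(r^{-N})E(\phi_j)^{1/2}$, which is the principal term.

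The real work is the flat piece, because simply dropping the disc is too wasteful: $E((1-P_{D^\flat})\phi_j)^{1/2}$ can be as large as $E(\phi_j)^{1/2}$. I would instead split the tubes according to whether $x(T_j)+t_0 v(T_j)$ lies inside an $O(R^{1/2})$-neighbourhood of $D_\natural$ or not. Tubes outside contribute at most $O(r^{-M})$ to $L^2(D_\natural)$ for any $M$, via the pointwise decay \eqref{wavepack}. For the remaining tubes I would re-run the Fourier-orthogonality computation from the proof of Lemma~\ref{tube_sum} with an added spatial cutoff supported in $(D^\flat)^-$, so that the corresponding $\sum|h_{T_j}^\flat|^2$ is controlled, modulo a $c^{-C}$ loss, by $\|(1-P_{D^\flat})\phi_j(t_0)\|_{L^2((D^\flat)^-)}^2$, which is $O(r^{-2N})E(\phi_j)$ by \eqref{PD2}; the nesting choice above is exactly what makes the enlarged neighbourhood of $D_\natural$ still sit inside $(D^\flat)^-$. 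The $c^{-C}$ loss is harmlessly absorbed into the $r^{-N}$ gain, and summing the two pieces yields \eqref{persistant}.
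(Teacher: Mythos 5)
Your treatment of the main (``sharp'') piece is fine and is essentially the paper's: Lemma \ref{tube_sum} applied at the localized function, then \eqref{PD3} with the nesting $(D^\flat)^+\subset D$. The gap is in the flat piece. Your key claim is that, for the tubes meeting an $O(R^{1/2})$-neighbourhood of $D_\natural$ at time $t_0$, one can ``re-run the orthogonality computation with a spatial cutoff'' and bound $\sum|h^\flat_{T_j}|^2$ by $c^{-C}\|(1-P_{D^\flat})\phi_j(t_0)\|^2_{L^2((D^\flat)^-)}=O(r^{-2N})E(\phi_j)$. This cannot work: by \eqref{hT} the coefficient $h^\flat_{T_j}$ is the Hardy--Littlewood maximal function of the (sharply frequency-projected) \emph{time-zero} data evaluated at the tube's initial position, so it is not determined by the solution near $D_\natural$ at time $t_0$; it picks up the full $O(1)$ energy of $(1-P_{D^\flat})\phi_j$ sitting outside $D^\flat$, with only polynomial decay in the distance. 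Concretely, if $(1-P_{D^\flat})\phi_j$ is essentially a single unit-energy wave packet whose tube passes just outside $D^\flat$ at time $t_0$ (distance $\sim r^{1-1/3N}$ from $D_\natural$), then each of the $\sim c^{2n}(r/R^{1/2})^{n}$ tubes of that frequency crossing $D_\natural$ at time $t_0$ has $h^\flat_{T_j}\gtrsim c^{-n}(d_{T_j}/R^{1/2})^{-n}$, and the resulting $\sum|h^\flat_{T_j}|^2$ is only polynomially small in $R$ (of order $(R^{1/2}/r^{1-1/3N})^{n}$), while your right-hand side is $O(r^{-2N})$. Since the orthogonality computation in Lemma \ref{tube_sum} is a fixed-time argument and contains no mechanism transporting local information from time $t_0$ back to the time-zero positions where the $h^\flat_{T_j}$ live, the claimed local coefficient bound is not a variant of that proof, and with it falls the $r^{-N}$ error term in \eqref{persistant}; a coefficient-based accounting of the flat packets through $D_\natural$ (via \eqref{wavepack}) can only give polynomial-in-$R$ smallness, not $r^{-N}E(\phi_j)^{1/2}$.

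The difficulty is self-inflicted by splitting $\phi_j$ \emph{before} applying the weights: once you re-expand $(1-P_{D^\flat})\phi_j$ into wave packets and multiply by $m_{q,T_j}\in[0,1]$, the cancellation that makes the flat part tiny on the inner disc is no longer available for free, and you are forced into the problematic coefficient estimate. The paper splits \emph{after} weighting: with $\tilde D'=D(z_0;r(1-r^{-1/2N}))$ it writes $\sum_{T_j}m_{q,T_j}\phi_{T_j}=P_{\tilde D'}\big(\sum_{T_j}m_{q,T_j}\phi_{T_j}\big)+(1-P_{\tilde D'})\big(\sum_{T_j}m_{q,T_j}\phi_{T_j}\big)$, so \eqref{PD2} applies directly to the weighted superposition $F_q=\sum_{T_j}m_{q,T_j}\phi_{T_j}$ (which has the required Fourier support and margin) and gives $\|(1-P_{\tilde D'})F_q\|_{L^2(\tilde D)}\lesssim r^{-N}E(F_q)^{1/2}$ on the inner disc with no wave-packet bookkeeping at all; Lemma \ref{tube_sum} then controls $\sum_q E(F_q)$, and the main term is handled exactly as in your sharp piece. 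If you reorganize your flat piece this way, your argument becomes the paper's proof.
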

\begin{proof}
Let $\tilde D=D(x_0,t_0;r(1-2r^{-1/3N}))$, $\tilde D'=D(x_0,t_0;r(1-r^{-1/2N}))$ and $\tilde D''=D(x_0,t_0;r)$. Then
we have $\tilde D \subsetneq \tilde D'\subsetneq \tilde D''$.\\
We divide
\[
  \sum_{T_j \in \mathbf T_j} m_{q,T_j} \phi_{T_j}
  = \sum_{T_j \in \mathbf T_j} m_{q,T_j} P_{\tilde D'}\phi_{T_j}
  + \sum_{T_j \in \mathbf T_j} m_{q,T_j} (1-P_{\tilde D'})\phi_{T_j}.
\]
Consider the first summation in the right side. We have
\[
  \sum_{q \in \mathcal Q} \Big\|  \sum_{T_j \in \mathbf T_j} m_{q,T_j}
  P_{\tilde D'}\phi_{T_j} \Big\|_{L^2(\tilde D)}^2
  \le \sum_{q \in \mathcal Q} E\Big( \sum_{T_j \in \mathbf T_j}
  m_{q,T_j}  P_{\tilde D'}\phi_{T_j} \Big).
\]
Applying Lemma \ref{tube_sum} we have
\[
  \sum_{q \in \mathcal Q} \Big\|  \sum_{T_j \in \mathbf T_j} m_{q,T_j}
  P_{\tilde D'}\phi_{T_j} \Big\|_{L^2(\tilde D)}^2 \le 
  (1+Cc)E(P_{\tilde D'} \phi_j).
\]
From \eqref{PD3} we can see that
\begin{align*}
  E(P_{\tilde D'}\phi_j) &\le \|\phi_j\|^2_{L^2(\tilde D'^+)}
  + Cr^{-N}E(\phi_j) \\
  &\le \|\phi_j\|^2_{L^2(\tilde D'')}
  + Cr^{-N}E(\phi_j),
\end{align*}
where $\tilde D' \subset \tilde D'^+ \subset \tilde D''$ is used.
Thus we obtain
\[
  \bigg(\sum_{q \in \mathcal Q} \Big\|  \sum_{T_j \in \mathbf T_j}
  m_{q,T_j}  P_{\tilde D'}\phi_{T_j} \Big\|_{L^2(\tilde D)}^2 \bigg)^{1/2}
  \le (1+Cc)\|\phi_j\|_{L^2(\tilde D'')} + Cr^{-N}E(\phi_j)^{1/2}.
\]
To prove \eqref{persistant} it now suffices to show
\begin{equation} \label{ErEs}
  \bigg(\sum_{q \in \mathcal Q} \Big\|  \sum_{T_j \in \mathbf T_j}
  m_{q,T_j} (1-P_{\tilde D'})\phi_{T_j} \Big\|_{L^2(\tilde D)}^2 \bigg)^{1/2}\lesssim
  r^{-N}E(\phi_j)^{1/2}.
\end{equation}
Since the operator $P_D$ is linear, we have
\[
  \sum_{T_j \in \mathbf T_j} m_{q,T_j}
  (1-P_{\tilde D'})\phi_{T_j} 
  = 
  (1-P_{\tilde D'}) \Big(\sum_{T_j \in \mathbf T_j} m_{q,T_j}\phi_{T_j} \Big).
\]
By $\tilde D  \subset \tilde D'^- \subset \tilde D'$ and \eqref{PD2},
\begin{align*}
  \Big\| 
  \sum_{T_j \in \mathbf T_j} m_{q,T_j}
    (1-P_{\tilde D'})\phi_{T_j}  \Big\|_{L^2(\tilde D)}
  &\le \Big\| 
  (1-P_{\tilde D'}) \Big(\sum_{T_j \in \mathbf T_j} m_{q,T_j}\phi_{T_j} \Big) \Big\|_{L^2(\tilde D'^-)} \\
  &\lesssim r^{-N} E \Big(\sum_{T_j \in \mathbf T_j} m_{q,T_j}\phi_{T_j} \Big)^{1/2}.
\end{align*} 
Using this we have
\[
  \bigg(\sum_{q \in \mathcal Q} \Big\|  \sum_{T_j \in \mathbf T_j}
  m_{q,T_j} (1-P_{\tilde D'})\phi_{T_j} \Big\|_{L^2(\tilde D)}^2 \bigg)^{1/2}
  \lesssim r^{-N} \bigg( \sum_{q \in \mathcal Q  } E \Big(\sum_{T_j \in \mathbf T_j} m_{q,T_j}\phi_{T_j} \Big) \bigg)^{1/2}.
\]
Thus, by Lemma \ref{tube_sum} we obtain \eqref{ErEs}.
\end{proof}

\end{document}